\newcommand\restr[2]{{
  \left.\kern-\nulldelimiterspace 
  #1 
  \vphantom{\big|} 
  \right|_{#2} 
  }}
\tikzset{
  VertexStyle/.append style = {shape=circle,draw, fill=black, minimum size=9pt, inner sep=-1pt},
  EdgeStyle/.append style = {-, thick},
  LoopStyle/.append style = {-}}
\tikzset{>={Latex[width=2.5mm,length=2.5mm]}}
\newcommand{\Z}{\mathbb{Z}}
\newcommand{\R}{\mathbb{R}}
\newcommand{\T}{\mathcal{T}}
\newcommand{\x}{\mathbf{x}}
\newcommand{\y}{\mathbf{y}}
\newcommand{\rr}{\mathfrak{r}}
\newcommand{\HH}{\mathcal{H}}
\theoremstyle{plain}
\newtheorem{theorem}{Theorem}[section]
\newtheorem{lemma}[theorem]{Lemma}
\newtheorem{proposition}[theorem]{Proposition}
\newtheorem{corollary}[theorem]{Corollary}
\theoremstyle{definition}
\newtheorem{example}[theorem]{Example}
\newtheorem{definition}[theorem]{Definition}
\theoremstyle{remark}
\newtheorem{remark}[theorem]{Remark}
\title{Tutte's dichromate for signed graphs}
\author{Andrew Goodall\thanks{Corresponding author. Charles University, Prague, Czech Republic. Email: \texttt{andrew@iuuk.mff.cuni.cz}. Supported by Project ERCCZ LL1201 Cores and Czech Science Foundation GA \v{C}R 19-21082S.} \and Bart Litjens\thanks{University of Amsterdam, Netherlands. Email: \texttt{bart\_litjens@hotmail.com}. Supported by the European Research Council under the European Union's Seventh Framework Programme (FP7/2007-2013) / ERC grant agreement n$\mbox{}^{\circ}$ 339109.} \and Guus Regts\thanks{University of Amsterdam,  Netherlands. Email: \texttt{guusregts@gmail.com}. Supported by a NWO Veni grant.}\and Llu\'is Vena\thanks{University of Amsterdam, Netherlands. Email: \texttt{lluis.vena@gmail.com}. Supported by the European Research Council under the European Union's Seventh Framework Programme (FP7/2007-2013) / ERC grant agreement n$\mbox{}^{\circ}$ 339109.}}
\begin{document}
\maketitle

\begin{abstract}
\noindent We introduce the {\em trivariate Tutte polynomial} of a signed graph as an invariant of signed graphs up to vertex switching that contains among its evaluations the number of proper colorings and the number of nowhere-zero flows. In this, it parallels the Tutte polynomial of a graph, which contains the chromatic polynomial and flow polynomial as specializations. The number of nowhere-zero tensions (for signed graphs they are not simply related to proper colorings as they are for graphs) is given in terms of evaluations of the trivariate Tutte polynomial at two distinct points. Interestingly, the bivariate dichromatic polynomial of a biased graph, shown by Zaslavsky to share many similar properties with the Tutte polynomial of a graph, does not in general yield the number of nowhere-zero flows of a signed graph. Therefore the ``dichromate" for signed graphs (our trivariate Tutte polynomial) differs from the dichromatic polynomial (the rank-size generating function).  

The trivariate Tutte polynomial of a signed graph can be extended to an invariant of ordered pairs of matroids on a common ground set -- for a signed graph, the cycle matroid of its underlying graph and its frame matroid form the relevant pair of matroids. This invariant is the canonically defined Tutte polynomial of matroid pairs on a common ground set in the sense of a recent paper of Krajewski, Moffatt and Tanasa, and was first studied by Welsh and Kayibi as a four-variable linking polynomial of a matroid pair on a common ground set. 
\end{abstract}

\paragraph{Keywords}
signed graph, Tutte polynomial, flow, tension, coloring, matroid


\section{Introduction}
Signed graphs, introduced by Harary~\cite{har53}, are graphs (loops and multiple edges allowed) in which each edge is given a positive or negative sign. A large literature has accumulated on signed graphs~\cite{zas96}. Notably, 
Zaslavsky~\cite{zas82b,zas82a,zas82,zas92} developed the theory of signed graphs with respect to colorings, orientations and matroids associated with signed graphs. 
Just as colorings, flows and orientations of a graph may be defined with reference solely to the underlying cycle matroid of the graph, so may colorings, flows and orientations of a signed graph be defined in terms of the underlying frame matroid of the signed graph. 

For a finite additive abelian group $G$,  a (nowhere-zero) $G$-flow of a graph is defined with reference to an arbitrary orientation of the graph as an assignment of (non-zero) elements of $G$ to its edges such that Kirchhoff's law is satisfied, i.e. for each vertex the sum of values on its incoming edges is equal to the sum of values on its outgoing edges.  
The Tutte polynomial of a graph includes as specializations the chromatic polynomial, counting proper vertex colorings, and the flow polynomial, counting nowhere-zero $G$-flows (the number of which is independent of the orientation of the graph needed to define a flow, and, as Tutte showed~\cite{tutte54}, depends only on $|G|$).  To a (proper) vertex coloring of a graph using at most $n$ colors corresponds, by taking the ``potential difference'' at the ends of an edge, a (nowhere-zero) $G$-tension of the graph, where $G$ is an additive abelian group or order $n$; conversely, for a connected graph there are exactly $|G|$ (proper) vertex colorings for any given (nowhere-zero) $G$-tension. (See the paragraph on tensions and colorings below for how $G$-tensions are defined.) 

Tutte~\cite{tutte54} showed how the \emph{dichromatic polynomial} of a graph (Whitney rank generating function) is equivalent to the \emph{dichromate} of the graph (specializing to the chromatic polynomial and, dually, to the flow polynomial). Zaslavsky~\cite{zas95} defines the dichromatic polynomial of a biased graph (of which a signed graph is a special case) and develops analogous properties to the dichromatic of a graph. In contrast to the case of graphs, however,  Zaslavsky's bivariate dichromatic polynomial of a signed graph does not qualify as being the dichromate of a signed graph since  it does not in general yield the number of nowhere-zero flows as an evaluation. 
In order to enumerate both flows and colorings of signed graphs the dichromatic is not sufficient: the \emph{trivariate Tutte polynomial} of this paper is needed, which contains the dichromatic polynomial as a specialization (see Section~\ref{sec:signed_tutte} below). 

Our main results are that, similarly to the Tutte polynomial of a graph, the trivariate Tutte polynomial of a signed graph  includes among its evaluations the number of (nowhere-zero) $G$-flows and the number of proper $G$-colorings for a given finite additive abelian group~$G$. (The points of evaluation depend on $|G|$ and $|2G|$, where $2G=\{x+x:x\in G\}$.)  
Furthermore, we show that the number of nowhere-zero $G$-tensions of a signed graph can be obtained from its trivariate Tutte polynomial evaluated at two different points (dependent on $|G|$ and $|2G|$). For signed graphs, the number of (nowhere-zero) $G$-tensions is not simply related to the number of (proper) $G$-colorings: there is a group homomorphism from $G$-colorings to $G$-tensions, as for graphs, but in the case of signed graphs this homomorphism is not in general surjective except when $|G|$ is odd.  




\paragraph{Flows}
Flows are defined for signed graphs in a similar way to graphs~\cite{bouchet83} via Kirchhoff's law. 
The number of nowhere-zero $G$-flows of a signed graph, however, depends on the number of elements of order $2$ in $G$ (the order of its 2-torsion subgroup). For a finite additive abelian group $G$ with 2-torsion subgroup of order $2^d$, Beck and Zaslavsky~\cite{beck06} showed that when $d=0$ (so $|G|$ is odd), the number of nowhere-zero $G$-flows of a given signed graph is a polynomial in $|G|$, given as an evaluation of the Tutte polynomial of the underlying frame matroid.  DeVos et al.~\cite{devos17}, by establishing a deletion-contraction recurrence for the number of nowhere-zero $G$-flows reducing its evaluation to single-vertex signed graphs consisting solely of loops, showed that it is a polynomial in $2^d$ and $|G|/2^d$. In particular, this number is not an evaluation of the Tutte polynomial of the underlying frame matroid (unless $|G|$ is odd or the signed graph is balanced).
 In this paper we show that the number of nowhere-zero $G$-flows is an evaluation of the trivariate Tutte polynomial, thereby establishing a subset expansion for this number (Corollary~\ref{cor:flowcount_signed}). While this paper was in preparation, this expansion was found independently and using different methods by Qian and Ren~\cite[Corollary~4]{qian18}.

\paragraph{Tensions and colorings}
 In Section~\ref{sec:colorings} we begin by reviewing Zaslavsky's enumeration of proper $n$-colorings and proper non-zero $n$-colorings of a signed graph, and show that these are evaluations of our signed graph Tutte polynomial: the number of proper $n$-colorings is an evaluation of the Tutte polynomial of the underlying frame matroid, while the number of proper non-zero $n$-colorings is not such an evaluation. 
Zaslavsky's definition of signed graph proper $n$-colorings, in which colors are elements of $\{0,\pm 1,\dots, \pm n\}$, has a natural generalization to {\em proper $(X,\iota)$-colorings}, in which colors are taken from a finite set $X$ equipped with an involution $\iota$: we enumerate these colorings, showing that they are evaluations of the trivariate Tutte polynomial at points dependent on $|X|$ and the number of elements of $X$ that are fixed by $\iota$ (Theorem~\ref{thm:gcolors}). 
When $X$ is the set of elements of a finite additive abelian group $G$, we take the involution $\iota$ to be negation: fixed points of the involution are elements of $G$ of order 1 or 2, and a proper $G$-coloring is one that for each edge assigns colors $a$ and $b$ to its endpoints so that $a\neq b$ ($a\neq -b$) when the edge is positive (negative). 

For graphs, $G$-tensions are assignments of elements of an additive abelian group $G$ to edges with the property that the cumulative sum of edge values encountered when traversing a closed walk is zero, where edge values are negated when the direction of the walk opposes the orientation of the edge. This definition may be more compactly phrased in terms of oriented circuits of the underlying graphic matroid.  For signed graphs, a similar definition applies, with an orientation of an edge consisting in orienting its two half-edges, and only closed walks traversing an even number of negative edges being considered (see Figure~\ref{fig:def_ten}). Likewise, the definition of a signed graph tension may be more compactly formulated in terms of oriented circuits of the underlying frame matroid, as is done in~\cite{chen09}.

The relation of signed graph tensions to signed graph vertex colorings (see~\cite{ zas82b,zas82a}) is not as straightforward as that of graph tensions to graph vertex colorings. 
In a similar way to graphs, each (proper) $G$-coloring of a signed graph yields a (nowhere-zero) $G$-tension, in which the value on an edge is given by taking the difference (sum) of the colors of its endpoints when the edge is positive (negative). In general, however, not every nowhere-zero $G$-tension arises from a $G$-coloring in this way: tensions that do arise from $G$-colorings will be called {\em $G$-potential differences}. (For graphs, potential differences and tensions coincide.) In Section~\ref{sec:tensions} we give an equivalent definition of $G$-potential differences that refers only to the underlying frame matroid of the signed graph, i.e. independent of coloring vertices. Given the correspondence between (nowhere-zero) $G$-potential differences and (proper) $G$-colorings, we can use the previous enumeration of (proper) $n$-colorings to deduce directly that the number of (nowhere-zero) $G$-potential differences is given by an evaluation of the trivariate Tutte polynomial.   
 In Section~\ref{sec:tensions} we also establish that while the number of $G$-tensions (edges allowed to take value $0$) is an evaluation of the trivariate Tutte polynomial, the number of nowhere-zero $G$-tensions is generally not given by an evaluation of the trivariate Tutte polynomial at a single point. For a connected signed graph, the number of nowhere-zero $G$-tensions is, however, up to a sign depending on the nullity of the underlying graph, equal to a linear combination of evaluations of the trivariate Tutte polynomial at two different points (dependent on $|G|$ and $|2G|$), where the prefactors of these two evaluations depend only on $|2G|$ and on whether the signed graph is balanced or unbalanced. 


\paragraph{Matroids}
Much as the Tutte polynomial of a graph depends only on its underlying cycle matroid, the trivariate Tutte polynomial of a signed graph depends only on its frame matroid and the cycle matroid of its underlying graph. The number of nowhere-zero flows and nowhere-zero tensions of a signed graph in general depend both on its underlying cycle matroid and on its frame matroid, and it is for this reason the Tutte polynomial of the frame matroid (equivalent to the dichromatic polynomial~\cite{zas95}) fails to give them. 
The trivariate Tutte polynomial of a signed graph extends to a polynomial invariant of arbitrary pairs of matroids on a common ground set, 
which includes the Las Vergnas polynomial~\cite{vergnas80} in the special case of matroid perspectives and is equivalent to the four-variable {\em linking polynomial} of a pair of matroids of Welsh and Kayibi~\cite{WK04}, as explained in Appendix~\ref{app:matroids}. 

Krajewski et al.~\cite{krajewski18}, using a Hopf algebra framework, unify many of the existing extensions of the Tutte polynomial of a graph to other combinatorial objects (for example, those of Las Vergnas~\cite{vergnas80}, Bollob\'as and Riordan~\cite{bollobas01,bollobas02}, Krushkal~\cite{krushkal11}, and Butler~\cite{butler12} to embedded graphs, that of Crapo to matroids~\cite{crapo69}, and that of $\Delta$-matroids by Chun et al.~\cite{CMNR16}). Dupont et al.~\cite{dupont18} extend this approach to bialgebras more generally.
For a given graded class of combinatorial objects with a notion of deletion and contraction, there is a canonically defined invariant that shares with the Tutte polynomial of a graph the property of satisfying a deletion-contraction recurrence terminating in trivial objects, being universal for deletion-contraction invariants on the class, having a subset sum expansion, and satisfying a duality formula. This canonical invariant accordingly merits the designation of being the Tutte polynomial for the class. 
The trivariate Tutte polynomial defined in this paper is in this sense the canonically defined Tutte polynomial for the class of equivalence classes of signed graphs under switching (flipping signs of edges incident with a common vertex, a loop preserving its sign under this operation). 



\paragraph{Summary of enumerative results}
Table~\ref{table:summary} below summarizes the main enumerative results of this paper given by evaluations of the trivariate Tutte polynomial, placed alongside the comparable evaluations of the Tutte polynomial for graphs. In it, $\Gamma=(V,E)$ is a graph and $\Sigma=(\Gamma,\sigma)$ is a signed graph with signature $\sigma$, $G$ is a finite additive abelian group, and $2G=\{2x:x\in G\}$. 


\begin{table}[ht]
\centering
\begin{tabular}{|@{\:\:}p{4cm}@{\:}|c|@{\:\:}p{4.5cm}@{\:}|c|}
\hline
 & Tutte polynomial   & \centering{trivariate Tutte polynomial}  &  \\  &
$T_\Gamma(X,Y)$, \eqref{eq:Tutte_subset}  &  \centering{$T_\Sigma(X,Y,Z)$, Def.\ref{defn:signedtutte}} & \\
\hline
nowhere-zero $G$-flows & $(0,1-|G|)$ & \centering{$\Big(0,1-|G|,1-\frac{|G|}{|2G|}\Big)$}        & Theorem~\ref{thm:nz_flows} \\ \hline
proper $G$-colorings	&    $(1-|G|,0)$  &   \centering{$\Big(1-|G|,0,1-\frac{1}{|2G|}\Big)$}       & Corollary \ref{thm:gcolors} \\ \hline

	nowhere-zero \newline $G$-tensions & $(1-|G|,0)$ &    \centering{$\Big(1-|G|,0,1-\frac{1}{|2G|}\Big)$} \newline \centering{and $(1-|G|,0,1)$}        & Theorem~\ref{theorem:nz-tensions} \\ \cline{1-1} \cline{3-4}
\hline
\end{tabular}

\caption{Evaluation points of the trivariate Tutte polynomial involved in enumerations of proper colorings, nowhere-zero flows and nowhere-zero tensions of signed graphs compared with the analogous enumerations for graphs. In the second column the values are those given to $(X,Y)$, and in the third column the values are those given to $(X,Y,Z)$. }\label{table:summary}
\end{table}

\paragraph{Organization}
In the opening Section~\ref{sec:matroids} we briefly introduce signed graphs and their underlying graphic and frame matroid,  singling out those properties required in the sequel.
In Section~\ref{section:ST} we introduce the trivariate Tutte polynomial of a signed graph via its spanning subgraph expansion; in Appendix~\ref{app:matroids} we explain how this invariant of signed graphs is a special case of the trivariate Tutte polynomial of a pair of matroids on a common ground set, obtained by taking the matroids to be the underlying graphic matroid and frame matroid of the signed graph. 
We end Section~\ref{section:ST} by connecting the trivariate Tutte polynomial of a signed graph to other related polynomials defined in the literature.
(In Appendix~\ref{app:dictionary} we give an overview of many of these polynomials.) In Section~\ref{sec:del_cont_recipe} we show how the trivariate Tutte polynomial behaves under deletion-contraction, and give a ``Recipe Theorem" for the polynomial for signed graphs, which is the key result for proving the enumerations in
Sections~\ref{section:flows}--\ref{sec:tensions} given as evaluations of the trivariate Tutte polynomial (see Table~\ref{table:summary} above). 


\section{Signed graphs and their matroids}\label{sec:matroids}

\subsection{Signed graphs}\label{sec:signedgraphs}

A \emph{signed graph} 
 is a pair $\Sigma=(\Gamma,\sigma)$, where $\Gamma = (V,E)$ is a finite undirected graph possibly with loops and multiple edges, called the \emph{underlying graph} of $\Sigma$, and $\sigma$ is a function $\sigma:E\to \{-1,1\}$  that associates a sign to each edge of $\Gamma$, called the {\em signature} of $\Sigma$. 
A cycle $C=(v_1,e_1,v_2,\ldots,v_k,e_k,v_{1})$ in $\Gamma$ is called \emph{balanced} in $\Sigma$ if $\prod_{i=1}^k \sigma(e_i)=1$ and \emph{unbalanced} otherwise. 
The signed graph $\Sigma=(\Gamma,\sigma)$ is itself called \emph{balanced} if each cycle of $\Gamma$ is balanced in $\Sigma$ and \emph{unbalanced} otherwise. 

For a signed graph $\Sigma = (\Gamma,\sigma)$, we define $k(\Sigma) := k(\Gamma)$, the number of connected components of the underlying graph $\Gamma$. The number of balanced and unbalanced connected components of $\Sigma$ are denoted by $k_b(\Sigma)$ and by $k_u(\Sigma)$, respectively. Thus $k(\Sigma) = k_b(\Sigma) + k_u(\Sigma)$.

\emph{Switching} at a vertex $v$ means negating the sign of every edge that is incident with $v$, while keeping the sign of each loop attached to $v$. 
We say that two signed graphs $\Sigma_1 = (\Gamma_1,\sigma_1)$ and $\Sigma_2 = (\Gamma_2,\sigma_2)$ are \emph{equivalent} if the graph $\Gamma_1$ is isomorphic to the graph $\Gamma_2$, and if, under such an isomorphism, the signature $\sigma_1$ can be obtained from $\sigma_2$ by a sequence of switchings at vertices.
The property of being balanced or unbalanced is constant on equivalence classes of signed graphs, as being balanced or unbalanced is invariant under switching at a vertex.

The \emph{deletion} of an edge $e$ in $\Sigma=(\Gamma,\sigma)$ yields the signed graph $(\Gamma\backslash e, \sigma')$, where $\sigma'$ is the restriction of $\sigma$ to $E\backslash  \{e\}$ and where $\Gamma\backslash e$ is the graph obtained from $\Gamma$ by deleting $e$ as a graph edge. This signed graph obtained from $\Sigma$ be deleting $e$ is denoted by $\Sigma\backslash e$, and the signed graph obtained by deleting all the edges in $A\subseteq E$ is denoted by $\Sigma\backslash A$. For $A\subseteq E$ we write $A^c$ for the complementary subset $E\backslash A$; the signed graph $\Sigma\backslash A^c$ is restriction of $\Sigma$ to $A$. 

The \emph{contraction} of a non-loop edge $e$ of $\Gamma$ that has positive sign in $\Sigma=(\Gamma,\sigma)$ yields the signed graph $(\Gamma/e,\sigma')$,  where $\sigma'$ is the restriction of $\sigma$ to $E\backslash  \{e\}$ and where $\Gamma/e$ is the graph obtained from $\Gamma$ by contracting $e$ as a graph edge. The signed graph obtained from $\Sigma$ by contracting $e$ is denoted by $\Sigma/e$. Note that by switching we can always ensure that the sign of a non-loop edge is positive. When $e$ is a loop with positive sign in $\Sigma$ we set $\Sigma/e=\Sigma\backslash e$. In order to define contraction of negative loops, Zaslavsky~\cite{zas82a} enlarges the definition of signed graphs to include half-arcs and free loops. In our case, as we can avoid contracting negative edges and loops, we do not define the contraction $\Sigma/A$ by an arbitrary subset of edges $A$, only for subsets of positive edges (after possible switching -- Zaslavsky~\cite{zas82} shows that the order in which the edges are contracted does not affect the outcome).

\subsection{Matroids}\label{subsection:biasedgraphs}

For further background on matroids see~\cite{oxley06}. Here we highlight what is needed in the sequel. 

A matroid may -- among many ``cryptomorphic" axiomatizations -- be defined in terms of its collection of \emph{independent sets}, by its collection of \emph{bases} (independent sets of maximum size), by its \emph{circuits} (minimal dependent sets), or by its rank function (size of a maximal independent subset).
 
A {\em coloop} of a matroid is defined by the property that it belongs to no circuit; equivalently, a coloop belongs to every basis. 
A {\em loop} of a matroid has the defining property that it belongs to no independent set of edges, and in particular to no basis. 
An element of a matroid that is neither a loop nor a coloop is~{\em ordinary}.

\subsubsection{The cycle matroid of a graph}

To a graph $\Gamma=(V,E)$ there corresponds a matroid $M(\Gamma)$ on ground set $E$ with rank function
defined for $A \subseteq E$ by
$r_M(A)=|V|-k(\Gamma\backslash A^c)$ (equal to the size of a maximal spanning forest of $\Gamma\backslash A^c$).  
For a connected graph $\Gamma$, the bases of $M(\Gamma)$ are the edge sets of spanning trees of $\Gamma$, the independent sets are the edge sets of spanning forests of $\Gamma$, and the dependent sets the edge sets of spanning subgraphs containing a cycle of $\Gamma$. The circuits (minimal dependent sets) are edge sets of spanning subgraphs minimal with respect to containing a cycle in $\Gamma$, called {\em circles} in~\cite{zas82}. 

A coloop in the matroid $M(\Gamma)$ is a bridge of $\Gamma$ (deleting a bridge increases the number of connected components by one). A loop in $M(\Gamma)$ is an edge $e=uv$ of $\Gamma$ with $u=v$.

\subsubsection{The frame matroid of a signed graph}\label{sec:frame_matroid}

The material in this section is drawn from Chapter~6.10 of~\cite{oxley06}.

A subdivision of the left-hand graph in Figure~\ref{fig:M2} is a \textit{tight handcuff}, and a subdivision of the right-hand graph is a \textit{loose handcuff}. 
A loose handcuff or a tight handcuff in $\Sigma$ is \textit{unbalanced} if both its cycles are unbalanced in $\Sigma$.
\begin{figure}[h!]
\centering
\includegraphics{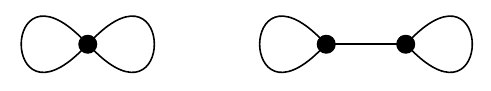}
\vspace{-4mm}
\caption{Tight handcuff and loose handcuff.}\label{fig:M2}
\end{figure}

%
%

To a signed graph $\Sigma=(\Gamma = (V,E),\sigma)$ is associated its \textit{frame matroid} $F(\Sigma)$ on ground set $E$ with rank function defined for $A \subseteq E$ by
\begin{equation}\label{equation:ranksgm}
r_F(A) = |V|-k_b(\Sigma\backslash  A^c).
\end{equation}


When $\Sigma$ is connected, the bases of $F(\Sigma)$ correspond to spanning trees of $\Gamma$ when $\Sigma$ is balanced, and in that case the two matroids coincide $F(\Sigma)=M(\Gamma)$; otherwise, when $\Sigma$ is unbalanced, a basis of $F(\Sigma)$ is the union of edges in a spanning forest $S$ of $\Gamma$ (formed by the trees $\{T_1,\ldots,T_{k(S)}\}$), and edges $\{e_i\}_{i\in[k(S)]}$, $e_i\not\in S$, such that the unique cycle of $\Gamma$ in $T_i \cup \{e_i\}$ is unbalanced in $\Sigma$, for each $i\in[k(S)]$. In particular, if $\Sigma$ is connected and balanced, each basis element contains $|V(\Gamma)|-1$ edges, whereas if $\Sigma$ is connected and unbalanced, it contains $|V(\Gamma)|$ edges.
For a not necessarily connected signed graph $\Sigma$, a basis of $F(\Sigma)$ is the union of bases of the frame matroids of the connected components of $\Sigma$.

 The circuits of $F(\Sigma)$ are the balanced cycles, unbalanced loose handcuffs and unbalanced tight handcuffs. 

A loop of $F(\Sigma)$ is a loop of $\Gamma$ with positive sign in $\Sigma$. A loop of $\Gamma$ with negative sign in $\Sigma$ is either contained in no circuit of $F(\Sigma)$ (when there are no other unbalanced cycles in the same connected component), in which case it is a coloop of $F(\Sigma)$, or it belongs to some circuit and is thereby an ordinary edge of $F(\Sigma)$. 
A coloop of $M(\Gamma)$ (a bridge of $\Gamma$) is also a coloop of $\Sigma$, except when it is a circuit path edge (an edge on the path joining the two unbalanced cycles of a loose handcuff), in which case it is ordinary in $\Sigma$.

\paragraph{Loops and coloops in signed graphs} The relationships between the notions of loop, coloop and ordinary edge for the matroids $M(\Gamma)$ and $F(\Sigma)$ are summarized in Table~\ref{fig.1}.

\begin{table}[ht!]
\centering
{\small
\begin{tabular}{|c|p{10cm}|c|c|}
	\hline
	$\Gamma$ &\centering $\Sigma$ & $M(\Gamma)$ & $F(\Sigma)$ \\
	\hline
	
	ordinary  &  edge not a loop or bridge in $\Gamma$ that when removed does not change the number of balanced connected components & ordinary & ordinary \\
	\hline
	
ordinary & edge not a loop in $\Gamma$ that belongs to every unbalanced cycle in its connected component (of which there is at least one) & ordinary & coloop \\
	\hline
	
	bridge  & bridge of $\Gamma$ in an unbalanced loose handcuff  & coloop & ordinary \\
	\hline
	bridge  & bridge of $\Gamma$ in no unbalanced loose handcuff & coloop & coloop \\
	\hline

	loop  & loop of $\Gamma$ with negative sign, other unbalanced cycles in its connected component & loop & ordinary\\
	\hline
	
	loop & loop of $\Gamma$ with negative sign, no other unbalanced cycles in its connected component & loop & coloop \\
	\hline
	
	loop  & loop of $\Gamma$ with positive sign & loop & loop \\
	\hline
\end{tabular} }
\caption{Loops, coloops and ordinary edges in a signed graph $\Sigma$ with underlying graph~$\Gamma$.} \label{fig.1}
\end{table}

\begin{figure}[ht!]
	\centering
\begin{tikzpicture}[scale=0.48,decoration={
	markings,
	mark=at position 0.5 with {\arrow{>}}}]
\tikzstyle{vertex}=[circle,fill=black!100,minimum size=7pt,inner sep=0pt]

\tikzstyle{vertex2}=[circle,fill=black!100,minimum size=6pt,inner sep=0pt]
%


\clip (-11.5,-6.5) rectangle (16, 5);	

\node[vertex2,label={\footnotesize{$v_1$}}] (v1) at (-10,2) {};
\node[vertex2,label={\footnotesize{$v_2$}}] (v2) at (-8,2) {};
\node[vertex2,label=270:{\footnotesize{$v_3$}}] (v3) at (-10,0) {};
\node[vertex2,label=270:{\footnotesize{$v_4$}}] (v4) at (-6,0) {};
\node[vertex2,label={\footnotesize{$v_5$}}] (v5) at (-4,2) {};
\node[vertex2,label={\footnotesize{$v_6$}}] (v6) at (-6,2) {};
\node[vertex2,label=270:{\footnotesize{$v_7$}}] (v7) at (-2,0) {};
\node[vertex2,label={\footnotesize{$v_8$}},label={[label distance=0.4cm]0:\footnotesize{$-$}}] (v8) at (2,2) {};
\node[vertex2,label={\footnotesize{$v_9$}},label={[label distance=0.4cm]90:\footnotesize{$+$}}] (v9) at (-2,2) {};

\draw[line width=2pt] (v1.center) -- (v3.center) node [midway,label={[label distance=-0.2cm]180:\footnotesize{$+$}}] (v13) {};

\draw[line width=2pt] (v1.center) -- (v2.center) node [midway,label={[label distance=-0.2cm]90:\footnotesize{$+$}}] (v12) {};
\draw[line width=2pt] (v2.center) -- (v3.center) node [midway,label={[label distance=-0.1cm]0:\footnotesize{$-$}}] (v23) {};
\draw[line width=2pt] (v3.center) -- (v4.center) node [midway,label={[label distance=-0.1cm]270:\footnotesize{$-$}}] (v34) {};
\draw[line width=2pt] (v4.center) -- (v6.center) node [midway,label={[label distance=-0.2cm]180:\footnotesize{$+$}}] (v46) {};
\draw[line width=2pt] (v5.center) -- (v6.center) node [midway,label={[label distance=-0.2cm]90:\footnotesize{$-$}}] (v56) {};
\draw[line width=2pt] (v4.center) -- (v5.center) node [midway,label={[label distance=-0.3cm]350:\footnotesize{$+$}}] (v45) {};
\draw[line width=2pt] (v5.center) -- (v7.center) node [midway,label={[label distance=-0.35cm]225:\footnotesize{$+$}}] (v57) {};
\draw[line width=2pt] (v7.center) -- (v8.center) node [midway,label={[label distance=-0.3cm]315:\footnotesize{$+$}}] (v78) {};
\draw[line width=2pt] (v8.center) -- (v9.center) node [midway,label={[label distance=-0.2cm]90:\footnotesize{$+$}}] (v89) {};
\draw[line width=2pt] (v7.center) -- (v9.center) node [midway,label={[label distance=-0.2cm]0:\footnotesize{$+$}}] (v79) {};

\draw[line width=2pt,dashed] (v8) to [out=45,in=315,looseness=30] (v8);

\draw[line width=2pt,dashed] (v9) to [out=135,in=45,looseness=30] (v9);

\node (tltext) [below right=0.5cm and -0.7cm of v3] [text width=6cm,align=left] {\footnotesize{$v_3v_4$ coloop/ ordinary \newline $v_5v_7$ coloop/ coloop. \newline 
$v_9v_9$ loop/ loop \newline $v_8v_8$ loop/ ordinary\newline other edges: ordinary/ ordinary\newline}};


\node[vertex2,label=270:{\footnotesize{$v_{10}$}}] (v10) at (5,0) {};
\node[vertex2,label=270:{\footnotesize{$v_{11}$}}] (v11) at (7,0) {};
\node[vertex2,label=90:{\footnotesize{$v_{12}$}}] (v12) at (7,2) {};

\draw[line width=2pt] (v10.center) -- (v11.center) node [midway,label={[label distance=-0.2cm]270:\footnotesize{$+$}}] (v1011) {};
\draw[line width=2pt] (v10.center) -- (v12.center) node [midway,label={[label distance=-0.1cm]180:\footnotesize{$-$}}] (v1012) {};
\draw[line width=2pt] (v11.center) -- (v12.center) node [midway,label={[label distance=-0.2cm]0:\footnotesize{$+$}}] (v1112) {};

\node (tltext) [below right=0.5cm and -1.5cm of v10] [text width=3.5cm,align=left] {\footnotesize{all edges: \newline ordinary/ coloop}};


\node[vertex2,label=270:{\footnotesize{$v_{13}$}}] (v13) at (11,0) {};
\node[vertex2,label=270:{\footnotesize{$v_{14}$}}] (v14) at (13,0) {};
\node[vertex2,label=90:{\footnotesize{$v_{15}$}},label={[label distance=0.4cm]90:\footnotesize{$-$}}] (v15) at (13,2) {};

\draw[line width=2pt] (v13.center) -- (v14.center) node [midway,label={[label distance=-0.2cm]270:\footnotesize{$+$}}] (v1314) {};
\draw[line width=2pt] (v13.center) -- (v15.center) node [midway,label={[label distance=-0.1cm]180:\footnotesize{$+$}}] (v1315) {};
\draw[line width=2pt] (v14.center) -- (v15.center) node [midway,label={[label distance=-0.2cm]0:\footnotesize{$+$}}] (v1415) {};
\draw[line width=2pt,dashed] (v15) to [out=135, in=45,looseness=30] (v15);

\node (tltext) [below right=0.5cm and -0.8cm of v13] [text width=3.5cm,align=left] {\footnotesize{$v_{15}v_{15}$: loop/ coloop}};

\end{tikzpicture}
	
	\caption{Signed graph edges of the seven types described in Table~\ref{table:summary} (edge type in $M(\Gamma)$/ edge type in $F(\Sigma)$).} \label{fig:M3}
\end{figure}
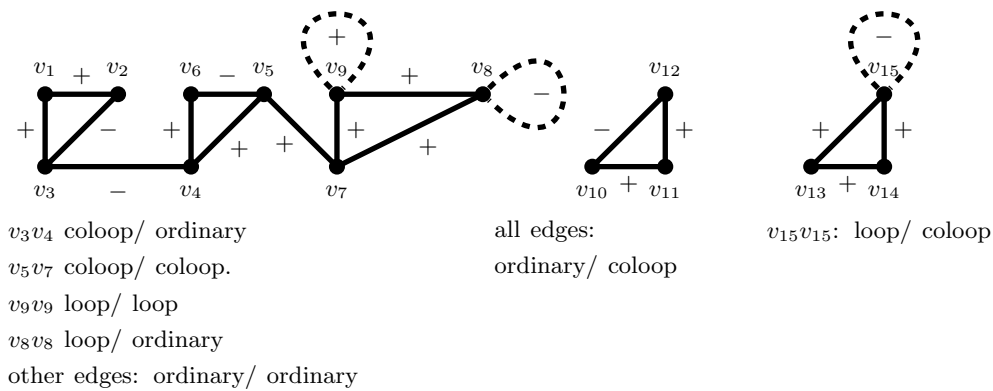




\section{Tutte polynomials for matroid pairs and for signed graphs}\label{section:ST}

\subsection{The Tutte polynomial of a graph and of a matroid}\label{section:ST matroid}

The Tutte polynomial of a graph $\Gamma=(V,E)$ has subset expansion
\begin{equation}\label{eq:Tutte_subset}
T_{\Gamma}(X,Y)= \sum_{A \subseteq E}(X-1)^{k(\Gamma\backslash A^c)-k(\Gamma)}(Y-1)^{|A|-|V|+k(\Gamma\backslash A^c)},\end{equation}
and may alternatively be defined by the recurrence
\begin{equation}\label{eq:Tutte_del_con}T_{\Gamma}(X,Y)=\begin{cases} T_{\Gamma/e}(X,Y)+T_{\Gamma\backslash e}(X,Y) & \mbox{if $e$ is an ordinary edge of $\Gamma$,}\\
X T_{\Gamma/e}(X,Y) & \mbox{if $e$ is a bridge of $\Gamma$,}\\
Y T_{\Gamma\backslash e}(X,Y) & \mbox{if $e$ is loop of $\Gamma$,}\end{cases}\end{equation}
and $T_\Gamma(X,Y)=1$ if $\Gamma$ has no edges. 
Among its many evaluations with combinatorial interpretations are the following, in which $\mathbb Z_n$ denotes the additive cyclic group on $n$ elements:
\begin{itemize}
\item $(-1)^{|V|-k(\Gamma)}n^{k(\Gamma)}T_{\Gamma}(1-n,0)$ is the number of proper vertex colorings of $\Gamma$ with $n$ colors,
\item  $(-1)^{|V|-k(\Gamma)}T_{\Gamma}(1-n,0)$ is the number of nowhere-zero $\mathbb Z_n$-tensions of $\Gamma$ (in one-to-$n^{k(\Gamma)}$ correspondence with proper $n$-colorings),
\item $(-1)^{|E|-|V|+k(\Gamma)}T_{\Gamma}(0,1-n)$ is the number of nowhere-zero $\mathbb Z_n$-flows of $\Gamma$.
\end{itemize}

The Tutte polynomial of a matroid $M=(E,r)$ with ground set $E$ and rank function $r$ is defined by
\begin{equation}
\label{eq:tutte_matroid}
T_{M}(X,Y)= \sum_{A \subseteq E}(X-1)^{r(E)-r(A)}(Y-1)^{|A|-r(A)}.
\end{equation}

When $M=M(\Gamma)=(E,r_M)$ is the cycle matroid of $\Gamma=(V,E)$ we have 
$r_M(E)-r_M(A)=k(\Gamma\backslash A^c)-k(\Gamma)$ and $|A|-r_M(A)=|A|-|V|+k(\Gamma\backslash A^c)$, so this subset expansion coincides with that given in~\eqref{eq:Tutte_subset} to define the Tutte polynomial of a graph. 
The Tutte polynomial of a matroid satisifies mutatis mutandis the same deletion-contraction recurrence given in~\eqref{eq:Tutte_del_con} for the Tutte polynomial of a graph.

\subsection{The trivariate Tutte polynomial of a signed graph} \label{sec:signed_tutte}

\begin{definition}\label{defn:signedtutte}
The \textit{trivariate Tutte polynomial} of a signed graph $\Sigma=(\Gamma,\sigma)$ with underlying graph $\Gamma=(V,E)$ is defined by
\begin{equation}\label{eq:signed_Tutte_subset}
T_{\Sigma}(X,Y,Z) := \sum_{A \subseteq E}(X-1)^{k(\Sigma\backslash A^c)-k(\Sigma)}(Y-1)^{|A|-|V|+k_{b}(\Sigma\backslash A^c)}(Z-1)^{k_{u}(\Sigma\backslash A^c)}.
\end{equation}
\end{definition}

It is easy to verify directly that $T_{\Sigma_1}=T_{\Sigma_2}$ when $\Sigma_1$ is switching equivalent to $\Sigma_2$, and that $T_{\Sigma_1\sqcup \Sigma_2}=T_{\Sigma_1}T_{\Sigma_2}$. 

The Tutte polynomial of a signed graph can, just as the Tutte polynomial of a graph extends to matroids, be extended to pairs of matroids on a common ground set, as explained in Appendix~\ref{app:matroids}. 
Given a signed graph $\Sigma=(\Gamma = (V,E),\sigma)$, let $M(\Gamma)=(E,r_M)$ be the cycle matroid of $\Gamma$ with rank function $r_M$, and let $F(\Sigma)=(E,r_F)$ be the frame matroid of $\Sigma$ with rank function $r_F$.
Since $r_F(A)-r_M(A)=k_u(\Sigma\backslash A^c)\geq 0$ for each $A\subseteq E$, the polynomial $S_{M(\Gamma), F(\Sigma)}$ of Definition~\ref{definition:extendedstp} is divisible by $(Z-1)^{r_M(E)}$, as can be seen by inspecting its subset expansion~\eqref{equation:sm1m2_2}.
We have then for signed graph $\Sigma$ with underlying graph $\Gamma=(V,E)$, 
\begin{equation}\label{def_tutte_from_mat}
T_{\Sigma}(X,Y,Z) = (Z-1)^{-r_M(E)}S_{M(\Gamma), F(\Sigma)}(X,Y,Z).
\end{equation}
%


Identities~\eqref{eq:Tutte_M1} and~\eqref{eq:Tutte_M2} in Appendix~\ref{app:matroids} then imply that the trivariate Tutte polynomial \eqref{eq:signed_Tutte_subset} contains as a specialization combinatorial invariants of a signed graph $\Sigma=(\Gamma,\sigma)$ that can be obtained as a specialization of the Tutte polynomial of the underlying frame matroid of $\Sigma$ or the Tutte polynomial of the cycle matroid of $\Gamma$. For example, the number of proper colorings of $\Sigma$ is an evaluation of the Tutte polynomial of $F(\Sigma)$, expressed as an evaluation of the trivariate Tutte polynomial of $\Sigma$ in Corollary~\ref{corollary:ncolors} below. More generally, the generating function for colorings of $\Sigma$ according to the number of improperly colored edges or, equivalently, the dichromatic polynomial $Q_\Sigma(u,v)$ defined by Zaslavsky~\cite[Section 3]{zas95}, is given by 
\begin{equation}\label{eq:Q} Q_\Sigma(u,v)=u^{k(\Sigma)}T_\Sigma\Big(u+1,v+1,\frac{1}{u}+1\Big).\end{equation}
(In a similar way to how the dichromatic polynomial of a graph $\Gamma$ is up to a prefactor equivalent to the Tutte polynomial of $M(\Gamma)$, the dichromatic polynomial of $\Sigma$ is up to a prefactor the Tutte polynomial of $F(\Sigma)$: equation~\eqref{eq:Q} is equation~\eqref{eq:Tutte_M2} in disguise. Zaslavsky's dichromatic polynomial is defined more widely for biased graphs, of which signed graphs form a subclass.)
By contrast, the number of nowhere-zero flows of $\Sigma$ in general depends on both $M(\Gamma)$ and $F(\Sigma)$ and is not given by an evaluation of the Tutte polynomial of either matroid: to enumerate nowhere-zero flows we require the trivariate Tutte polynomial (see Theorem~\ref{thm:nz_flows} below).  

If $\Sigma$ is balanced, the polynomial $T_{\Sigma}$ coincides with the Tutte polynomial of the underlying graph $\Gamma$ (as being balanced is hereditary, and so $k_u(\Sigma\backslash  A^c)\equiv0$).

\paragraph{Related work on Tutte polynomials for signed graphs}
The Tutte polynomial for signed graphs that we have just defined can be seen as a special case of the huge Tutte polynomial of weighted gain graphs of Forge and Zaslavsky~\cite{FZ}, taking all weights equal to $1$ and the gain group equal to $\{-1,1\}$.

In~\cite{kauf89} Kauffman defines a trivariate polynomial $Q(A,B,d)$ for signed graphs that for balanced signed graphs also reduces to the Tutte polynomial of the underlying graph. Godsil and Royle~\cite[Chapter 15]{GR01} define a signed rank polynomial for matroids on a signed ground set, specializing to the Kaufmann bracket for links. The polynomial of Definition~\ref{defn:signedtutte} differs from the polynomial of Kauffman since it is invariant under switchings, while the polynomial $Q(A,B,d)$ generally is not. For instance, for the graph $K_2$ in which the edge carries a positive sign the polynomial of Kauffman equals $A+Bd$, while for $K_2$ with a negative sign on the edge it equals $Ad+B$. These and other signed graph polynomials described in~\cite[Section 3.2]{Chm} are specializations of a signed version of the Bollob\'as-Riordan polynomial~\cite{bollobas99}, which is not invariant under switching.

The trivariate Tutte polynomial may be obtained as a specialization of the ``surface Tutte polynomial" $\T(M)$ of a map $M$ (graph embedded in a compact surface), introduced in~\cite{goodall2020tutte}, and whose expression can found in Appendix~\ref{app:dictionary}, 
 Table~\ref{tab.1}.  
An embedding of a graph as a map is commonly represented as a ribbon graph (see e.g.~\cite{EMM13}), edges being bands whose two ends are glued along the boundaries of disks representing vertices. With this representation, a sign can be associated with each of the edges of the graph with respect to this embedding, in which an edge receives positive sign when it is untwisted and negative sign if it is twisted.
The surface Tutte polynomial $\T(M)$ of a map $M$ contains $T_{\Sigma}$ as a specialization as follows.
For an arbitrary embedding of the signed graph $\Sigma$ into a compact surface (non-orientable precisely when $\Sigma$ is unbalanced, cf.~\cite{mohar01}) as a map $M$,
\begin{equation}\label{eq.signed_eval_surf}
T_{\Sigma}(X,Y,Z)=(X-1)^{-k(M)}\T(M;\x,\y),
\end{equation}
in which $\x$ and $\y$ are set equal to the following values: $x=1$, $y=Y-1$, $x_g=1$ for all $g\in \Z$, $y_g=X-1$ if $g\geq 0$ and $y_g=(X-1)(Z-1)/(Y-1)$ if $g\leq -1$.


 A variant of the polynomial $T_{\Sigma}$, in which the exponent $k(\Sigma\backslash A^c)-k(\Sigma)$ of $X-1$ for an $A \subseteq E$ in the subgraph expansion is replaced by $k_b(\Sigma\backslash A^c)-k_b(\Sigma)$, appears in the slides of a presentation by Krieger and O'Connor in 2013~\cite{kriocon13}. They show that a suitable renormalization of this polynomial equals the Euler characteristic of a chain complex of trigraded modules. This builds upon earlier work on the categorification of, in chronological order, the Jones polynomial by Khovanov~\cite{khovanov00}, the chromatic polynomial by Helme-Guizon and Rong~\cite{helmeguizon05} and the Tutte polynomial (for graphs) by Jasso-Hernandez and Rong~\cite{jassohernandez06}.

\section{Deletion-contraction invariants} \label{sec:del_cont_recipe}

To define contraction of negative edges in a signed graph requires enlarging the domain of signed graphs by allowing half-arcs and free loops~\cite{zas82}. To avoid doing this, in giving a recurrence for the trivariate Tutte polynomial we only allow contraction of positive edges.  
As a non-loop can always be made positive by vertex switching, this gives a recurrence terminating in signed graphs consisting solely of bouquets of negative loops.

\begin{theorem}\label{thm:S_del_con}
The trivariate Tutte polynomial $T_{\Sigma}=T_{\Sigma}(X,Y,Z)$ of a signed graph $\Sigma=(\Gamma,\sigma)$ with underlying graph $\Gamma=(V,E)$ satisfies, for a positive edge $e$,

\begin{numcases}
{T_{\Sigma}=} T_{\Sigma/e}+T_{\Sigma\backslash e} & \mbox{if $e$ is an ordinary edge of $\Gamma$,}\label{case.4}\\
T_{\Sigma/e}+(X\!-\!1)T_{\Sigma\backslash e} & \mbox{if $e$ is a bridge of $\Gamma$ and circuit path edge of $\Sigma$,}\label{case.2}\\
XT_{\Sigma/e} &
 \mbox{if $e$ is a bridge of $\Gamma$ not a circuit path edge of $\Sigma$,} \label{case.1}\\
YT_{\Sigma\backslash e} & \mbox{if $e$ is a loop of $\Gamma$ positive in $\Sigma$,}\label{case.3}
\end{numcases}

and if $\Sigma$ is the signed graph consisting of $\ell\geq 1$ negative loops on a single vertex then 
$$T_{\Sigma}=1+(Z-1)\big[1+Y+\cdots + Y^{\ell-1}\big],$$
and $T_{\Sigma}=1$ if $\Sigma$ has no edges. 
\end{theorem}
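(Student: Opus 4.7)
The natural approach is to split the subset-expansion
$$T_{\Sigma}(X,Y,Z) = \sum_{A \subseteq E}(X-1)^{k(\Sigma\backslash A^c)-k(\Sigma)}(Y-1)^{|A|-|V|+k_{b}(\Sigma\backslash A^c)}(Z-1)^{k_{u}(\Sigma\backslash A^c)}$$
according to whether $e \in A$ or $e \notin A$. Writing $H = \Sigma\backslash A^c$, the sum over $A \not\ni e$ indexes spanning subgraphs of $\Sigma \setminus e$ with the same values of $k_b(H), k_u(H), k(H)$; the only discrepancy from $T_{\Sigma\setminus e}$ is that the exponent of $X-1$ uses $k(\Sigma)$ rather than $k(\Sigma\setminus e)$. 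Since $k(\Sigma\setminus e)=k(\Sigma)+[e\text{ is a bridge}]$, this contributes either $T_{\Sigma\setminus e}$ (ordinary or loop edge) or $(X-1)T_{\Sigma\setminus e}$ (bridge). For $A \ni e$ with $e$ a positive non-loop, contracting $e$ in $H$ yields $H' = (\Sigma/e)\backslash(A\setminus\{e\})^c$ with one fewer vertex, and since $e$ is positive and non-loop the identification of its endpoints preserves balance of each cycle; hence $k(H')=k(H)$, $k_b(H')=k_b(H)$, $k_u(H')=k_u(H)$. Using $|A|=|A\setminus\{e\}|+1$ and $|V(\Sigma/e)|=|V|-1$, the exponent of $Y-1$ is unchanged, and the contribution is exactly $T_{\Sigma/e}$.

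Combining the two pieces immediately yields case \eqref{case.4} when $e$ is ordinary and case \eqref{case.2} when $e$ is a bridge. For case \eqref{case.1}, I need the extra identity $T_{\Sigma/e}=T_{\Sigma\setminus e}$ when $e$ is a bridge that is \emph{not} a circuit path edge. This is the main obstacle. The structural input is that \emph{no} unbalanced loose handcuff contains $e$, which forces at least one of the two components $C_1,C_2$ obtained by deleting the bridge $e$ to be balanced (otherwise an unbalanced cycle from each side would together with $e$ form a loose unbalanced handcuff). For any $A\subseteq E\setminus\{e\}$, comparing $H=(\Sigma\setminus e)\backslash A^c$ and $H'=(\Sigma/e)\backslash A^c$: only the two components $D_1\ni v_1, D_2\ni v_2$ of $H$ (on the two sides of the bridge) get merged into a single component $D$ of $H'$. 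Since at least one of $D_1,D_2$ is balanced (inherited from the balanced side), a case analysis shows $k_b(H)=k_b(H')+1$ and $k_u(H)=k_u(H')$, which together with $k(H)=k(H')+1$ and $|V(H')|=|V|-1$ makes every summand of $T_{\Sigma\setminus e}$ equal to the corresponding summand of $T_{\Sigma/e}$.

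For case \eqref{case.3}, $e$ is a positive loop so $\Sigma/e=\Sigma\setminus e$ by convention; removing a positive loop does not change $k,k_b,k_u$ for any spanning subgraph (the loop is a balanced cycle of length $1$), so the $A\ni e$ contribution is $(Y-1)T_{\Sigma\setminus e}$ (only the exponent of $Y-1$ jumps by $1$), giving $T_{\Sigma}=T_{\Sigma\setminus e}+(Y-1)T_{\Sigma\setminus e}=YT_{\Sigma\setminus e}$. Finally, for the base case of $\ell$ negative loops on a single vertex, every subset $A\subseteq E$ gives $k=1$, the empty subset being balanced and every nonempty subset being unbalanced. Direct computation gives
$$T_{\Sigma} = 1 + \sum_{j=1}^\ell \binom{\ell}{j}(Y-1)^{j-1}(Z-1) = 1 + \frac{Z-1}{Y-1}\bigl(Y^\ell-1\bigr) = 1+(Z-1)\bigl[1+Y+\cdots+Y^{\ell-1}\bigr]$$
by the binomial theorem, which gives the stated value. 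The edgeless case is immediate from the single summand $A=\emptyset$.
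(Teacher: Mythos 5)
Your proposal is correct and follows essentially the same route as the paper's proof: split the subset expansion according to whether $e\in A$, track how $k$, $k_b$, $k_u$ and the exponents change under deletion and contraction of a positive edge, derive case~\eqref{case.1} from case~\eqref{case.2} by showing $T_{\Sigma/e}=T_{\Sigma\backslash e}$ termwise (using that a bridge not in an unbalanced loose handcuff has a balanced side, so exactly one balanced component is lost upon contraction), and compute the bouquet base case directly. No gaps.
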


Theorem~\ref{thm:S_del_con} can be obtained as a corollary of the framework of~\cite{krajewski18} applied to the polynomial for matroid pairs, as deletion and contraction of positive edges (and deletion of negative edges) in $\Sigma=(\Gamma,\sigma)$ is compatible with deletion and contraction in the matroids $M(\Gamma)$ and $F(\Sigma)$ as usually defined (see \cite{oxley06}).  However, we give an independent proof of the recurrence formula for $T_{\Sigma}(X,Y,Z)$ as it is key to many of our results.

\begin{proof}[Proof of Theorem~\ref{thm:S_del_con}]
Given a signed graph $\Sigma=(\Gamma,\sigma)$ with underlying graph $\Gamma=(V,E)$ and an edge $e\in E$ with $\sigma(e)=+1$, we split the subset expansion of $T_{\Sigma}$ into two parts
\begin{align}\label{bridge1}
T_{\Sigma} = T_{\Sigma}' + T_{\Sigma}'',
\end{align}
according to whether a subset $A \subseteq E$ contains $e$, in which case the corresponding term is contained in $T_{\Sigma}'$, or does not contain $e$, in which case the corresponding term is contained in $T_{\Sigma}''$. Let $\Sigma/e$ have underlying graph $(V',E\backslash\{e\})$ and $\Sigma\backslash e$ underlying graph $(V,E\backslash  \{e\})$. We have a bijection $\{A \subseteq E\backslash \{e\}\} \rightarrow \{A \subseteq E : e \in A\}$ defined by $A \mapsto A \cup \{e\}$. 

For $A\subseteq E\backslash\{e\}$, 
\begin{equation}\label{3}
k_u((\Sigma/e)\backslash A^c) = k_u(\Sigma\backslash (A\cup\{e\})^c)\: \text{ and }\: k_b((\Sigma/e)\backslash A^c) = k_b(\Sigma\backslash (A\cup\{e\})^c),
\end{equation}
and 
\begin{equation}\label{6}
k_b((\Sigma\backslash e)\backslash A^c) =k_b(\Sigma\backslash A^c)\: \text{ and }\: k_u((\Sigma\backslash e)\backslash A^c) = k_u(\Sigma\backslash A^c).
\end{equation}
Thus
\begin{equation}
k((\Sigma/e)\backslash A^c)=k(\Sigma\backslash (A\cup\{e\})^c)\: \text{ and }\:
k((\Sigma\backslash e)\backslash A^c) =k(\Sigma\backslash A^c)
\end{equation}
and
\begin{equation}
k((\Sigma/e)\backslash A^c) - k(\Sigma/e) = k(\Sigma\backslash (A\cup\{e\})^c) - k(\Sigma),  \label{1}
\end{equation}
as $k(\Sigma/e)=k(\Sigma)$. Moreover, 
\begin{equation}
|A|-|V'| = \begin{cases} (|A\cup\{e\}|-1)-(|V|-1) = |A\cup\{e\}|-|V| \hspace{1.9mm} \text{ if }e \text{ is not a loop in $\Gamma$,}\\ |A\cup\{e\}|-|V|-1 \hspace{45mm} \text{ if }e \text{ is a loop in $\Gamma$,}\end{cases}\label{2}
\end{equation} 
where we have used that $\Gamma/e = \Gamma\backslash e$ when $e$ is a loop in $\Gamma$.

Also, \begin{align}
&k((\Sigma\backslash e)\backslash A^c) - k(\Sigma\backslash e) = \begin{cases} k(\Sigma\backslash A^c) - k(\Sigma) \hspace{10.6mm} \text{if }e \text{ is not a bridge in $\Gamma$,}\\k(\Sigma\backslash A^c) - k(\Sigma)-1 \hspace{4mm} \text{if }e \text{ is a bridge in $\Gamma$.}\end{cases}\label{4}
\end{align}

After these preparations, we may now prove each case of the recurrence, starting with
the case of ordinary edges~\eqref{case.4}. Take an edge $e$ not a loop or bridge such that $\sigma(e) = +1$. Then $T_{\Sigma}' = T_{\Sigma/e}$ by equations~\eqref{1}, \eqref{2} and~\eqref{3}. Equations~\eqref{4} and~\eqref{6} imply that $T_{\Sigma}'' = T_{\Sigma\backslash e}$, so that $T_{\Sigma} = T_{\Sigma/e}+ T_{\Sigma\backslash e}$. 

For case~\eqref{case.2}, if $e$ is a bridge of $\Gamma$ then $T_{\Sigma}' = T_{\Sigma/e}$ by equations $(\ref{1}), (\ref{2})$ and $(\ref{3})$, and $T_{\Sigma}'' = (X-1)\,T_{\Sigma\backslash e}$ by equations $(\ref{4})$ and $(\ref{6})$. Thus~\eqref{bridge1} in this case becomes $T_{\Sigma} = T_{\Sigma/e}+(X-1)\,T_{\Sigma\backslash e}$. 

Case \eqref{case.1} follows from~\eqref{case.2}, for if $e$ is a bridge but not a circuit path edge of $\Sigma$ then $T_{\Sigma/e}=T_{\Sigma\backslash e}$. Indeed, for $A\subseteq E\backslash \{e\}$, 
$k((\Sigma\backslash e) \backslash  A^c)=k((\Sigma/ e) \backslash A^c)+1$ (since $e$ is a bridge of $\Gamma$) 
and $|V|=|V'|+1$. Also,  for every $A\subseteq E\backslash\{e\}$, $k_b((\Sigma\backslash e) \backslash  A^c)=k_b((\Sigma/ e) \backslash A^c)+1$ since in $\Sigma\backslash e$ at least one endpoint of $e$ is contained in a balanced connected component of $\Sigma\backslash A^c$ (as $e$ is not a circuit path edge of $\Sigma$). 
This means that in $(\Sigma/ e)\backslash A^c$ there is one less balanced component than in $(\Sigma\backslash e)\backslash A^c$. 
The same reasoning establishes the equality $k_u((\Sigma\backslash e)\backslash A^c)=k_u((\Sigma/ e) \backslash A^c)$.

For case~\eqref{case.3}, assume now that $e$ is a loop with $\sigma(e) = +1$. Then $T_{\Sigma}' = (Y-1)\,T_{\Sigma/e}$ by equations~\eqref{1}, \eqref{2} and~\eqref{3}, and $T_{\Sigma}'' = T_{\Sigma\backslash e}$ by equations~\eqref{4} and~\eqref{6}. As $\Sigma\backslash  e=\Sigma/e$ for a positive loop $e$, we have $T_{\Sigma/e} = T_{\Sigma\backslash e}$ and subsituting into~\eqref{bridge1} yields $T_\Sigma=YT_{\Sigma\backslash e}$.

Finally, assume that $\Sigma$ is the one-vertex signed graph with $\ell \geq 1$ negative loops. The contribution of $A = \emptyset$ to $T_{\Sigma}$ is $1$. Let $A \subseteq E$ be a subset of size $i > 0$. Then 
\begin{displaymath}
(X-1)^{k(\Sigma\backslash A^c)-k(\Sigma)}(Y-1)^{|A|-|V|+k_b(\Sigma\backslash A^c)}(Z-1)^{k_u(\Sigma\backslash  A^c)} = (Y-1)^{i-1}(Z-1).
\end{displaymath}
Hence,
\begin{displaymath}
T_{\Sigma} = 1 + (Z-1)\cdot\sum_{i=1}^{\ell}\binom{\ell}{i}(Y-1)^{i-1} = 1 + (Z-1)\,(1+Y+ \ldots + Y^{\ell-1}).
\end{displaymath}
\end{proof}

A balanced signed graph $\Sigma$, switching equivalent to a signed graph with all edges positive, can be identified with its underlying graph $\Gamma$. In this case, the recurrence for $T_\Sigma$ in Theorem~\ref{thm:S_del_con} reduces to that of the Tutte polynomial of $\Gamma$ as there are no circuit path edges or negative loops. 

The Tutte polynomial $T_\Gamma(X,Y)$ of a graph as a polynomial in $X$ and $Y$ has non-negative coefficients (evident from its deletion-contraction recurrence, but not so evident from its subset expansion, which involves terms of the form $(X-1)^r(Y-1)^s$); furthermore, for a connected graph $\Gamma$, the coefficient of $X^iY^j$ in $T_\Gamma(X,Y)$ has a combinatorial interpretation as the number of spanning trees of internal activity $i$ and external activity $j$. 
The recurrence of Theorem~\ref{thm:S_del_con} indicates that 
as a polynomial in $X,Y$ and $Z$ the trivariate Tutte polynomial in general has negative coefficients, as confirmed by the following example.

\begin{example}\label{example:bridge}
Let $\Sigma$ be the signed graph on two vertices having a unique (positive) bridge $e$ and two negative loops, one at each vertex (see Figure~\ref{fig:M1}).
\begin{figure}[h!]
\centering
\begin{tikzpicture}[scale=0.8]
\SetGraphUnit{1.4}
\Vertex{a}
\EA(a){b}
\draw[style={-,thick,color=black}] (a)--(b)node[pos=0.5,anchor=south]{$e$};
\draw[style={-,thick,color=black}] (a) edge[in=135,out=225,loop,min distance=20mm] node[above,pos=0.37] {\vspace{10mm}\hspace{-9mm}$-$} (a);
\draw[style={-,thick,color=black}] (b) edge[in=315,out=45,loop,min distance=20mm] node[above,pos=0.63] {\vspace{5mm}\hspace{9mm}$-$} (b);
\end{tikzpicture}\caption{Unbalanced loose handcuff $\Sigma$} \label{fig:M1}
\end{figure}
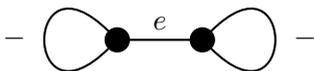
Then, taking $\ell=2$ in Theorem~\ref{thm:S_del_con}, we have $T_{\Sigma/e} = YZ+Z-Y$, while $T_{\Sigma\backslash e} = Z^2$ (by multiplicativity over disjoint unions and taking $\ell=1$ in Theorem~\ref{thm:S_del_con}). 
By the recurrence~\eqref{case.2} we have $T_\Sigma=(Y-Z)(Z-1)+XZ^2$. \end{example}

\subsection{The Recipe Theorem}
The Tutte polynomial is universal for deletion-contraction graph invariants in the sense that if $U$ is a graph invariant multiplicative over disjoint unions and satisfying 
$$U_{\Gamma}=\begin{cases} \alpha\, U_{\Gamma/e}+\beta U_{\Gamma\backslash e} & \mbox{if $e$ is an ordinary edge of $\Gamma$,}\\
x U_{\Gamma/e} & \mbox{if $e$ is a bridge of $\Gamma$,}
\end{cases}$$
and  $U_\Gamma=\gamma y^\ell$ if $\Gamma$ is a bouquet of $\ell\geq 0$ loops, 
then
$$U_\Gamma=\alpha^{r(\Gamma)}\beta^{|E|-r(\Gamma)}\gamma^{k(\Gamma)}T_\Gamma\left(\frac{x}{\alpha},\frac{y}{\beta}\right).$$
(See e.g.~\cite{bryl92}.) Despite the divisions by $\alpha$ and $\beta$, the formula for $U_\Gamma$ also holds for $\alpha=0$ and $\beta=0$ as the right-hand side upon expanding by the subset expansion for the Tutte polynomial is a polynomial in $x,y,\alpha$ and $\beta$.

 A similar ``Recipe Theorem" holds for the trivariate Tutte polynomial, which we shall apply to obtain combinatorial interpretations of its evaluations at various points in Sections~\ref{section:flows}--\ref{sec:tensions}. 

\begin{theorem}[Recipe Theorem]\label{theorem:recipe2}
	Let $R$ be an invariant of signed graphs invariant under switching and multiplicative over disjoint unions. 
	Suppose that there are constants $\alpha,\beta,\gamma,x,y$ and $z$, with $\gamma\neq 0$,  such that, for a signed graph $\Sigma=(\Gamma,\sigma)$ with underlying graph $\Gamma=(V,E)$ and positive edge $e\in E$,
$$
R_{\Sigma}=\begin{cases} 
\alpha\, R_{\Sigma/e}+\beta R_{\Sigma\backslash e} & \mbox{if $e$ is ordinary in $\Gamma$ and in $\Sigma$,}\\
\alpha\, R_{\Sigma/e}+\gamma R_{\Sigma\backslash e} & \mbox{if $e$ is ordinary in $\Gamma$ and  $k_u(\Sigma\backslash e)<k_u(\Sigma)$,}\\
\alpha R_{\Sigma/e}+\frac{\beta(x-\alpha)}{\gamma}R_{\Sigma\backslash e} & \mbox{if $e$ is a bridge in $\Gamma$ and a circuit path edge in $\Sigma$,}\\
x R_{\Sigma/e} & \mbox{if $e$ is a bridge in $\Gamma$ that is not a circuit path edge in $\Sigma$,}\\
y\, R_{\Sigma\backslash e} & \mbox{if $e$ is a loop in $\Gamma$ and in $\Sigma$,}\\	
\end{cases}
	$$
	while if $\Sigma$ is a bouquet of $\ell\geq 1$ negative loops then 
	$$R_{\Sigma}=\beta^{\ell-1}\gamma+(z-\gamma)\sum_{i=0}^{\ell-1}y^{\ell-1-i}\beta^i,$$
	and $R_{\Sigma}=1$ when $\Sigma$ is a single vertex with no edges.

	
	Then, 
	\begin{equation}\label{eq:R} R_{\Sigma}=\alpha^{r_M(E)}\beta^{|E|-r_F(E)}\gamma^{r_F(E)-r_M(E)}T_{\Sigma}\left(\frac{x}{\alpha},\frac{y}{\beta},\frac{z}{\gamma}\right),\end{equation}
a polynomial in $\alpha,\beta,x,y$ and $z$ over $\mathbb Z[\gamma,\gamma^{-1}]$. 

If $\alpha=0$ or $\beta=0$ then we use the subset expansion of the right-hand side of~\eqref{eq:R}:
\begin{align}
R_\Sigma=\sum_{A\subseteq E}&\alpha^{r_M(A)}\beta^{|E|-|A|+r_F(A)-r_F(E)}\gamma^{r_F(E)-r_F(A)-[r_M(E)-r_M(A)]}\cdot\nonumber \\
&\qquad \cdot(x-\alpha)^{r_M(E)-r_M(A)}(y-\beta)^{|A|-r_F(A)}(z-\gamma)^{r_F(A)-r_M(A)}.
\label{eq:alphabeta0}
	\end{align}

\end{theorem}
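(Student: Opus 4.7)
The plan is to argue by induction on $|E|$. Let $\widetilde R_\Sigma$ denote the right-hand side of (4.11) when $\alpha,\beta\ne 0$, and the subset expansion (4.12) in general. A direct expansion of $T_\Sigma(x/\alpha,y/\beta,z/\gamma)$ via the subset formula (3.11), using the identities $k(\Sigma\backslash A^c)-k(\Sigma)=r_M(E)-r_M(A)$, $|A|-|V|+k_b(\Sigma\backslash A^c)=|A|-r_F(A)$, and $k_u(\Sigma\backslash A^c)=r_F(A)-r_M(A)$ (verified in the paragraph preceding (3.12)), shows that the two definitions agree and that (4.12) is indeed a polynomial in $\alpha,\beta,x,y,z$ over $\Z[\gamma,\gamma^{-1}]$, so it is well-defined even when $\alpha=0$ or $\beta=0$. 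Invariance of $\widetilde R$ under switching and multiplicativity over disjoint unions follow immediately from the corresponding properties of $T_\Sigma$ noted after Definition 3.3 and from additivity of $r_M(E)$, $r_F(E)$, $|E|$.

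The heart of the argument is to show that $\widetilde R$ satisfies the same five deletion-contraction identities as $R$. For each positive edge $e$ of each of the listed types -- ordinary in $\Gamma$ and in $\Sigma$; ordinary in $\Gamma$ but coloop in $F(\Sigma)$; bridge of $\Gamma$ and circuit path edge of $\Sigma$; bridge of $\Gamma$ not a circuit path edge; and positive loop -- Table 2.2 reads off how the exponents $r_M(E)$, $|E|-r_F(E)$ and $r_F(E)-r_M(E)$ shift under $\Sigma\mapsto\Sigma/e$ and $\Sigma\mapsto\Sigma\backslash e$. The corresponding case of Theorem 3.2, applied to $T_\Sigma(x/\alpha,y/\beta,z/\gamma)$ and multiplied through by the prefactor $\alpha^{r_M(E)}\beta^{|E|-r_F(E)}\gamma^{r_F(E)-r_M(E)}$, then rescales to exactly the claimed recurrence for $\widetilde R$. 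For example, in the circuit-path-edge case one has $r_M$ dropping by one under both deletion and contraction but $r_F$ dropping only under contraction, and the identity $T_\Sigma=T_{\Sigma/e}+(X-1)T_{\Sigma\backslash e}$ with $X=x/\alpha$ rescales to $\widetilde R_\Sigma=\alpha\,\widetilde R_{\Sigma/e}+\tfrac{\beta(x-\alpha)}{\gamma}\widetilde R_{\Sigma\backslash e}$; the otherwise puzzling coefficient $\beta(x-\alpha)/\gamma$ is forced precisely by this rank-asymmetry.

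For the base cases, the edgeless single-vertex graph has all three exponents equal to $0$ and $T_\Sigma=1$, so $\widetilde R_\Sigma=1=R_\Sigma$. For a bouquet of $\ell\ge 1$ negative loops one has $r_M(E)=0$, $r_F(E)=1$, $|E|=\ell$, and by Theorem 3.2 $T_\Sigma=1+(Z-1)\sum_{i=0}^{\ell-1}Y^i$, so
\[
\widetilde R_\Sigma=\beta^{\ell-1}\gamma\Big(1+(z/\gamma-1)\sum_{i=0}^{\ell-1}(y/\beta)^i\Big)=\beta^{\ell-1}\gamma+(z-\gamma)\sum_{i=0}^{\ell-1}y^{\ell-1-i}\beta^i,
\]
matching the assumed formula for $R_\Sigma$ on bouquets. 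To close the induction, observe that any signed graph with edges either contains a non-loop edge (which after switching may be taken positive, reducing $|E|$ via one of cases (4.1)--(4.3)), or contains a positive loop (case (4.4)), or consists of a disjoint union of one-vertex signed graphs all of whose loops are negative, which is handled by multiplicativity together with the bouquet base case.

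The main obstacle is the case-by-case exponent bookkeeping for $\alpha$, $\beta$ and $\gamma$ across the five edge types; the circuit-path-edge case is the most delicate, since it is the only one in which $r_M$ and $r_F$ change asymmetrically under deletion, and it is precisely this asymmetry that forces the non-obvious coefficient $\beta(x-\alpha)/\gamma$ in the statement. All other cases are routine rescalings of Theorem 3.2.
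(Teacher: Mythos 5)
Your proposal is correct and follows essentially the same route as the paper's own proof: one verifies that the claimed closed form satisfies the five-case recurrence and the base cases by combining the deletion--contraction recurrence for $T_\Sigma$ (Theorem~\ref{thm:S_del_con}) with the bookkeeping of how $r_M(E)$, $r_F(E)$ and $|E|$ change under deletion and contraction for each edge type, and then concludes that it must equal $R$ since the recurrence terminates in bouquets of negative loops and edgeless graphs. The only cosmetic difference is that the paper organizes the coefficient computation through the substitution $x=abcX$, $y=cY$, $z=bcZ$, $\alpha=abc$, $\beta=c$, $\gamma=bc$ applied to $a^{r_M(E)}b^{r_F(E)}c^{|E|}T_\Sigma(X,Y,Z)$, whereas you read the exponent shifts off Table~\ref{fig.1} directly and make the terminating induction on $|E|$ explicit.
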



\begin{proof}
	Consider $R_{\Sigma}=a^{r_M(E)}b^{r_F(E)}c^{|E|}T_{\Sigma}(X,Y,Z).$ By the recurrence for $T_\Sigma$, we have
	$$
	R_{\Sigma}=\begin{cases} abc\, R_{\Sigma/e}+cR_{\Sigma\backslash e} & \mbox{if $e$ is ordinary in $\Gamma$ and in $\Sigma$}\\
	abc\, R_{\Sigma/e}+bcR_{\Sigma\backslash e} & \mbox{if $e$ is ordinary in $\Gamma$ and $k_u(\Sigma\backslash e)<k_u(\Sigma)$.}\\
ac\left[bR_{\Sigma/e}+(X-1)R_{\Sigma\backslash e}\right] & \mbox{if $e$ is a bridge in $\Gamma$ and circuit path edge of $\Sigma$,}\\
abcX\, R_{\Sigma/e} & \mbox{if $e$ is a bridge in $\Gamma$ that is not a circuit path edge in $\Sigma$,}\\
		cY\, R_{\Sigma\backslash e} & \mbox{if $e$ is a positive loop in $\Sigma$,}\\
	\end{cases}
	$$
with 
	\begin{displaymath}R_{\Sigma}=b c^{\ell} \left(1+(Z-1)\frac{Y^{\ell}-1}{Y-1}\right)\end{displaymath}
	for a bouquet with $\ell\geq 1$ negative loops and  $R_{\Sigma}=1$ for a single vertex edge-less graph.
	There is an additional case in the recurrence with respect to Theorem~\ref{thm:S_del_con} for an edge $e$ that is ordinary in $\Gamma$ and  $k_u(\Sigma\backslash e)<k_u(\Sigma)$: 
 the connected component containing $e$ is unbalanced in $\Sigma$ and balanced in $\Sigma\backslash e$.
	
	
	Introducing the parameters $x=abcX, y=cY, z=bcZ, \alpha=abc, \beta = c, \gamma=bc$ yields the~result.

It is clear from the recurrence formula that $R_\Sigma$ is a polynomial in $\alpha,\beta,x,y$ and $z$ over $\mathbb Z[\gamma,\gamma^{-1}]$. In the given subset expansion~\eqref{eq:alphabeta0}, all the exponents except possibly that of $\gamma$ are nonnegative: for $A\subseteq E$ we have $r_M(A)\geq 0$, $(|E|-r_F(E))-(|A|-r_F(A))\geq 0$, $r_M(E)-r_M(A)\geq 0$, $|A|-r_F(A)\geq 0$ and $r_F(A)-r_M(A)\geq 0$, while $r_F(E)-r_F(A)-[r_M(E)-r_M(A)]=k_u(\Sigma)-k_u(\Sigma\backslash A^c)$ may be negative (for example, when $\Sigma$ is an edge with a negative loop on either endpoint and $A$ comprises the two loops). 
\end{proof}

\section{Flows}\label{section:flows}

Flows on signed graphs taking values in an abelian group are defined in a similar way to flows on graphs. Given a graph $\Gamma=(V,E)$, we call a pair $(v,e)$ with $v\in V$ and $e\in E$ an edge containing $v$ a \emph{half-edge}. (A loop comprises two half-edges.)
A \emph{bidirected graph} is a pair $(\Gamma,\omega)$, where $\Gamma=(V,E)$ is a graph (not necessarily simple) in which every half-edge $(v,e)$ receives an orientation $\omega(v,e)\in \{-1,1\}$  (The two half-edges associated with a loop at a vertex consist of the same vertex-edge pair but receive orientations independently.) 
We call the orientation $\omega$ \emph{compatible} with the signature $\sigma$ of a signed graph $\Sigma=(\Gamma,\sigma)$ if for each edge $e=uv$ we have 
\begin{equation}\label{eq.two_half_orient_cond}
\sigma(e)=-\omega(u,e)\omega(v,e).
\end{equation} 
(In particular, if the sign of a loop is negative then its two half-edges receive the same orientation sign.) A half-edge $(v,e)$ oriented positively points into~$v$, and oriented negatively points out of~$v$; for an edge $e=uv$, when $\sigma(e)=+1$ the half-edges $(u,e)$ and $(v,e)$ are consistently directed, while if $\sigma(e)=-1$ they are oppositely directed.
A vertex switch at $v$ has the effect of changing 
the orientation of all the half-edges $(v,e)$ incident with $v$; thus the new orientation is compatible with the new signature.

Let $G$ be a finite additive abelian group. Considered as a $\mathbb Z$-module, for $x\in G$ we have $(+1)x=x$, $(-1)x=-x$ and $2x=x+x$. The subgroup $2G:=\{2x: x\in G\}$ will play a significant role in the sequel; the quotient group $G/2G$ of cosets $u+2G$ is isomorphic to the subgroup $\{x\in G:2x=0\}$ (in particular, $-u+2G=u+2G$). 
 
Flows on bidirected graphs were introduced by Bouchet~\cite{bouchet83}.
A \emph{$G$-flow of a bidirected graph} $(\Gamma = (V,E),\omega)$ is a function $f:E\to G$ such that at each vertex of $\Gamma$ the Kirchhoff law is satisfied, that is, for each vertex $v$, 
\begin{equation}\label{eq:bi flow}
\sum_{\substack{(v,e)\\v\in e}} \omega(v,e)f(e)=0,
\end{equation}
where the summation runs over half-edges $(v,e)$ incident with $v$, so if $e$ is a positive loop it contributes with two terms to the sum.
A \emph{$G$-flow of a signed graph}  $\Sigma=(\Gamma,\sigma)$ is a function $f: E\to G$ such that $f$ is a $G$-flow for the bidirected graph $(\Gamma,\omega)$, where $\omega$ is an orientation of $\Gamma$ compatible with $\sigma$. 
A $G$-flow is \emph{nowhere-zero} if $f(e)\neq 0$ for all $e\in E$.
We let $q^0_\Sigma(G)$ and $q_\Sigma(G)$ denote the number of $G$-flows and number of nowhere-zero $G$-flows of $\Sigma$, respectively. 

Given $e=uv$, and by considering $-f(e)$ instead of $f(e)$ as the value of a flow at $e$, we see that the number of nowhere-zero $G$-flows does not depend on the exact values of $\omega(v,e)$ and $\omega(u,e)$ but only on the value of their product. 
Hence the notion of nowhere-zero flow on a signed graph is well-defined and the number of nowhere-zero $G$-flows is an invariant of signed graphs as it does not depend on the choosen orientation. 
Furthermore, the number of  nowhere-zero $G$-flows is constant on equivalence classes of signed graphs, as switching a vertex $v$ reverses the orientation of those half-edges incident with $v$ and just replaces the left-hand side of equation~\eqref{eq:bi flow} with its negation. 
If a signed graph is balanced, then the number of (nowhere-zero) $G$-flows only depends on the size of the group $G$~\cite{tutte49}, but if the signed graph is unbalanced, then it also depends on the group structure of $G$, as described by the following theorem.

\begin{theorem}\label{theorem:G-flows}
	The number 
	of $G$-flows of a signed graph $\Sigma=(\Gamma = (V,E),\sigma)$ is equal to $|G|^{|E|-|V|+k_b(\Sigma)}\left(\frac{|G|}{|2G|}\right)^{k_u(\Sigma)}.$
\end{theorem}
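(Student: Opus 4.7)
The plan is to realise the set of $G$-flows of $\Sigma$ as the kernel of the Kirchhoff operator $B \colon G^E \to G^V$ given by $(Bf)(v) = \sum_{(v,e)\,:\, v\in e} \omega(v,e)\, f(e)$ for a choice of compatible bidirection $\omega$, and then to compute $|\ker B|$ using the identity $|\ker B|\cdot|\mathrm{im}\, B| = |G|^{|E|}$. Since $B$ is block-diagonal across the connected components of $\Sigma$ and the claimed formula is multiplicative over components, it suffices to handle each connected component separately and multiply; within a connected component the formula asserts $|\mathrm{im}\, B| = |G|^{|V|-1}$ when $\Sigma$ is balanced and $|\mathrm{im}\, B| = |G|^{|V|-1}\,|2G|$ when $\Sigma$ is unbalanced.

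For connected balanced $\Sigma$, switching at vertices (which preserves balance and only negates individual rows of $B$, hence preserves $|\mathrm{im}\,B|$) reduces to the case that every edge has sign $+1$; compatibility then forces $\omega(u,e)\omega(v,e) = -1$ on every non-loop edge and makes every positive loop contribute the zero column, so $B$ becomes the familiar signed incidence matrix of an oriented connected graph whose image is the codimension-one subgroup $\{g\in G^V : \sum_v g(v) = 0\}$ of size $|G|^{|V|-1}$. For connected unbalanced $\Sigma$, I would pick a spanning tree $T$ and switch so that every edge of $T$ becomes positive; additivity of balance under symmetric difference of cycles then forces some non-tree edge $e^*$ to be negative, and its column in $B$ equals $\omega(u,e^*)(\mathbf{1}_u+\mathbf{1}_v)$, or $2\omega(v,e^*)\mathbf{1}_v$ in the loop case, which has coordinate sum $\pm 2$. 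Because every column of $B$ has coordinate sum in $\{-2,0,2\}\subseteq 2\Z$, one immediately gets $\mathrm{im}\, B \subseteq \{g\in G^V : \sum_v g(v)\in 2G\}$; the reverse inclusion follows by using a suitable scalar multiple of the $e^*$-column to absorb the prescribed coordinate sum in $2G$ and then correcting the residual (whose coordinate sum is now $0$) with tree columns as in the balanced case. Applying the short exact sequence $0 \to \mathrm{im}\,B \to G^V \to G/2G \to 0$ gives $|\mathrm{im}\, B| = |G|^{|V|-1}\,|2G|$.

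Combining the two cases, $|\ker B| = |G|^{|E|-|V|+1}$ for a balanced connected component and $|\ker B| = |G|^{|E|-|V|}\cdot|G|/|2G|$ for an unbalanced one, and the product over all connected components yields $|G|^{|E|-|V|+k_b(\Sigma)}(|G|/|2G|)^{k_u(\Sigma)}$ as claimed. The main obstacle is the reverse inclusion in the unbalanced case: over an arbitrary finite abelian group $G$ rather than a field one must verify that a single extra edge $e^*$ enlarges the tree's image from $\{g : \sum_v g(v) = 0\}$ to $\{g : \sum_v g(v) \in 2G\}$, which ultimately reduces to the fact that the image of multiplication-by-$2$ on $G$ is precisely $2G$. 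The remaining work is routine sign-bookkeeping to check that switching preserves compatibility of $\omega$ with $\sigma$ and that negative loops slot into the column analysis as the degenerate case $u=v$.
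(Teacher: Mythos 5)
Your proof is correct, and it takes a genuinely different route from the paper's. The paper proves this theorem by deletion--contraction: contracting all non-loop edges reduces $\Sigma$ to a disjoint union of bouquets of loops, each positive loop contributes a free factor of $|G|$, and for a bouquet of $\ell$ negative loops the count $\tfrac{|G|}{|2G|}|G|^{\ell-1}$ is obtained from the direct identity $\sum_{g_1,\dots,g_\ell}\mathbf{1}_{2g_1+\cdots+2g_\ell=0}=\tfrac{|G|}{|2G|}|G|^{\ell-1}$. You instead compute $|\ker B|$ for the Kirchhoff operator $B\colon G^E\to G^V$ via $|\ker B|\cdot|\mathrm{im}\,B|=|G|^{|E|}$, and the substance of your argument is the explicit identification of $\mathrm{im}\,B$ on a connected component as $\{g:\sum_v g(v)=0\}$ in the balanced case and $\{g:\sum_v g(v)\in 2G\}$ in the unbalanced case. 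All the steps you flag as needing care do go through: switching negates rows of $B$ (an automorphism of the codomain, so image size and kernel are preserved), compatibility forces positive non-loop columns to be $\pm(\mathbf{1}_u-\mathbf{1}_v)$, positive loops to give zero columns, and negative edges to give columns of coordinate sum $\pm2$; the spanning-tree columns generate the coordinate-sum-zero subgroup by the usual telescoping argument; and a single negative non-tree edge (which must exist when the component is unbalanced and the tree has been switched positive) enlarges the image exactly by the index-$|G/2G|$ step, since one scalar multiple of its column realizes any prescribed coordinate sum in $2G$ and the residual is absorbed by the tree. What your approach buys is structural information the paper's recurrence does not directly provide: an explicit description of the image of the boundary operator, which is the flow-side mirror of the paper's later analysis of the coloring-to-tension map $\delta$ (Theorem~\ref{theorem:kernel} and the $2G$-condition characterizing potential differences in Theorem~\ref{cor:tension_coloring}). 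What the paper's approach buys is alignment with the deletion--contraction machinery that drives the rest of the paper, in particular the Recipe Theorem and the nowhere-zero count of Theorem~\ref{thm:nz_flows}.
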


\begin{proof}
The number of $G$-flows of $\Sigma$ satisfies the recurrence 
	$$q^0_\Sigma(G)=\begin{cases} q^0_{\Sigma/e}(G) & \text{if $e$ is not a loop in $\Gamma$,}\\
	|G|q_{\Sigma\backslash e}^0(G) & \text{if $e$ is a positive loop in $\Sigma$}, \end{cases}
	$$
since for a non-loop $e$ any given $G$-flow of $\Sigma/e$ by the defining equations~\eqref{eq:bi flow} extends to a $G$-flow of $\Sigma$ by the assignment of a unique value in $G$ to $e$, 
and for a positive loop $e$ any value in $G$ assigned to $e$ contributes zero to equation~\eqref{eq:bi flow}.  
	Furthermore, $q_\Sigma^0(G)=\frac{|G|}{|2G|}\cdot |G|^{\ell-1}$ for a bouquet of $\ell\geq 1$ loops negative in $\Sigma$, as
	\begin{align*}
	\sum_{g_1,\ldots,g_\ell\in G}{\bf 1}_{2g_1+\cdots +2g_\ell=0} &= \sum_{\substack{g \in G:\\ 2g = 0}}\sum_{g_1,\ldots,g_\ell\in G}{\bf 1}_{g_1+\cdots +g_\ell=g}
	=\frac{|G|}{|2G|}\sum_{g_1,\ldots,g_\ell\in G}{\bf 1}_{g_1+\cdots +g_\ell=0}
	=\frac{|G|}{|2G|}|G|^{\ell-1}.
	\end{align*}
	
	The deletion-contraction recurrence for $q_\Sigma^0(G)$ is not of the form given in Theorem~\ref{theorem:recipe2} (which would require taking $\gamma=0$ for it to fit). 
	However, here we can argue directly. Contraction of graph non-loops in $\Gamma$ leaves a graph consisting solely of $|E|-|V|+k(\Gamma)$ graph loops (the nullity of the graph $\Gamma$, the dimension of its cycle space). Among these loops, the positive loops in $\Sigma$ each contribute $|G|$; each negative loop in $\Sigma$ also contributes $|G|$, but a scale factor of $\frac{1}{|2G|}$ is applied for each bouquet containing a negative loop -- the latter correspond to unbalanced connected components of $\Sigma$.   Hence 
	\begin{align*}q_\Sigma^0(G)&=|G|^{|E|-|V|+k(\Gamma)}\left(\frac{1}{|2G|}\right)^{k_u(\Sigma)}
	= |G|^{|E|-|V|+k_b(\Sigma)}\left(\frac{|G|}{|2G|}\right)^{k_u(\Sigma)}\end{align*}
	using $k(\Gamma)=k(\Sigma)=k_b(\Sigma)+k_u(\Sigma)$. 
\end{proof}


Theorem~\ref{theorem:G-flows} and an application of inclusion-exclusion yields 
the following subset expansion formula for the number of nowhere-zero $G$-flows.

\begin{theorem}\label{cor:flowcount_signed}
Let $G$ be a finite additive abelian group and $\Sigma=(\Gamma, \sigma)$ a signed graph with underlying graph $\Gamma=(V,E)$. 
Then the number of nowhere-zero $G$-flows of $\Sigma$ is given by
\begin{equation}\label{equation:flowcountsigned}
q_\Sigma(G) = \sum_{A \subseteq E}(-1)^{|A^c|}|G|^{|A|-|V|+k_b(\Sigma\backslash A^c)}\left(\frac{|G|}{|2G|}\right)^{k_u(\Sigma\backslash A^c)}.
\end{equation}
\end{theorem}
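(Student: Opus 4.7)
The plan is to deduce the subset expansion by inclusion--exclusion from the closed form for the number of (not necessarily nowhere-zero) $G$-flows given in Theorem~\ref{theorem:G-flows}. Fix a compatible orientation $\omega$ of $\Sigma$. For a $G$-flow $f$ of $(\Gamma,\omega)$, define its zero set $Z(f):=\{e\in E:f(e)=0\}$; then $q_\Sigma(G)$ is the number of $f$ with $Z(f)=\emptyset$.

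First I would show that, for every $B\subseteq E$, the number of $G$-flows of $\Sigma$ with $Z(f)\supseteq B$ equals $q^0_{\Sigma\backslash B}(G)$. Indeed, if $f(e)=0$ for every $e\in B$, then the term $\omega(v,e)f(e)$ disappears from Kirchhoff's law~\eqref{eq:bi flow} at every vertex $v$ incident with $e\in B$, so the restriction $f|_{E\setminus B}$ is a $G$-flow of the bidirected graph $(\Gamma\backslash B,\omega|_{E\setminus B})$, and hence of $\Sigma\backslash B$; conversely, any $G$-flow of $\Sigma\backslash B$ extends uniquely to a $G$-flow of $\Sigma$ vanishing on $B$ by setting the values on $B$ to $0$. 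This gives a bijection between the two sets.

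Next I would apply standard Möbius/inclusion--exclusion over the subset lattice: writing $N(B)$ for the number of $G$-flows with $Z(f)\supseteq B$, the number of $G$-flows with $Z(f)=\emptyset$ is
\begin{displaymath}
q_\Sigma(G)=\sum_{B\subseteq E}(-1)^{|B|}N(B)=\sum_{B\subseteq E}(-1)^{|B|}q^0_{\Sigma\backslash B}(G).
\end{displaymath}
Substituting the formula of Theorem~\ref{theorem:G-flows} for $q^0_{\Sigma\backslash B}(G)$, using $|E\setminus B|=|E|-|B|$, and reindexing by $A:=B^c=E\setminus B$ (so $|B|=|A^c|$ and $\Sigma\backslash B=\Sigma\backslash A^c$) produces exactly the claimed identity~\eqref{equation:flowcountsigned}.

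There is no serious obstacle here: the only conceptual point is the bijection in the first step (flows that are $0$ on $B$ are essentially flows of the deleted signed graph $\Sigma\backslash B$), and after that everything reduces to a routine inclusion--exclusion together with plugging in the closed form from Theorem~\ref{theorem:G-flows}. One small thing worth noting in the write-up is that nothing depends on the choice of compatible orientation $\omega$, since the notion of nowhere-zero $G$-flow is itself orientation-independent as already remarked after the definition of $G$-flow.
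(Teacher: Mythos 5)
Your proof is correct and follows exactly the route the paper indicates: it states that Theorem~\ref{cor:flowcount_signed} is obtained from Theorem~\ref{theorem:G-flows} by inclusion--exclusion, and your write-up simply supplies the (correct) details of the bijection between flows vanishing on $B$ and flows of $\Sigma\backslash B$, the M\"obius inversion, and the reindexing $A=B^c$.
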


Theorem~\ref{cor:flowcount_signed} is a special case of \cite[Theorem $4.6$]{goodall2020tutte}, namely \cite[Corollary $4.11$]{goodall2020tutte}. However, since we deal here with abelian groups, the technical difficulties involved in the proof of \cite[Theorem $4.6$]{goodall2020tutte} when general finite groups are involved can be avoided (as the proof of Theorem~\ref{theorem:G-flows} shows). Very recently, Theorem~\ref{cor:flowcount_signed} was found independently by Qian~\cite[Theorem~4.3]{qian18}.

Theorem~\ref{cor:flowcount_signed} can also be obtained as an evaluation of the trivariate Tutte polynomial by using the deletion-contraction recurrence established in~\cite{devos17} and applying Theorem~\ref{theorem:recipe2}, and then using the subset expansion of the trivariate Tutte polynomial as given in Definition~\ref{defn:signedtutte}. 

\begin{theorem}\label{thm:nz_flows}
Let $G$ be a finite additive abelian group. 
Then, for a signed graph $\Sigma=(\Gamma = (V,E), \sigma)$, the number of nowhere-zero $G$-flows of $\Sigma$ is given by
\begin{equation}\label{eq.flow-eval-3var}
q_\Sigma(G) = (-1)^{|E|-|V|+k(\Gamma)}T_{\Sigma}\left(0,1-|G|,1-\frac{|G|}{|2G|}\right).
\end{equation}

\end{theorem}
\begin{proof}
As shown in~\cite{devos17}, for a positive edge $e$ (after appropriate switching of vertices, an edge in a signed graph that is not a negative loop can be made positive),
$$ q_\Sigma(G)=
\begin{cases}

q_{\Sigma/e}(G)-q_{\Sigma\backslash e}(G) & \mbox{if $e$ is not a loop of $\Gamma$,}\\
(|G|-1)q_{\Sigma\backslash e}(G) & \mbox{if $e$ is a loop of $\Gamma$ positive in $\Sigma$.}
\end{cases}
$$
To see this note that for a positive loop $e$, any value $x\in G\backslash \{0\}$ assigned to $e$ contributes  $x-x=0$ to the sum~\eqref{eq:bi flow}, from which it follows in this case that $q_\Sigma(G)=(|G|-1)q_{\Sigma\backslash e}(G)$.
Otherwise, if $e$ is not a loop, then for a given nowhere-zero $G$-flow of $\Sigma/e$ there is by the defining equations~\eqref{eq:bi flow} for a flow a unique value we can assign to $e$ to extend this flow of $\Sigma/e$ to a flow of $\Sigma$. Those extensions that take the value $0$ on $e$ are precisely the nowhere-zero $G$-flows of $\Sigma\backslash e$. This establishes the recurrence. 

If $\Sigma$ is a bouquet of $\ell$ negative loops then, as shown in ~\cite{devos17} (simplifying by the binomial expansion the expression given in~\cite[Lemma 2.1]{devos17}),
$$q_{\Sigma}(G)=\frac{1}{|G|}\left[\frac{|G|}{|2G|}(|G|-1)^\ell+(-1)^\ell(|G|-\frac{|G|}{|2G|})\right].$$
The expression of $q_{\Sigma}(G)$ as an evaluation of $T_{\Sigma}$ now follows by taking 
$(x,y,z,\alpha,\beta,\gamma)=(0,|G|-1, \frac{|G|}{|2G|}-1,1, -1,-1)$ in Theorem~\ref{theorem:recipe2}.
\end{proof}

When $2G=G$, i.e. $G$ is of odd order, the number of nowhere-zero $G$-flows of $\Sigma$ given in Theorem~\ref{thm:nz_flows} is the evaluation $(-1)^{|E|-r_F(\Sigma)}T_{F(\Sigma)}(0,1-|G|)$ of the Tutte polynomial of the frame matroid $F(\Sigma)$. This is a consequence of the identity $$T_{F(\Sigma)}(X,Y)=(X-1)^{r(\Sigma)-r(\Gamma)}T_{\Sigma}\Big(X,Y,\frac{X}{X-1}\Big),$$ 
which follows from equation~\eqref{eq:Tutte_M2} and $T_{\Sigma}(X,Y,Z)=(Z-1)^{-r_M(E)}S_{M(\Gamma),F(\Sigma)}(X,Y,Z)$. When $G$ is of even order the number  of nowhere-zero $G$-flows of $\Sigma$ is not an evaluation of the Tutte polynomial of the frame matroid. 

\begin{example}\label{example:quasi}
For a graph $\Gamma=(V,E)$, the number of nowhere-zero $\mathbb Z_n$ flows of $\Gamma$ is equal to $(-1)^{|E|-|V|+k(\Gamma)}T_\Gamma(0,1-n)$, a polynomial in $n$. For a signed graph $\Sigma$, since $2{\mathbb Z_n} = \mathbb Z_n$ when $n$ is odd and $2{\mathbb Z_n} \cong \mathbb Z_{n/2}$ when $n$ is even, the number of nowhere-zero $\mathbb Z_n$-flows $q_\Sigma(\mathbb Z_n)$ is a quasipolynomial in $n$ of period~$2$.
\end{example}

\section{Colorings}\label{sec:colorings}

Zaslavsky~\cite{zas82} introduced a notion of signed graph colorings as follows. 

\begin{definition}\label{definition:ncoloring}
Let $n \geq 1$ be an integer. A  \textit{proper $n$-coloring} of $(\Gamma = (V,E),\sigma)$ is an assignment $f: V \rightarrow \{0,\pm1,...,\pm n\}$ such that for every edge $e = uv$ we have $f(u) \neq \sigma(e)f(v)$. A \textit{proper non-zero $n$-coloring} is a proper $n$-coloring which does not assign the value $0$ to any vertex.
\end{definition}

In~\cite{mrs16}, M{\'a}{\v{c}}ajov{\'a}, Raspaud and {\v{S}}koviera call an $n$-coloring in Zaslavsky's sense a $(2n+1)$-coloring, and call a non-zero $n$-coloring in Zaslavsky's sense a $2n$-coloring, with   
the advantage that the corresponding notion of chromatic number of a signed graph agrees with the (usual) chromatic number of a balanced signed graph (viewed as a graph). We will however use Zaslavsky's terminology.

Let $\Sigma=(\Gamma,\sigma)$ be a signed graph. For $n\geq 0$, define $\chi_{\Sigma}(2n+1)$ to be the number of proper $n$-colorings and $\chi_{\Sigma}^*(2n)$ to be the number of proper non-zero $n$-colorings of $\Sigma$. 
Zaslavsky showed that these are both polynomials in $n\in\mathbb N$,  and established subgraph expansions for them 
as follows.
\begin{theorem}[Theorem $2.4$ in~\cite{zas82a}]\label{thm:chrom_subset}
Let $\Sigma$ be a signed graph. Then 
\begin{displaymath}
\chi_{\Sigma}(t) = \sum_{A \subseteq E}(-1)^{|A|}t^{k_b(\Sigma\backslash A^c)} 
\end{displaymath}
and\begin{displaymath}
\chi_{\Sigma}^*(t) = \sum_{\substack{A \subseteq E:\\ \Sigma\backslash  A^c \:\text{\rm balanced}}}(-1)^{|A|}t^{k_b(\Sigma\backslash A^c)}.
\end{displaymath}
\end{theorem}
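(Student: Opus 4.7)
The plan is to establish both formulas by inclusion--exclusion over $E$. For $t=2n+1$, let $N_A$ denote the number of assignments $f:V\to\{0,\pm 1,\ldots,\pm n\}$ satisfying $f(u)=\sigma(e)f(v)$ for every $e=uv\in A$. Since the indicator that $f$ is a proper coloring factors as $\prod_{e=uv}\bigl(1-\mathbf{1}_{f(u)=\sigma(e)f(v)}\bigr)$, we have
$$\chi_\Sigma(t)=\sum_{A\subseteq E}(-1)^{|A|}N_A,$$
so it suffices to show $N_A=t^{k_b(\Sigma\backslash A^c)}$.

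Since the constraints in $A$ are local to each connected component of $\Sigma\backslash A^c$, $N_A$ factors as a product over such components $C$. Fix a spanning tree $T_C$ of $C$ and a root $r\in V(C)$. The tree constraints uniquely determine $f$ on $V(C)$ from $f(r)$: namely $f(v)=\varepsilon(r,v)\,f(r)$, where $\varepsilon(r,v)\in\{\pm 1\}$ is the product of signs along the unique $rv$-path in $T_C$. Any remaining edge of $C$ closes a cycle, and its constraint reduces to $f(r)=\pm f(r)$ where the sign is exactly the sign product of that cycle. If $C$ is balanced every such check is vacuous, and $f(r)$ ranges freely over $\{0,\pm 1,\ldots,\pm n\}$, contributing $t$ assignments; if $C$ is unbalanced, some cycle forces $2f(r)=0$, which in this set holds only for $f(r)=0$, contributing $1$ assignment. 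Multiplying over components gives $N_A=t^{k_b(\Sigma\backslash A^c)}$, which proves the first formula.

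For $\chi_\Sigma^*(t)$ with $t=2n$, rerun the inclusion--exclusion verbatim but restrict $f$ to $\{\pm 1,\ldots,\pm n\}$. The per-component count becomes $t$ for balanced $C$ and $0$ for unbalanced $C$, since no nonzero element of $\{\pm 1,\ldots,\pm n\}$ satisfies $2x=0$. Hence the analogous quantity $N_A^*$ equals $t^{k_b(\Sigma\backslash A^c)}$ when every component of $\Sigma\backslash A^c$ is balanced, i.e.\ when $\Sigma\backslash A^c$ itself is balanced, and $N_A^*=0$ otherwise, giving the restricted subset sum. The main subtlety to address carefully is the rigidity step in the unbalanced case: one must verify that an unbalanced cycle in $C$, together with the spanning tree, propagates $2f(v)=0$ to every $v\in V(C)$, and that $f\equiv 0$ trivially satisfies all constraints on $C$, so that $f\equiv 0$ is indeed the unique solution on $C$. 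Everything else is routine once the per-component counts are pinned down.
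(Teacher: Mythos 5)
Your proof is correct, and it is essentially the same inclusion--exclusion argument the paper uses for the generalization to $(X,\iota)$-colorings in Theorem~\ref{theorem:Xcolorings} (there the count of colorings improper on every edge of $A$ is $t^{k_u(\Sigma\backslash A^c)}|X|^{k_b(\Sigma\backslash A^c)}$, which specializes to your $N_A$ with $t=1$ for the set $\{0,\pm1,\dots,\pm n\}$ and $t=0$ for $\{\pm1,\dots,\pm n\}$); the paper itself only cites Zaslavsky for the stated theorem. Your per-component rigidity analysis via fundamental cycles of a spanning tree is sound, the only point worth making explicit being the standard fact that a component is unbalanced if and only if some fundamental cycle with respect to the chosen spanning tree is unbalanced.
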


Zaslavsky further showed that these chromatic polynomials evaluated at negative integers have interpretations similar to the chromatic polynomial  of a graph evaluated at negative integers in terms of colorings and compatible orientation~\cite[Theorem 3.5]{zas82a}. 

Theorem~\ref{thm:chrom_subset} and the subgraph expansion of the trivariate Tutte polynomial (Definition~\ref{defn:signedtutte}) immediately yield an expression for the number of proper (non-zero) $n$-colorings as an evaluation of the trivariate Tutte polynomial.

\begin{corollary}\label{corollary:ncolors}
Let $\Sigma=(\Gamma,\sigma)$ be a signed graph. Then the number of proper $n$-colorings of $\Sigma$ is given by
\begin{equation}\label{equation:chromatic_poly}
\chi_{\Sigma}(2n\!+\!1) = (-1)^{|V|-k(\Gamma)}(2n\!+\!1)^{k(\Gamma)}T_{\Sigma}\left(-2n,0,\frac{2n}{2n+1}\right) 
\end{equation}
 and the number of proper non-zero $n$-colorings is given by
\begin{displaymath}
\chi_{\Sigma}^*(2n) = (-1)^{|V|-k(\Gamma)}(2n)^{k(\Gamma)}T_{\Sigma}(1-2n,0,1).
\end{displaymath}

\end{corollary}
The number of proper $n$-colorings is an evaluation of the Tutte polynomial of the frame matroid $F(\Sigma)$ at $(-2n,0)$, i.e. similarly to how the number of proper $(2n+1)$-colorings of a graph $\Gamma$ is the evaluation $(-1)^{r(\Gamma)}(2n+1)^{k(\Gamma)}T_\Gamma(-2n,0)$. See Corollary~\ref{thm:gcolors} 
below.
The number of proper non-zero $n$-colorings, however, is not an evaluation of the Tutte polynomial of $F(\Sigma)$. For example, while a balanced complete graph on three vertices and the unbalanced loose handcuff on two vertices (Figure~\ref{fig:M1}) have the same frame matroid (in either case, any pair of edges forms a basis), 
 a balanced complete graph has no non-zero $1$-colorings while the unbalanced loose handcuff has two. 

Zaslavsky's notion of proper (non-zero) $n$-colorings of signed graphs may be generalized as follows to colorings taking values in a finite set $X$ equipped with an involution $\iota$ on $X$.

\begin{definition}\label{definition:Xcoloring}
A \textit{proper $(X,\iota)$-coloring} of a signed graph $\Sigma$ with vertices $V$ is a map $f: V \rightarrow X$ such that, for an edge $e=uv$, we have $f(u) \neq f(v)$ if $e$ is positive and $\iota(f(u)) \neq f(v)$ if $e$ is negative.
\end{definition}

If $X$ is an additive abelian group of order $2n+1$ and if $\iota$ is the involution $\iota(x) = -x$, for $x \in X$, then the definition of a proper $(X,\iota)$-coloring is equivalent with Zaslavsky's definition of a proper $n$-coloring.\\
\indent Let $P_\Sigma(X,\iota)$ denote the number of proper $(X,\iota)$-colorings of $\Sigma$. 
The following theorem establishes that $P_\Sigma(X,\iota)$ is an evaluation of the trivariate Tutte polynomial similar in form to the specialization of the Tutte polynomial of a graph to the classical chromatic polynomial, $$\chi_\Gamma(|X|)=(-1)^{|V|-k(\Gamma)}|X|^{k(\Gamma)}T_{\Gamma}(1-|X|,0),$$ 
counting the number of proper vertex colorings of $\Gamma$ using a finite color set $X$. 
\begin{theorem}\label{theorem:Xcolorings}
For a signed graph $\Sigma = (\Gamma,\sigma)$ with underlying graph $\Gamma = (V,E)$, the number of proper $(X,\iota)$-colorings of $\Sigma$ is given by
\begin{equation}\label{equation:lambdag}
P_{\Sigma}(X,\iota)= (-1)^{|V|-k(\Sigma)}|X|^{k(\Sigma)}T_{\Sigma}\left(1-|X|,0,1-\frac{t}{|X|}\right),
\end{equation}
where $t = |\{x : \iota(x) = x\}|$.
\end{theorem}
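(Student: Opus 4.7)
The plan is to prove the identity by a direct inclusion-exclusion over edge subsets, and then match the resulting sum with the subset expansion of $T_\Sigma$ at the claimed evaluation point. For each edge $e=uv$, let $B_e$ denote the event that a map $f:V\to X$ is \emph{improper} at $e$: that is, $f(u)=f(v)$ if $e$ is positive, and $\iota(f(u))=f(v)$ if $e$ is negative. By standard inclusion-exclusion,
\[
P_\Sigma(X,\iota) \;=\; \sum_{A\subseteq E}(-1)^{|A|}\,N_A,
\]
where $N_A$ is the number of maps $f:V\to X$ that are improper on every edge of $A$.

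The key combinatorial step is to show that $N_A = |X|^{k_b(\Sigma\backslash A^c)}\,t^{k_u(\Sigma\backslash A^c)}$. The constraints $B_e$ for $e\in A$ couple only vertices in the same connected component of the spanning subgraph $(V,A)=\Sigma\backslash A^c$, so $N_A$ factors over the components of $\Sigma\backslash A^c$. On one such component $C$, fix a root $v_0$ and a spanning tree; propagating along tree paths, each traversed edge either preserves the color (positive edge) or applies $\iota$ (negative edge), so $f(v)$ is determined by $f(v_0)$, either as $f(v_0)$ or as $\iota(f(v_0))$, according to the parity of negative edges on the path from $v_0$ to $v$. Closing a fundamental cycle gives a consistency condition: a balanced cycle (even number of negative edges) imposes no restriction, whereas an unbalanced cycle forces $\iota(f(v_0))=f(v_0)$. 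Hence $f(v_0)$ is free in $X$ when $C$ is balanced in $\Sigma$ (contributing $|X|$) and must lie among the $t$ fixed points of $\iota$ when $C$ is unbalanced. That $\iota$ is an involution and that balance is a switching invariant guarantees this count is independent of the choices of root and spanning tree.

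The final step is bookkeeping. Substituting $(X,Y,Z)=(1-|X|,0,1-t/|X|)$ into the subset expansion of $T_\Sigma$ from Definition~\ref{defn:signedtutte} yields
\[
T_\Sigma\!\left(1-|X|,0,1-\tfrac{t}{|X|}\right) \;=\; \sum_{A\subseteq E}(-|X|)^{k(\Sigma\backslash A^c)-k(\Sigma)}(-1)^{|A|-|V|+k_b(\Sigma\backslash A^c)}\!\left(-\tfrac{t}{|X|}\right)^{k_u(\Sigma\backslash A^c)}\!.
\]
Multiplying by $(-1)^{|V|-k(\Sigma)}|X|^{k(\Sigma)}$ and collapsing exponents via $k_b+k_u=k$, all the signs and powers of $|X|$ combine to give $\sum_{A}(-1)^{|A|}|X|^{k_b(\Sigma\backslash A^c)}t^{k_u(\Sigma\backslash A^c)}$, matching the inclusion-exclusion expression for $P_\Sigma(X,\iota)$.

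The main obstacle is the cycle-consistency analysis in the middle step: one must verify that the only nontrivial condition produced by closing a fundamental cycle is the fixed-point condition $\iota(f(v_0))=f(v_0)$, and that distinct unbalanced cycles in the same component produce the same condition (so their effect does not compound). An alternative route would be to check that $P_\Sigma$ satisfies the hypotheses of the Recipe Theorem~\ref{theorem:recipe2}, but this would require treating several edge types (ordinary, bridge, circuit-path-edge, positive loop, bouquet of negative loops) case by case, whereas the inclusion-exclusion route handles everything uniformly.
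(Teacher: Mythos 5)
Your proposal is correct and follows essentially the same route as the paper: inclusion--exclusion over the sets of impropriety, the count $N_A=|X|^{k_b(\Sigma\backslash A^c)}t^{k_u(\Sigma\backslash A^c)}$ established component-by-component via the fixed-point condition forced by unbalanced cycles, and substitution into the subset expansion of $T_\Sigma$. The bookkeeping at the end checks out, so there is nothing to add.
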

\begin{proof}
For a map $f: V \rightarrow X$, define $I_{\Sigma}(f) \subseteq E$ (the set of impropriety of $f$, to use Zaslavsky's term) by
\begin{displaymath}
I_{\Sigma}(f):= \{e = uv : \sigma(e) = 1, f(u) = f(v)\} \cup \{e = uv : \sigma(e) = -1,  \iota(f(u)) = f(v)\}.
\end{displaymath}
We also define
\begin{equation}\label{equation:defgamma}
i(\Sigma) := |\{f: V \rightarrow X : I_{\Sigma}(f) =E\}|, 
\end{equation}
the number of colorings of $\Sigma$ improper on every edge. 
Then 
\begin{equation}\label{equation:gammatel}
i(\Sigma) = t^{k_u(\Sigma)}|X|^{k_b(\Sigma)}.
\end{equation}
To see why equation~(\ref{equation:gammatel}) holds, we may assume that $\Sigma$ is connected. A map $f: V \rightarrow X$ for which $I_{\Sigma}(f) = E$ then is uniquely determined by the value it assigns to a fixed vertex of $\Sigma$. If $\Sigma$ is balanced, there are $|X|$ choices for this value. If $\Sigma$ is unbalanced, then the presence of an unbalanced cycle forces this value $x$ to satisfy $\iota(x) = x$, yielding $t$ choices for $x$.\\
\indent From equation~(\ref{equation:defgamma}) it follows that for $A \subseteq E$ we have 
\begin{displaymath}
i(\Sigma\backslash A^c) = |\{f: V \rightarrow X : I_{\Sigma}(f) \supseteq A\}|.
\end{displaymath}
Using inclusion-exclusion (in the third equality below), we then calculate that
\begin{align*}
P_{\Sigma}(X,\iota) &= |\{f: V \rightarrow X : I_{\Sigma}(f) = \emptyset\}| = \sum_{f: V \rightarrow X}\sum_{A \subseteq I_{\Sigma}(f)}(-1)^{|A|}\\
&= \sum_{A \subseteq E}(-1)^{|A|}i(\Sigma\backslash A^c) = \sum_{A \subseteq E}(-1)^{|A|}t^{k_u(\Sigma\backslash A^c)}|X|^{k_b(\Sigma\backslash A^c)}.
\end{align*}
The results follows upon substituting $X=1-|X|, Y=0$ and $Z=1-\frac{t}{|X|}$ into the subset expansion formula~\eqref{eq:signed_Tutte_subset} defining $T_{\Sigma}(X,Y,Z)$. 
\end{proof}

Alternatively,  Theorem~\ref{theorem:Xcolorings} can be shown by establishing that the number of proper $(X,\iota)$-colorings satisfies the deletion-contraction of Theorem~\ref{theorem:recipe2} with $(x,y,z,\alpha,\beta,\gamma)=(|X|-1,0,1-\frac{t}{|X|},-1,1,1)$; an additional prefactor of $|X|^{k(\Sigma)}$ arises as the number of proper $(X,\iota)$-colorings for a single vertex is $|X|$. 

The focus of Section~\ref{sec:tensions} will be {\em proper $G$-colorings} for a finite additive abelian group $G$, by which we mean proper $(X,\iota)$-colorings in which $X$ is the set of elements of $G$ and $\iota:x\mapsto -x$ is negation (additive inverse). The number of fixed points of $\iota$ in this case is $\frac{|G|}{|2G|}$. 
 Let $P_\Sigma(G)$ denote the number of proper $G$-colorings.  
By Theorem~\ref{theorem:Xcolorings} 
we then have the following:
\begin{corollary}\label{thm:gcolors}
Let $G$ be a finite additive abelian group. 
Then, for a signed graph $\Sigma=(\Gamma = (V,E),\sigma)$, the number of proper $G$-colorings is given by
\begin{displaymath}
P_\Sigma(G) = (-1)^{|V|-k(\Sigma)}|G|^{k(\Sigma)}T_{\Sigma}\left(1-|G|,0,1-\frac{1}{|2G|}\right).
\end{displaymath}
\end{corollary}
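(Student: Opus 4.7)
The plan is to derive this as an immediate consequence of Theorem~\ref{theorem:Xcolorings}, by recognizing that a proper $G$-coloring is precisely a proper $(X,\iota)$-coloring for the specific choice $X=G$ (as an underlying set) and $\iota\colon x\mapsto -x$ (the additive inverse involution on $G$). This identification is by definition, since for an edge $e=uv$ with $\sigma(e)=-1$ the condition $f(u)\neq -f(v)$ is exactly the condition $\iota(f(u))\neq f(v)$.

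First I would verify that $\iota(x)=-x$ is indeed an involution on $G$ (obvious from $-(-x)=x$). The only nontrivial step is then to compute the number of fixed points
\[
t=|\{x\in G:\iota(x)=x\}|=|\{x\in G:2x=0\}|.
\]
I would identify this set as the kernel of the group homomorphism $\varphi\colon G\to G$ defined by $\varphi(x)=2x$, whose image is by definition $2G$. By the first isomorphism theorem, $G/\ker\varphi\cong 2G$, so
\[
t=|\ker\varphi|=\frac{|G|}{|2G|}.
\]
(This matches the remark in the paragraph preceding the corollary.)

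Substituting $|X|=|G|$ and $t=|G|/|2G|$ into the formula~\eqref{equation:lambdag} of Theorem~\ref{theorem:Xcolorings} gives
\[
P_\Sigma(G)=(-1)^{|V|-k(\Sigma)}|G|^{k(\Sigma)}T_\Sigma\!\left(1-|G|,\,0,\,1-\frac{|G|/|2G|}{|G|}\right),
\]
and the fraction inside simplifies to $1/|2G|$, yielding the stated evaluation. There is essentially no obstacle here; the only point requiring any care is the computation of $t$, which reduces to the standard identification of the $2$-torsion subgroup of $G$ as $G/2G$.
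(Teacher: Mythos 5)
Your proposal is correct and is essentially the paper's own argument: the corollary is obtained by specializing Theorem~\ref{theorem:Xcolorings} to $X=G$ with $\iota(x)=-x$, noting that the fixed-point count is $t=|\{x\in G:2x=0\}|=|G|/|2G|$ (the kernel of $x\mapsto 2x$, whose image is $2G$). The substitution and the simplification $1-\tfrac{|G|/|2G|}{|G|}=1-\tfrac{1}{|2G|}$ are exactly as in the paper.
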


\section{Tensions and  potential differences}\label{sec:tensions}

For a finite additive abelian group $G$, 
there is a correspondence between $G$-tensions of a graph $\Gamma$ and colorings of the vertices of $\Gamma$ by elements of $G$: for each such coloring, taking the difference between endpoint colors (the order the difference is taken according to a fixed orientation of $\Gamma$) yields a unique $G$-tension, which is nowhere-zero precisely when the coloring is proper.  Conversely, to each $G$-tension there correspond $|G|^{k(\Gamma)}$ vertex colorings of $\Gamma$.  Therefore tensions of a graph coincide with ``potential differences" of vertex colorings (see e.g.~\cite{biggs97}). 

An analogous notion of a $G$-tension~\cite{chen09} exists for signed graphs; only, unlike for graphs, not every $G$-tension arises from a vertex coloring. 
For signed graphs, then, the notion of a potential difference 
does not coincide with the notion of a tension established in~\cite{chen09}: potential differences of a $G$-coloring of an unbalanced signed graph form a proper subset of $G$-tensions unless $G$ is of odd order.   

We shall define tensions in a different but equivalent way to~\cite{chen09}, and introduce $G$-potential differences as $G$-tensions with an added constraint (the relationship between $G$-potential differences to $G$-colorings will emerge in  Section~\ref{sec:ten_col}). Before giving a formal definition, we require some preliminaries.
 
A walk of a signed graph $\Sigma=(\Gamma,\sigma)$ with underlying graph $\Gamma=(V,E)$, written as a vertex-edge sequence
$$W=(v_1,e_1,v_2,e_2,\dots v_k,e_k,v_{k+1}),$$
is said to be \emph{positive} in $\Sigma=(\Gamma,\sigma)$ if $\prod_{i=1}^k \sigma(e_i)=1$ and \emph{negative} otherwise.  
The walk $W$ is a \emph{closed walk} if $v_{k+1}=v_1$. 
Recall from Section~\ref{sec:frame_matroid} that a {\em circuit} of a signed graph $\Sigma=(\Gamma,\sigma)$ is an edge set forming a balanced cycle, or an edge set forming two unbalanced cycles sharing exactly one common vertex (an {\em unbalanced tight handcuff}), or an edge set forming two vertex-disjoint unbalanced cycles joined by a simple path meeting the cycles exactly in its endpoints (an {\em unbalanced loose handcuff}). In the last case, the edges of the path are called {\em circuit path edges}; otherwise edges of a circuit belong to a cycle of the graph $\Gamma$.
Circuits naturally give rise to positive closed walks in the signed graph with circuit paths edges being used twice and other edges once. We call such  closed walks \emph{circuit walks}.


\begin{definition}\label{def:tensions_2}
	Let $\Sigma=(\Gamma,\sigma)$ be a signed graph, let $\omega$ be an orientation compatible with $\sigma$ and let $G$ be a finite additive abelian group. A map	$f:E\to G$ is a \emph{$G$-tension} of $\Sigma$ with respect to the orientation $\omega$  if and only if, for each circuit walk $W=(v_1,e_1,v_2,e_2,\ldots,v_k,e_k,v_1)$,
	\begin{equation}\label{eq:def_ten_2}
	\sum_{i=1}^k \left(-\omega(v_i,e_i)\prod_{j=1}^{i-1} \sigma(e_j)\right) f(e_i) =0.
	\end{equation}
	
	The map $f$ is said to be a \emph{$G$-potential difference} of $\Sigma$ if and only if $f$ is a $G$-tension such that, for every walk $W=(v_1,e_1,v_2,e_2,\ldots,v_k,e_k,v_1)$ around an unbalanced cycle,
	\begin{equation}\label{eq:def_ten_3}
	\sum_{i=1}^k  f(e_i) \in 2G.
	\end{equation}
\end{definition}

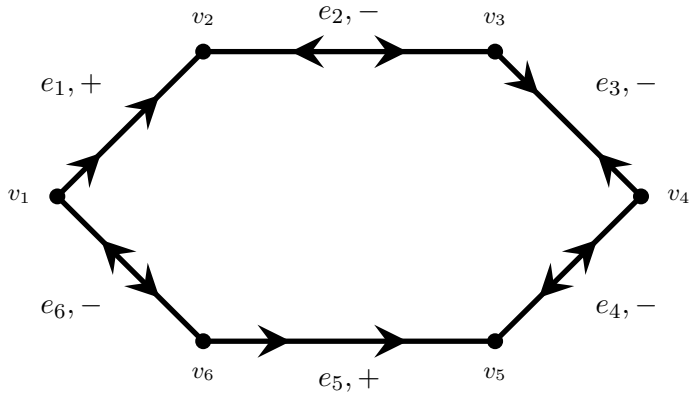
\begin{figure}[h!]
	\centering
	\begin{tikzpicture}[scale=0.48]
	\tikzstyle{vertex}=[circle,fill=black!100,minimum size=7pt,inner sep=0pt]
	
	\tikzstyle{vertex2}=[circle,fill=black!100,minimum size=6pt,inner sep=0pt]
	%
	
	
	\clip (-12,-5.5) rectangle (20, 8);

	\node[vertex2,label={[label distance=0.1cm]180:\footnotesize{$v_1$}}] (v1) at (-8,0) {};
	\node[vertex2,label={[label distance=0.1cm]90:\footnotesize{$v_2$}}] (v2) at (-4,4) {};
	\node[vertex2,label={[label distance=0.1cm]90:\footnotesize{$v_3$}}] (v3) at (4,4) {};
	\node[vertex2,label={[label distance=0.1cm]0:\footnotesize{$v_4$}}] (v4) at (8,0) {};
	\node[vertex2,label={[label distance=0.1cm]270:\footnotesize{$v_5$}}] (v5) at (4,-4) {};
	\node[vertex2,label={[label distance=0.1cm]270:\footnotesize{$v_6$}}] (v6) at (-4,-4) {};

	\draw[line width=2pt,postaction={decorate},
		decoration={markings,mark=at position 0.3 with {\arrow[line width=1.5mm]{stealth}}},
	decoration={markings,mark=at position 0.7 with {\arrow[line width=1.5mm]{stealth }}}] 
	(v1.center) -- 
	node [midway,above left=0.2cm and 0.2cm] {$e_1,+$} 
	(v2.center);

	\draw[line width=2pt,postaction={decorate},
	decoration={markings,mark=at position 0.3 with {\arrowreversed[line width=1.5mm]{stealth}}},
	decoration={markings,mark=at position 0.7 with {\arrow[line width=1.5mm]{stealth}}}] 
	(v2.center) -- 
	node [midway,above=0.2cm] {$e_2,-$} 
	(v3.center);
	
	\draw[line width=2pt,postaction={decorate},
	decoration={markings,mark=at position 0.3 with {\arrow[line width=1.5mm]{stealth}}},
	decoration={markings,mark=at position 0.7 with {\arrowreversed[line width=1.5mm]{stealth}}}] 
	(v3.center) -- 
	node [midway,above right=0.2cm and 0.2cm] {$e_3,-$} 
	(v4.center);
	
		\draw[line width=2pt,postaction={decorate},
	decoration={markings,mark=at position 0.3 with {\arrowreversed[line width=1.5mm]{stealth}}},
	decoration={markings,mark=at position 0.7 with {\arrow[line width=1.5mm]{stealth}}}] 
	(v4.center) -- 
	node [midway,below right=0.2cm and 0.2cm] {$e_4,-$} 
	(v5.center);
	
	\draw[line width=2pt,postaction={decorate},
	decoration={markings,mark=at position 0.3 with {\arrowreversed[line width=1.5mm]{stealth}}},
	decoration={markings,mark=at position 0.7 with {\arrowreversed[line width=1.5mm]{stealth}}}] 
	(v5.center) -- 
	node [midway,below=0.2cm] {$e_5,+$} 
	(v6.center);
	
	\draw[line width=2pt,postaction={decorate},
	decoration={markings,mark=at position 0.3 with {\arrowreversed[line width=1.5mm]{stealth}}},
	decoration={markings,mark=at position 0.7 with {\arrow[line width=1.5mm]{stealth}}}] 
	(v6.center) -- 
	node [midway,below left=0.2cm and 0.2cm] {$e_6,-$} 
	(v1.center);
	
	\node[] (label) at (6,6) {\footnotesize{$W=(v_1,e_1,v_2,e_2,v_3,e_3,v_4,e_4,v_5,e_5,v_6,e_6,v_1)$}};
	\node[] (label) at (4,7) {\footnotesize{$
			-(-1)(+1)f(e_1)-
			(+1)(+1)f(e_2)-
			(-1)(-1)f(e_3)-
			(+1)(+1)f(e_4)-
			(+1)(-1)f(e_5)-
			(+1)(-1)f(e_6)=0$}};

	\end{tikzpicture}
	
	\caption{Tension equation \eqref{def:tensions_2} for the circuit walk $W$. The first sign in brackets corresponds to $\omega(v_i,e_i)$ (negative if the half-edge is goes out of and positive if it goes into $v_i$), and the second sign in brackets corresponds to the product of signs on the previous edges.  Alternatively, the signing $-\omega(v_i,e_i)\prod_{j=1}^{i-1}\sigma(e_j)$ is $+1$ if the half-edge $(v_i,e_i)$ has the same orientation as the walk (from $v_i$ to $v_{i+1}$, the first half-edge oriented outwards), and $-1$ otherwise. If the graph is balanced and all edges are positively signed, then the coefficient $-\omega(v_i,e_i)\prod_{j=1}^{i-1}\sigma(e_j)$ is the usual sign for a graph tension: $+1$ if the walk traverses the edge in the same direction as the edge orientation, and $-1$ if the walk traverses the edge in the opposite direction.} \label{fig:def_ten}
\end{figure}

 See Figure~\ref{fig:def_ten} for an example of \eqref{eq:def_ten_2}.
When $G$ has odd order, equation~\eqref{eq:def_ten_3} is always satisfied, so that $G$-potential differences coincide with $G$-tensions in this case.
\begin{remark}
	Our definition of tension is equivalent to the definition of tensions in~\cite[(4.4)]{chen09}, as $[\omega,\omega_W](e_i)$ (in the notation of~\cite{chen09}) is equal to $-\omega(v_i,e_i)\prod_{j=1}^{i-1} \sigma(e_j)$ when the orientation of the walk $W$ has the half-edge $(v_1,e_1)$ going out of  $v_1$. See Appendix~\ref{sec:app_fig_lemma} for more details. 
	
\end{remark}

\begin{remark}\label{rmk:wd}
Although Definition~\ref{def:tensions_2} depends on the choice of orientation $\omega$ of $\Sigma=(\Gamma,\sigma)$ compatible with $\sigma$ and involves a choice of starting vertex for the walk $W$, the number of $G$-tensions of $\Sigma$ is independent of both choices:  changing the direction of an edge $e$ in $\omega$ corresponds to negating the value of each $G$-tension on $e$; starting the walk at a different vertex, say $v_i$, has the effect of multiplying equation \eqref{eq:def_ten_2} by $\prod_{j=1}^{i-1} \sigma(e_j)^{-1}$. 
\end{remark}

Switching at a vertex $v$ in $\Sigma=(\Gamma, \sigma)$ flips the sign function $\sigma$ on edges incident with $v$, making an equivalent signed graph $\Sigma'=(\Gamma,\sigma')$. Orientations $\omega$ of $\Sigma$ compatible with $\sigma$ are transformed into orientations $\omega'$ of $\Sigma'$ compatible with $\sigma'$ by switching the sign of $\omega$ on each half-edge $(v,e)$ incident with $v$.  The proof of the following proposition consists in showing that the equations~\eqref{eq:def_ten_2} for circuit walks $W$ that define a tension of $\Sigma$ are preserved under vertex switching. 

\begin{proposition}\label{prop:tension_switch}
The set of $G$-tensions and the set of $G$-potential differences of a signed graph are invariant under switching. 
\end{proposition}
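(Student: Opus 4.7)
The plan is to reduce to a single vertex switch at some $v\in V$, producing $\Sigma'=(\Gamma,\sigma')$ with $\sigma'(e)=-\sigma(e)$ precisely on non-loop edges $e$ incident with $v$, and with compatible orientation $\omega'(u,e)=-\omega(u,e)$ if and only if $u=v$. Since the circuits of $F(\Sigma)$ (balanced cycles, unbalanced tight/loose handcuffs) are determined entirely by the graph structure and the balancedness of cycles — both switching invariants — the collection of circuit walks is the same for $\Sigma$ and $\Sigma'$. It therefore suffices to show, for each such circuit walk $W=(v_1,e_1,\dots,v_k,e_k,v_1)$, that the tension equation \eqref{eq:def_ten_2} for $(\sigma,\omega)$ is equivalent to its counterpart for $(\sigma',\omega')$.

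Write $\delta_j=\mathbf{1}[v_j=v]$. Then $\omega'(v_i,e_i)=(-1)^{\delta_i}\omega(v_i,e_i)$, while $\sigma'(e_j)\neq\sigma(e_j)$ if and only if $e_j$ is a non-loop incident with $v$. The key observation is that a non-loop at $v$ is precisely an edge whose traversal toggles the indicator $\delta$ of being at $v$, whereas loops at $v$ and edges missing $v$ leave $\delta$ unchanged. Hence the number of indices $j<i$ with $\sigma'(e_j)\neq\sigma(e_j)$ is congruent modulo $2$ to $\delta_1+\delta_i$. Combining with the orientation flip, the coefficient of $f(e_i)$ in the $(\sigma',\omega')$-version of \eqref{eq:def_ten_2} equals $(-1)^{\delta_i}\cdot(-1)^{\delta_1+\delta_i}=(-1)^{\delta_1}$ times the coefficient in the $(\sigma,\omega)$-version. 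Since this global factor is independent of $i$, the two linear equations over $G$ have the same solution set, so $f$ is a $G$-tension of $\Sigma$ if and only if it is a $G$-tension of $\Sigma'$.

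For the $G$-potential difference condition, the additional requirement \eqref{eq:def_ten_3} that $\sum_{i=1}^k f(e_i)\in 2G$ along a walk around an unbalanced cycle involves only the values of $f$ and the underlying cycle. Since the underlying graph is fixed and the property of a cycle being unbalanced is preserved under switching, this condition is unaffected; combined with tension invariance, the set of $G$-potential differences is likewise switching-invariant.

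The main obstacle is the parity argument in the second paragraph: one must notice that the edges whose $\sigma$-sign flips under switching at $v$ are precisely the edges whose traversal toggles $\delta_j$, so the accumulated sign changes in $\prod_{j<i}\sigma(e_j)$ telescope into a dependence only on $\delta_1$ and $\delta_i$, matching and cancelling the single orientation flip contributed by $\omega'(v_i,e_i)$ up to the global factor $(-1)^{\delta_1}$. Once this bookkeeping is checked — with loops at $v$ and edges not incident with $v$ contributing trivially — the proposition follows.
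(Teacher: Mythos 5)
Your proof is correct, and it takes a genuinely cleaner route than the paper's. The paper proves the same statement by an explicit case analysis on circuit walks: it separately treats the cases where the switched vertex $v$ occurs once or twice in the walk, with further sub-cases for loops at $v$ and for loose versus tight handcuffs, verifying in each case that equation~\eqref{eq:def_ten_2} is replaced by its negation. Your argument replaces all of this with a single parity bookkeeping step: the indicator $\delta_j=\mathbf{1}[v_j=v]$ is toggled exactly by the edges whose sign flips under the switch (non-loops at $v$), so the sign change accumulated in $\prod_{j<i}\sigma(e_j)$ telescopes to $(-1)^{\delta_1+\delta_i}$, and the extra $(-1)^{\delta_i}$ from the flipped half-edge orientation $\omega'(v_i,e_i)$ cancels the $\delta_i$-dependence, leaving the uniform global factor $(-1)^{\delta_1}$. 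Two remarks on what each approach buys. First, your argument is walk-agnostic: it applies verbatim to \emph{any} closed walk, not just circuit walks, whereas the paper's case analysis leans on the structural fact that a vertex appears at most twice in a circuit walk; this generality would, for instance, let one prove switching-invariance directly for the characterization in Proposition~\ref{prop.technical_circuits}. Second, the loop-at-$v$ subtlety is handled automatically in your framework (both half-edges of a loop at $v$ flip, so $\omega'(v_j,e_j)=-\omega(v_j,e_j)$ with $\delta_j=1$, while the loop's sign and hence the toggle count are unchanged since $\delta_j=\delta_{j+1}$), which is exactly the point where the paper has to spell out several extra sub-cases. The treatment of the potential-difference condition~\eqref{eq:def_ten_3} is the same in both proofs: it involves only the values of $f$ and the (switching-invariant) unbalanced cycles, so nothing needs to be checked.
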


\begin{proof} We just need to show that the set of $G$-tensions is invariant under switching, as the condition~\eqref{eq:def_ten_3} for a $G$-potential difference is independent of edge signing and orientation. 

The effect on $\omega$ of switching at a vertex $v$ is to multiply its value on each half-edge incident with $v$ by $-1$; the signs of edges incident with $v$ are likewise flipped (except for loops on $v$: both half-edges of a loop at $v$ are multiplied by $-1$, thus preserving the sign of the loop).

By Remark~\ref{rmk:wd}, it suffices to consider circuit walks $W=(v,e_1,v_2,e_2,\ldots, v_k,e_{k},v)$ starting and finishing at a given vertex~$v$. 
The coefficient $-\omega(v,e_1)$ of $f(e_1)$ in equation~\eqref{eq:def_ten_2} has its sign flipped. We now describe the effect on the coefficients of $f(e_i)$ for $i\in\{2,\dots, k\}$.
As $W$ is a circuit walk, there are just two cases to consider: there is no $i\in\{2,\ldots, k\}$ such that $v=v_i$, or there is exactly one $j\in\{2,\ldots, k\}$ such that $v=v_j$.

Consider the first case where $v_i\neq v$ for  $i\in\{2,\ldots, k\}$, and the walk $W$ does not simply traverse a loop on $v$. Then, for each $i$, the coefficient $-\omega(v_i,e_i)\linebreak[0]\sigma(e_1)\linebreak[0]\cdots\linebreak[0]\sigma(e_{i-1})$  of $f(e_i)$ in equation~\eqref{eq:def_ten_2} has its sign flipped, as $\sigma(e_1)$ is the only sign that is changed (along with $\sigma(e_k)$, which does not feature).
Therefore, after switching at $v$, equation~\eqref{eq:def_ten_2} is replaced by its negation, thus preserving the solutions for $f(e_i)$ to the equation. When there is a loop $e$ on $v$, the only possibility in this case is the walk $W=(v,e,v)$, for which the coefficient of $f(e)$ in equation~\eqref{eq:def_ten_2} is equal to $-\omega(v,e)$, which is flipped in sign under switching at~$v$. 

Consider now the second case where $v_j=v$ for exactly one  $j\in\{2,\ldots, k\}$, i.e., $W=(v,\linebreak[0]e_1,\linebreak[0]\ldots,\linebreak[0] e_{j-1},\linebreak[0]v,\linebreak[0]e_j,\linebreak[0]\ldots,\linebreak[0] e_k,\linebreak[0]v)$. First we assume $j\in\{3,\dots, k-1\}$, i.e. neither $e_1$ nor $e_k$ are loops on $v$. 
The same argument as for the first case shows that the sign of the coefficient of $f(e_i)$ is flipped for each $i\in\{1,\dots, j-1\}$. The coefficient $-\omega(v,e_j)\linebreak[0]\sigma(e_1)\linebreak[0]\cdots\linebreak[0]\sigma(e_{j-1})$  of $f(e_j)$ has its sign flipped, as $\omega(v,e_j)$, $\sigma(e_1)$ and $\sigma(e_{j-1})$ are negated and the remaining signs are preserved. 
For $i\in\{j+1,\dots, k\}$ the coefficient $-\omega(v_i,e_i)\linebreak[0]\sigma(e_1)\linebreak[0]\cdots\linebreak[0]\sigma(e_{i-1})$ of $f(e_i)$ in equation~\eqref{eq:def_ten_2} has its sign flipped as $\sigma(e_1)$, $\sigma(e_{j-1})$ and $\sigma(e_j)$ are negated and the remaining signs are preserved. Again, the overall effect is to negate the whole of  equation~\eqref{eq:def_ten_2}.

When there is a loop $e$ on $v$, we may assume $W=(v,e,v,e_2, \dots, v_k, e_k,v)$, where $e_2=e_k$ is not a loop if $k>2$  (so the underlying circuit is an unbalanced loose handcuff), while if $k=2$ then $W=(v,e,v,e_2,v)$ where $e_2$ is another loop on $v$ (so the underlying circuit is an unbalanced tight handcuff). For the case of a loose handcuff, the coefficient of $f(e)$ is $-\omega(v,e)$, which is flipped in sign under switching at $v$. The coefficient of $f(e_2)$ is $-\omega(v,e_2)\sigma(e)$, in which $\omega(v,e_2)$ has signed flipped under switching at $v$ (while the sign of $\sigma(e)$ is preserved since $e$ is a loop). 
For $i\in\{3,\dots, k\}$, the coefficient of $f(e_i)$ is $-\omega(v_i,e_i)\sigma(e)\sigma(e_2)\cdots \sigma(e_{i-1})$, which is flipped in sign as $\sigma(e_2)$ has its sign flipped. 
For the case of the tight handcuff $W=(v,e,v,e_2,v)$, both coefficients $-\omega(v,e_1)$ and $-\omega(v,e_2)\sigma(e_1)$ are flipped in sign (the sign of $\sigma(e)$ is preserved while both $\omega(v,e)$ and $\omega(v,e_2)$ have flipped signs). Hence again the overall effect is to negate the whole of equation~\eqref{eq:def_ten_2}.
\end{proof}

Proposition~\ref{prop:tension_switch} implies that if $\Sigma$ is balanced, and hence switching equivalent to an all-positive signed graph, then a $G$-tension of $\Sigma$ corresponds exactly with a $G$-tension of its underlying graph $\Gamma$  (see, for instance, ``global $G$-tension'' in the terminology of~\cite{goodall16,litjens18}). 
It follows that if $\Sigma$ is balanced then every $G$-tension of $\Sigma$ is a $G$-potential difference.

Tensions are equivalently defined by stipulating that the net sum around any positive closed walk (not just a circuit walk) is zero, as expressed by the following proposition. 
\begin{proposition} \label{prop.technical_circuits}
With the same notation as in Definition~\ref{def:tensions_2}. 
\begin{itemize}
	\item The map $f:E\to G$ is a $G$-tension if and only if \eqref{eq:def_ten_2} is satisfied for every positive closed walk $W$. 
	\item The map $f:E\to G$ is a $G$-potential difference if and only if $f$ is a tension and 
 \eqref{eq:def_ten_3} is satisfied for every negative closed walk $W$.
\end{itemize} 
\end{proposition}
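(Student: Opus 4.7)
For the first assertion, the backward direction is immediate since every circuit walk is a positive closed walk. For the forward direction I will use the \emph{signed double cover} $\tilde{\Sigma}$ on vertex set $V\times\{+1,-1\}$, obtained by lifting each edge $e=uv$ to the two edges joining $(u,\epsilon)$ and $(v,\sigma(e)\epsilon)$ for $\epsilon\in\{\pm 1\}$. Lift $\omega$ and $f$ by setting $\tilde{\omega}((u,\epsilon),\tilde{e}):=\epsilon\,\omega(u,e)$ and $\tilde{f}(\tilde{e}):=f(e)$; the compatibility $\sigma(e)=-\omega(u,e)\omega(v,e)$ translates directly to $\tilde{\omega}$ being a valid orientation of the plain graph $\tilde{\Sigma}$. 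A short calculation (tracking the sheet $\prod_{j<i}\sigma(e_j)$ reached at the $i$-th step of a lift starting from the $+$-sheet) shows that the left-hand side of \eqref{eq:def_ten_2} for a walk $W$ in $\Sigma$ equals the ordinary tension sum $\sum_i\tilde{\omega}(\tilde{v}_i,\tilde{e}_i)\tilde{f}(\tilde{e}_i)$ along the lifted walk $\tilde{W}$ in $\tilde{\Sigma}$. Since a positive closed walk in $\Sigma$ lifts to a genuine closed walk in $\tilde{\Sigma}$, the first assertion reduces to the classical fact that in an ordinary graph every closed-walk tension equation is an integer combination of cycle tension equations, once the cycles of $\tilde{\Sigma}$ are identified with the circuit walks of $\Sigma$.

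For the second assertion, the backward direction is immediate. For the forward direction, observe that for any tension $f$ and any positive closed walk $W=(v_1,e_1,\ldots,v_k,e_k,v_1)$ the first assertion gives
\begin{displaymath}
\sum_{i=1}^{k}\omega(v_i,e_i)\prod_{j=1}^{i-1}\sigma(e_j)\,f(e_i)=0.
\end{displaymath}
Each coefficient lies in $\{\pm 1\}$, and $g\equiv -g\pmod{2G}$ for every $g\in G$, so reducing modulo $2G$ yields $\sum_{i=1}^{k}f(e_i)\in 2G$ for every positive closed walk. If $W_1,W_2$ are two negative closed walks at the same vertex, their concatenation $W_1\cdot W_2$ is positive, hence $\sum_{W_1}f+\sum_{W_2}f\in 2G$ and so $\sum_{W_1}f\equiv\sum_{W_2}f\pmod{2G}$. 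Conjugating by a path $P$ between two basepoints contributes $\sum_P f+\sum_{P^{-1}}f=2\sum_P f\in 2G$, extending the congruence to walks at any two vertices of the same connected component of $\Gamma$. Thus \eqref{eq:def_ten_3} on a single walk around an unbalanced cycle in each unbalanced component already forces \eqref{eq:def_ten_3} on every negative closed walk of that component, while a balanced component contains no negative closed walks at all.

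The main obstacle sits in the first assertion, in the identification of cycles of $\tilde{\Sigma}$ with circuit walks of $\Sigma$. A cycle of $\tilde{\Sigma}$ contained in one sheet projects onto a balanced cycle of $\Sigma$; a cycle that swaps sheets must include both lifts of every edge along some path and exactly one lift of each edge of two unbalanced cycles attached at the ends of that path, projecting onto a tight or loose handcuff with the intermediate path covered twice. Verifying that this case analysis produces exactly the three types of circuit in $F(\Sigma)$ and their canonical circuit walks (path edges traversed twice, cycle edges once) is the delicate step; everything else is bookkeeping.
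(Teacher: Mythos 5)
Your reduction to the double cover is an appealing idea, and the bookkeeping you set up is correct: the sheet reached at step $i$ is indeed $\prod_{j<i}\sigma(e_j)$, the lifted orientation $\tilde\omega$ is compatible, the left-hand side of \eqref{eq:def_ten_2} equals the ordinary tension sum along the lifted walk, and positive closed walks are exactly those that lift closed. Your second bullet is also sound (granted the first): reducing the tension equation modulo $2G$ kills the $\pm1$ coefficients, and the concatenation/conjugation argument transports the coset condition from one unbalanced cycle to every negative closed walk of its component. The problem is the step you yourself flag as delicate: the claimed identification of cycles of $\tilde\Sigma$ with circuit walks of $\Sigma$ is false. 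Take $\Sigma$ to be two vertices $u,v$ joined by parallel edges $e_1,e_2,e_3$ with $\sigma(e_1)=\sigma(e_2)=+1$ and $\sigma(e_3)=-1$. In $\tilde\Sigma$ the $4$-cycle through $u_+,v_+,u_-,v_-$ using one lift of $e_1$, one lift of $e_2$ and \emph{both} lifts of $e_3$ projects to the closed walk $(u,e_1,v,e_3,u,e_2,v,e_3,u)$, whose edge support $\{e_1,e_2,e_3\}$ is not a circuit of $F(\Sigma)$ (it properly contains the balanced cycle $\{e_1,e_2\}$, and the two unbalanced cycles $e_1e_3$ and $e_2e_3$ share an edge, so they form no handcuff). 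In general, whenever two unbalanced cycles of $\Sigma$ share edges, $\tilde\Sigma$ has cycles projecting onto such theta-like configurations, so your case analysis does not exhaust the cycles of $\tilde\Sigma$.

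What you actually need is not that every cycle of $\tilde\Sigma$ projects to a circuit walk, but that every cycle tension equation of $\tilde\Sigma$ lies in the integer span of the circuit-walk equations of $\Sigma$ — and that is essentially the content of the proposition itself, so as written the argument is circular at its crux. (In the example above the two copies of $f(e_3)$ enter with opposite signs and cancel, leaving the balanced-cycle equation; but this cancellation must be proved in general, and the lifts of circuit walks do not even generate $H_1(\tilde\Sigma)$ — in the example they span a rank-$2$ sublattice of a rank-$3$ cycle space — so the "classical fact" cannot be invoked directly.) The paper avoids the cover altogether and instead takes a minimal positive closed walk violating \eqref{eq:def_ten_2} and performs walk surgery (splitting at repeated vertices, inserting $Z*\overline{Z}$ detours, comparing subwalks of opposite sign) until the counterexample is forced to be a circuit walk. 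To rescue your approach you would need an analogous surgery argument inside $\tilde\Sigma$, or a carefully chosen generating family of cycles of $\tilde\Sigma$ together with a proof that the deck-transformation symmetry makes the non-circuit projections redundant; either way the hard work remains to be done.
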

	
\begin{proof}
Let $G$ be a finite abelian group, $\Sigma=(\Gamma,\sigma)$ a signed graph, $\omega$ an orientation compatible with $\sigma$, and $f$ a $G$-tension of $\Sigma$, i.e. satisfying equation~\eqref{eq:def_ten_2} for each circuit walk $W$.  
In order to show that equation~\eqref{eq:def_ten_2} is satisfied for every positive closed walk~$W$, we argue by contradiction, showing that any counterexample must contain a subwalk of smaller nonzero length that is also a counterexample, or implies that a circuit walk is also a counterexample. As neither option is possible, we conclude that no counterexample exists. 
 
We introduce some terminology and notation needed in the proof.	
The {\em concatenation} of walks $W=(u, X, v)$ and $Z =(v,Y,w)$ is defined by 
$W\ast Z=(u,X,v,Y,w)$. A closed walk $W=(v_1,e_1,\dots, v_k,e_k,v_1)$ can be expressed as a concatenation $X*Y$ of two walks $X$ and $Y$ 
in which $X$ starts at an arbitrary vertex of the walk $W$, and $Y$ starts at the last vertex of $X$ and finishes at the first vertex of $X$.  
Given a walk $W=(v_1,e_1,v_2,e_2,\ldots v_k,e_k,v_{k+1})$, its {\em length} is $k$, and its {\em interior vertices} are  the vertices $v_2,\dots, v_k$ (which may include the first or last vertex if revisited by the walk). A walk is \emph{nontrivial} if it has non-zero length, and a subwalk of a walk $W$ is {\em proper} is it is neither trivial nor the whole walk $W$. 
Let $\sigma(W):=\prod_{i=1}^k\sigma(e_i)$, and, for $i=1,\ldots,k+1$, let $W^{(i)}=(v_1,e_1,\ldots,e_{i-1},v_{i})$. 
The \emph{height} of $f$ on $W$ is then defined as
$$
f(W)=\sum_{i=1}^k -\sigma(W^{(i)})\omega(v_i,e_i)f(e_i).
$$
Definition~\ref{def:tensions_2} says that $f$ is a $G$-tension if and only if $f(W)=0$ for all circuit walks $W$. Our goal is to show that $f(W)=0$ for any positive closed walk. 

We begin by collecting some properties of the height function that we shall use.
For $W=W_1*W_2*\cdots *W_\ell$, a concatenation of walks,
\begin{equation}\label{eq:concat}
f(W_1*W_2*\cdots*W_\ell)=\sum_{i=1}^\ell \left(\prod_{j=1}^{\ell-1} \sigma(W_j)\right)f(W_i).
\end{equation}
If $W=X*Y$ is a positive closed walk then we have $\sigma(X)=\sigma(Y)$, and thus
\begin{equation}\label{eq:reorder}
f(X*Y)=f(X)+\sigma(X)f(Y)=\sigma(X)(f(Y)+\sigma(Y)f(X))=\sigma(X)f(Y*X).
\end{equation}
We may therefore cyclically permute a positive closed walk $W$ to bring any of its subwalks as its initial subwalk without affecting the property that $f(W)=0$. (This extends the observation made in Remark~\ref{rmk:wd} that the defining equations~\eqref{eq:def_ten_2} for a $G$-tension do not depend on the starting vertex chosen for each circuit walk $W$.)
The {\em reverse} walk $\overline{W}$ of a walk $W=(v_1,\linebreak[0]e_1,\linebreak[0]v_2,\linebreak[0]e_2,\linebreak[0]\dots,\linebreak[0] v_k,\linebreak[0]e_k,\linebreak[0]v_{k+1})$ is the walk $\overline{W}=(v_{k+1},\linebreak[0]e_k,\linebreak[0]v_k,\linebreak[0]e_{k-1},\linebreak[0]v_{k-1},\linebreak[0]\dots,\linebreak[0] e_1,\linebreak[0]v_1).$ 	
We have $\sigma(W)=\sigma(\overline{W})$ and more generally $\sigma(\overline{W}^{(i)})=\sigma(W)\sigma(W^{(k+2-i)})$ for $i=1,\ldots,k+1$.
We observe that
\begin{align}
	f(\overline{W})&=\sum_{i=1}^k - \sigma(W)\sigma(W^{(k+2-i)})\cdot(-\sigma(e_{k+1-i}))\omega(v_{k+1-i},e_{k+1-i})\cdot f(e_{k+1-i})\nonumber
	\\
	&=-\sigma(W)\sum_{i=1}^k - \sigma(W^{(k+1-i)})\omega(v_{k+1-i},e_{k+1-i})f(e_{k+1-i})\nonumber
	\\
	&=-\sigma(W)f(W),\label{eq:reverse}
	\end{align}
where in the first line we use that $\omega(v_{k+2-i},e_{k+1-i)})=-\sigma(e_{k+1-i})\omega(v_{k+1-i},e_{k+1-i})$ by compatibility of $\omega$ with~$\sigma$. 
As a consequence, for walks $X,Y$ and $Z$ for which the concatenation $X*Z*\overline{Z}*Y$ is defined, 
\begin{equation}\label{eq:goback}
f(X*Z*\overline{Z}*Y)=f(X*Y).
\end{equation}
Indeed, using \eqref{eq:concat} and \eqref{eq:reverse},
\begin{align*} f(X*Z*\overline{Z}*Y)&=f(X)+\sigma(X)f(Z)+\sigma(X*Z)f(\overline{Z})+\sigma(X*Z*\overline{Z})f(Y)\\
& = f(X)+\sigma(X)f(Z)-\sigma(X)\sigma(Z)^2f(Z)+\sigma(X)f(Y)\\
& = f(X)+\sigma(X)f(Y) = f(X*Y).
\end{align*}


Assume now that $W$ is a positive closed walk of minimum nonzero length such that $f(W)\neq 0$. 
We establish a series of claims which together imply that $W$ cannot exist. 

If $W=X*Y$ for nontrivial closed walks $X$ and $Y$ then both $X$ and $Y$ must be negative for otherwise $0\neq f(W)=f(X)+\sigma(X)f(Y)$ by~\eqref{eq:concat} and one of $X$ and $Y$ is of smaller length than $W$. 
Thus,
\begin{equation}\label{eq:no pos subwalk}
\text{No proper closed subwalk of $W$ is positive.}
\end{equation}

By the number of times a vertex $v$ occurs in a closed walk $(v_1,e_1,v_2,\ldots,e_k,v_{k+1})$ we mean the number of indices $i\leq k$ such that $v_i=v$. 

\begin{equation}\label{eq:vertex twice}
\text{No vertex occurs more than twice in $W$.}
\end{equation}
To see this, suppose $W=X*Y*Z$ for nontrivial closed walks $X,Y,Z$ each starting and ending at vertex~$v$. (By~\eqref{eq:reorder} we may assume $W$ starts at~$v$.) As $W$ is positive, not all of $X,Y,Z$ can be negative. By applying~\eqref{eq:reorder} if necessary, we may take $X$ to be positive. By~\eqref{eq:concat} we have $f(W)=f(X)+f(Y*Z)$.
Since $f(W)\neq 0$, either $f(X)\neq 0$ or $f(Y*Z)\neq 0$. But $X$ and $Y*Z$ are positive closed walks of smaller length than $W$. 
This contradicts minimality of $W$ and establishes~\eqref{eq:vertex twice}. 


We say that a nontrivial subwalk $Z$ of closed walk $W$ appears more than once in $W$ if $W=Z*X*Z*Y$ (starting $W$ at the first vertex of $Z$ in one of its appearances as a subwalk). 
\begin{equation}\label{eq:no subwalk}
\text{No nontrivial subwalk appears more than once in $W$}.
\end{equation}
Suppose to the contrary that 
$W=Z*X*Z*Y$. 
The walk $Z*X$ is closed as the last vertex of $X$ is the first vertex of $Z$, and hence by~\eqref{eq:no pos subwalk} cannot be positive. 
Since $W$ is closed and positive, the walk $X*\overline{Y}$  is closed and positive.
By~\eqref{eq:concat} we have, since $\sigma(Z*X)=-1$,
\[
f(W)=f(Z)+\sigma(Z)f(X)-f(Z)+\sigma(Z*X*Z)f(Y)=\sigma(Z)(f(X)+\sigma(X)f(\overline{Y})),
\]
where the last equality follows from~\eqref{eq:reverse}.
This implies that $f(X*\overline{Y})\neq 0$. But $X*\overline{Y}$ is a positive closed walk of smaller length than $W$, contradicting minimality of $W$. This establishes~\eqref{eq:no subwalk}.
\begin{equation}\label{eq:no interior}
\text{No interior vertex of a proper closed subwalk of $W$ occurs elsewhere in $W$.}
\end{equation}
Suppose on the contrary that $W=C*X*Y$ for proper closed subwalk $C$ and subwalks $X,Y$ such that $v$ is an interior vertex of $C$ that appears again as the last vertex of $X$ and first vertex of $Y$.  
Let $C=A*B$ where $A$ ends in $v$ and $B$ begins with $v$ ($A$ and $B$ are nontrivial as $v$ is an interior vertex of $C$). 
By~\eqref{eq:no pos subwalk}, $C$ must be negative, and then $X*Y$ is also negative. Thus $A$ and $B$ have opposite signs and $X$ and $Y$ have opposite signs. Since $B*X$ and $Y*A$ are closed subwalks of $W=A*B*X*Y$ too, they are negative by~\eqref{eq:no pos subwalk}. This forces $\sigma(A)=\sigma(X)=-\sigma(B)=-\sigma(Y)$. 
Since $A*\overline{X}$ and $B*\overline{Y}$ are positive closed walks of smaller length than $W$ we have 
$$0=f(A*\overline{X})=f(A)+\sigma(A)f(\overline{X})=f(A)-\sigma(A)\sigma(X)f(X)=f(A)-f(X),$$
and similarly $f(B)-f(Y)=0$. 
But, using $\sigma(A)\sigma(B)=-1$ and $\sigma(A)=\sigma(X)$, 
$$0\neq f(W)=f(A)+\sigma(A)f(B)-f(X)-\sigma(X)f(Y)=f(A)-f(X)+\sigma(A)[f(B)-f(Y)]=0,$$
a contradiction. This establishes~\eqref{eq:no interior}.

\begin{equation}\label{eq:basic}
\text{$W$ contains at most two proper closed subwalks}.
\end{equation}
To see this, suppose on the contrary that $W=A*X*B*Y*C*Z$ for nontrivial closed subwalks $A,B,C$ and walks $X,Y,Z$ (we may assume $W$ begins with one of its closed subwalks by~\eqref{eq:reorder}, and by~\eqref{eq:no interior} two proper closed subwalks share no interior vertices). 
By minimality of $W$, each of $A, B$ and $C$ must be negative. By using~\eqref{eq:reorder} if necessary, we may assume $X$ has minimum length among $X,Y,Z$. 
Let $W_1=A*X*B*\overline{X}$ and $W_2=X*Y*C*Z$. The walk $W_1$ is closed (since $X$ starts at the end of $A$ and $\overline{X}$ ends at the start of $A$), positive (since $\sigma(A)=-1=\sigma(B)$ and $\sigma(X)=\sigma(\overline{X})$), and shorter than $W$ (since $\overline{X}$ has strictly smaller length than $Y*C*Z$ by assumption on $X$). Thus $W_2$ is also closed, positive, and of smaller length than $W_1$. 
By~\eqref{eq:goback} and~\eqref{eq:concat}, $f(W)=f(W_1*W_2)=f(W_1)+f(W_2)$, which leads to the contradiction that one of the shorter closed positive walks $W_1$ and $W_2$ has nonzero height. This establishes~\eqref{eq:basic}.  



Properties~\eqref{eq:no pos subwalk}, \eqref{eq:vertex twice}, \eqref{eq:no subwalk}, \eqref{eq:no interior} and \eqref{eq:basic} together imply that $W=A*X*B*Y$ for simple closed negative walks $A$ and $B$ and paths $X$ and $Y$ such that $X*Y$ is a positive closed walk, and, by minimality of $W$, $f(X*Y)=f(X)+\sigma(X)f(Y)=0$. 
We have 
\begin{align*}0\neq f(W)=f(A*X*B*Y)&=f(A*X*B)+\sigma(X)f(Y)\\
&=f(A*X*B)-f(X)\\
&=f(A*X*B)+\sigma(\overline{X})f(\overline{X})=f(A*X*B*\overline{X}),\end{align*}
and since $A*X*B*\overline{X}$ is a circuit walk, we must then by~\eqref{eq:def_ten_2} have $f(W)=0$. This final contradiction 
establishes that there is no counterexample $W$ to the first part of the proposition. 

To prove the second part of the proposition characterizing $G$-potential differences, we begin with the observation that for an arbitrary walk $W=(v_1,e_1,v_2,e_2,\dots, v_k,e_k,v_{k+1})$ the height
$f(W)$ belongs to the same coset of $2G$ as $\sum_{i=1}^kf(e_i)$, since  $-u+2G=u+2G$ for any $u\in G$.
Thus the constraint~\eqref{eq:def_ten_3} defining a $G$-potential difference is equivalent to $f(W)\in 2G$ for every walk $W$ around an unbalanced cycle.
A negative closed walk $Z$ must pass through a vertex $v$ belonging to a walk $W$ traversing an unbalanced cycle. (By vertex switching we may assume all edges not in cycles traversed by $Z$ are positive; then there must be a negative cycle that $Z$ meets.) 
Making $v$ the start and end vertex of $Z$ (permitted by~\eqref{eq:reorder}), the walk $Z*W$ is positive and hence $f(Z)-f(W)=0$. By~\eqref{eq:def_ten_3}, $f(W)\in 2G$, whence $f(Z)\in 2G$, and the result follows.  
\end{proof}

The set of $G$-tensions of a connected signed graph can be partitioned into $\frac{|G|}{|2G|}$ subsets according to the height of walks traversing unbalanced cycles modulo the subgroup $2G$ (zero for $G$-potential differences as per equation~\eqref{eq:def_ten_3}): 
\begin{proposition}\label{fact.1}
	
	If $\Sigma$ is a connected unbalanced signed graph and $f$ is a $G$-tension, then 
there is $u\in G$ such that for every closed walk $W=(v_1,e_1,\ldots,v_k,e_k,v_1)$ around an unbalanced cycle of~$\Sigma$
	\begin{displaymath}
	\sum_{i=1}^k f(e_i)\in u+2G.
	\end{displaymath}
\end{proposition}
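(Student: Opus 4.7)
The plan is to fix a reference unbalanced closed walk $W_0$ and show that for any other closed walk $W$ around an unbalanced cycle, $\sum f(e_i)$ taken along $W$ lies in the same coset of $2G$ as $\sum f(e_i)$ taken along $W_0$. The value $u$ is then defined as the latter sum.

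First I would pick an unbalanced cycle in $\Sigma$ (which exists because $\Sigma$ is unbalanced) together with a closed walk $W_0$ at some base vertex $v_0$ that traverses it. Given any other closed walk $W$ around an unbalanced cycle, based at a vertex $v$, use connectedness of $\Sigma$ to pick a path $P$ from $v_0$ to $v$. Form the closed walk
\[
W^\ast := W_0 \ast P \ast W \ast \overline{P},
\]
which is based at $v_0$. Its sign is $\sigma(W_0)\sigma(W)\sigma(P)\sigma(\overline{P})=(-1)(-1)(+1)=+1$, since $W_0$ and $W$ are negative closed walks (they traverse an unbalanced cycle) and $\sigma(P)=\sigma(\overline{P})$.

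Since $f$ is a $G$-tension, the second part of Proposition~\ref{prop.technical_circuits} applied to $W^\ast$ gives $f(W^\ast)=0$. The key reduction step is the observation, already used at the end of the proof of Proposition~\ref{prop.technical_circuits}, that for any walk $V=(v_1,e_1,\ldots,v_k,e_k,v_{k+1})$ the height $f(V)$ and the unsigned sum $\sum_{i=1}^k f(e_i)$ lie in the same coset of $2G$, because each term $\sigma(V^{(i)})\omega(v_i,e_i)f(e_i)=\pm f(e_i)$ and $-g+2G=g+2G$ for every $g\in G$. Applying this to the four concatenated pieces of $W^\ast$ and noting that the edge-contributions of $P$ and $\overline{P}$ are identical, yields
\[
0 \;=\; f(W^\ast) \;\equiv\; \sum_{e\in W_0} f(e) \;+\; 2\!\sum_{e\in P} f(e) \;+\; \sum_{e\in W} f(e) \;\equiv\; \sum_{e\in W_0} f(e) + \sum_{e\in W} f(e)\pmod{2G}.
\]
Therefore $\sum_{e\in W} f(e)\equiv -\sum_{e\in W_0} f(e)\equiv \sum_{e\in W_0} f(e)\pmod{2G}$, and we may take $u:=\sum_{e\in W_0} f(e)$.

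The main obstacle is essentially cosmetic: verifying that the concatenated walk $W^\ast$ is indeed closed and positive, so that the extended tension condition from Proposition~\ref{prop.technical_circuits} applies. The substantive content has already been done in proving that proposition; here it is only being leveraged via the identification of the height modulo $2G$ with the plain edge-sum, together with the observation that a $P\ast\overline{P}$ detour contributes an element of $2G$ in this coset calculation.
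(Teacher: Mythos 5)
Your proof is correct and is essentially the same as the paper's: both form the positive closed walk obtained by concatenating the two unbalanced-cycle walks via a connecting path and its reverse, apply Proposition~\ref{prop.technical_circuits} to get vanishing height, and reduce modulo $2G$ using $-g+2G=g+2G$. One small slip: the fact you need ($f(W^\ast)=0$ for a \emph{positive} closed walk) is the first bullet of Proposition~\ref{prop.technical_circuits}, not the second, which concerns potential differences.
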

%

\begin{proof}
We use notation introduced at the beginning of the proof of Proposition~\ref{prop.technical_circuits}. If $X=(v_1,e_1,\dots, v_k,e_k,v_1)$ and $X'=(v_1',e_1',\dots, v_\ell',e_\ell',v_{1}')$ are walks traversing two unbalanced cycles in the same connected component of $\Sigma$ and $Y$ a path from the end $v_1$ of $X$ to the start $v_1'$ of $X'$, then the concatenation $W=X*Y*X'*\overline{Y}$ is a positive closed walk. By Proposition~\ref{prop.technical_circuits}, the $G$-tension $f$ then satisfies $f(W)=0$ (equation~\eqref{eq:def_ten_2}).
	Using~\eqref{eq:reverse}
\begin{align*}0=f(W)&=f(X)-f(Y)-\sigma(Y)f(X')+\sigma(Y)f(\overline{Y})\\
& =f(X)-\sigma(Y)f(X')-2f(Y),\end{align*}
which, as $-u+2G=u+2G$ for any $u\in G$,  implies
$$\sum_{i=1}^kf(e_i)-\sum_{j=1}^\ell f(e_j')\in 2G.$$ 
This in turn implies $\sum_{i=1}^kf(e_i)$ belongs to the same coset of $2G$ as $\sum_{j=1}^\ell f(e_j')$.
\end{proof}

\subsection{Enumeration of (nowhere-zero) $G$-tensions and (nowhere-zero) $G$-potential differences}\label{sec:num_tensions_pot_diff}

We let $t_\Sigma^0(G)$, $t_\Sigma(G)$, $p_\Sigma^0(G)$ and  $p_\Sigma(G)$ denote the number of $G$-tensions, nowhere-zero $G$-tensions, $G$-potential differences and nowhere-zero $G$-potential differences, respectively.

Much as a $G$-tension of a connected graph $\Gamma$ is uniquely determined by its values on a spanning tree of $\Gamma$, so that there are clearly $|G|^{|V|-k(\Gamma)}$ $G$-tensions of $\Gamma$, an analogous statement holds for $G$-tensions and $G$-potential differences of a connected signed graph, proved as Theorem~\ref{theorem:tensionscircuit} below, with the notion of a connected basis of a signed graph replacing that of spanning tree of a graph.


\begin{definition}Let $\Sigma=(\Gamma,\sigma)$ be a connected signed graph. 
	A {\em connected basis} of $\Sigma$ is a spanning tree of $\Gamma$ when $\Sigma$ is balanced, and, when $\Sigma$ is unbalanced, the union of a spanning tree $T$ of $\Gamma$ and an edge $e$ such that the unique cycle in the subgraph $T \cup \{e\}$ is unbalanced in $\Sigma$.
		A \emph{connected basis} of the disjoint union of connected signed graphs is the union of connected bases of the connected components.  
\end{definition}

For a balanced connected signed graph, all bases (maximal subsets of edges containing no circuit) are connected so the qualifier ``connected" in ``connected basis" is redundant in this case. However, for unbalanced signed graphs, maximal subsets of edges containing no circuit may be disconnected. In fact, any forest of connected bases of unbalanced induced subgraphs of $\Sigma$ that together cover all the vertices of $\Sigma$ forms a basis as the addition of any edge forms either a balanced cycle or a handcuff. 

\begin{lemma}\label{lemma:specialbasis_2}
	A $G$-tension of a connected signed graph $\Sigma$ is uniquely determined by its values on the edges of a connected basis (any choice of values on the basis determines a unique tension on the whole of $\Sigma$).
\end{lemma}

Lemma~\ref{lemma:specialbasis_2} can be proven using results in~\cite{chen09}; we discuss this connection and give an alternative proof in Appendix~\ref{sec:app_fig_lemma}.

\begin{theorem}\label{theorem:tensionscircuit}
The number 
of $G$-tensions of a signed graph $\Sigma=(\Gamma = (V,E),\sigma)$ is given by $$t_\Sigma^0(G)=|G|^{|V|-k_b(\Sigma)}.$$ The number of $G$-potential differences is given by $$p_\Sigma^0(G)=|G|^{|V|-k(\Sigma)}|2G|^{k_u(\Sigma)}.$$
\end{theorem}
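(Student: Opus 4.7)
I would prove the two formulae by structural arguments, treating $p_\Sigma^0(G)$ first and then deducing $t_\Sigma^0(G)$ from it via Proposition~\ref{fact.1}.

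\emph{Potential differences.} Fix a compatible orientation $\omega$ of $\Sigma$ and define the homomorphism $\delta \colon G^V \to G^E$ by $(\delta g)(e) = \sum_{(w,e)} \omega(w,e) g(w)$, the sum running over the half-edges of $e$; for a non-loop $e = uv$ this is $\omega(u,e) g(u) + \omega(v,e) g(v)$, while for a loop at $v$ it equals $0$ if $e$ is positive and $\pm 2 g(v)$ if $e$ is negative. The plan is to identify $\mathrm{image}(\delta)$ with the set of $G$-potential differences. That every $\delta g$ is a potential difference is a direct check from Definition~\ref{def:tensions_2}. Conversely, given a potential difference $f$, construct a preimage $g$ component by component by propagating along paths of $\Gamma$; the tension property (Proposition~\ref{prop.technical_circuits}) guarantees path-independence on positive closed walks, while the $2G$-condition on negative closed walks translates to a solvable equation $2 g(v_0) = f(W)$ at the base vertex, yielding $|G|/|2G|$ choices for $g(v_0)$ in each unbalanced component and $|G|$ choices in each balanced one. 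The kernel of $\delta$ consists of $g$ with $g(u) = g(v)$ on positive edges and $g(u) = -g(v)$ on negative edges, i.e., $g$ constant on each balanced component and equal to a single $2$-torsion element on each unbalanced component. Hence $|\ker \delta| = |G|^{k_b(\Sigma)} (|G|/|2G|)^{k_u(\Sigma)}$, and the first isomorphism theorem gives $p_\Sigma^0(G) = |G|^{|V|-k(\Sigma)} |2G|^{k_u(\Sigma)}$.

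\emph{Tensions.} Define the \emph{shift map} $\psi \colon \{G\text{-tensions of } \Sigma\} \to (G/2G)^{k_u(\Sigma)}$ by sending $f$ to the tuple of cosets $u_i + 2G$ from Proposition~\ref{fact.1}, one per unbalanced component $C_i$. Then $\psi$ is a group homomorphism whose kernel is exactly the $G$-potential differences by \eqref{eq:def_ten_3}. Proving $\psi$ surjective will then yield $t_\Sigma^0(G) = (|G|/|2G|)^{k_u(\Sigma)} \cdot p_\Sigma^0(G) = |G|^{|V|-k_b(\Sigma)}$. For surjectivity, given any $u_i \in G$ construct a $G$-tension supported on $C_i$ with shift $u_i + 2G$ as follows: after switching vertices (Proposition~\ref{prop:tension_switch}) so that a spanning tree $T_i$ of $C_i$ is all-positive, pick $e_i^* \in E(C_i) \setminus T_i$ whose fundamental cycle in $T_i \cup \{e_i^*\}$ is unbalanced, which exists because $C_i$ is unbalanced; then $T_i \cup \{e_i^*\}$ is a basis of $F(\Sigma)|_{C_i}$. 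Set $f(e_i^*) = u_i$, $f \equiv 0$ on $T_i$, and extend to each non-basis edge $e$ via the circuit equation of the fundamental circuit of $e$ in $F(\Sigma)$ with respect to this basis, in which $e$ occurs with coefficient $\pm 1$. The unbalanced fundamental cycle through $e_i^*$ then has $f$-height $\pm u_i$, so the shift is $u_i + 2G$, as required.

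\textbf{Main obstacle.} The delicate step is verifying that the function $f$ produced by the surjectivity construction really is a $G$-tension, not merely a solution of the fundamental circuit equations. This reduces to the matroid-theoretic claim that every circuit of $F(\Sigma)$ is a $\Z$-linear combination of the fundamental circuits with respect to $T_i \cup \{e_i^*\}$. Although unbalanced-handcuff circuits carry $\pm 2$ coefficients on their path edges, each non-basis edge appears with coefficient $\pm 1$ in its own fundamental circuit, so successive integer elimination expresses any circuit equation as an integer combination of the fundamental ones, making the argument uniform in $G$.
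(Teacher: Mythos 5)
Your proposal is correct, and it reaches both formulas by a genuinely different route from the paper. The paper proves Theorem~\ref{theorem:tensionscircuit} twice: once by exhibiting a deletion--contraction recurrence for $p_\Sigma^0(G)$ and $t_\Sigma^0(G)$ and feeding it into the Recipe Theorem (Theorem~\ref{theorem:recipe2}), where the subset expansion collapses to $A=\emptyset$; and once structurally, by using Lemma~\ref{lemma:specialbasis} to identify $G$-tensions of a connected $\Sigma$ with $G$-valued functions on a connected basis ($|V|-1$ or $|V|$ edges according as $\Sigma$ is balanced or unbalanced), with the coset condition~\eqref{eq:def_ten_3} cutting the $|G|$ choices on the extra basis edge down to $|2G|$ for potential differences. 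You instead compute $p_\Sigma^0(G)$ as $|\mathrm{image}\,\delta|$ via the first isomorphism theorem --- in effect proving Theorems~\ref{theorem:kernel} and~\ref{cor:tension_coloring} first and importing the machinery of Section~\ref{sec:ten_col} --- and then recover $t_\Sigma^0(G)$ from the exact sequence given by your shift map $\psi$ onto $(G/2G)^{k_u(\Sigma)}$, whose kernel is the group of potential differences by Proposition~\ref{fact.1}. What this buys is a structural explanation of the index $(|G|/|2G|)^{k_u(\Sigma)}$ of potential differences inside tensions, which the paper obtains only as a numerical ratio; the cost is that you still need the connected-basis extension property (Lemma~\ref{lemma:specialbasis}) for the surjectivity of $\psi$, plus the coloring machinery up front, so the argument is longer. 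Your ``main obstacle'' is real and is essentially the surjectivity half of Lemma~\ref{lemma:specialbasis}; your resolution works, and the cleanest way to close it is the one you hint at: the $|E|-|B|$ fundamental circuit vectors are linearly independent (each owns a $\pm 1$ coordinate) and lie in the orthogonal complement of the rational tension space, which your $\delta$-computation over a torsion-free group shows has dimension exactly $|E|-|B|$; hence every circuit vector is a rational, and then by the private-coordinate elimination an integral, combination of the fundamental ones, which is what is needed for arbitrary finite $G$. Two small imprecisions worth fixing in a write-up: elements of $\ker\delta$ are not constant on a balanced component but alternate in sign along negative edges (the count of $|G|$ per balanced component is still right), and for a negative loop at $v$ your formula gives $(\delta g)(e)=2\omega(v,e)g(v)$, which is consistent with compatibility~\eqref{eq.two_half_orient_cond}.
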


\begin{proof}
By multiplicativity of the number of $G$-tensions and $G$-potential differences of a signed graph over disjoint unions, it suffices to prove that for a connected signed graph $\Sigma$ on vertex set $V$ the number of $G$-tensions is given by
\begin{displaymath}t_\Sigma^0(G)=\left\{\begin{array}{ll}|G|^{|V|-1} & \text{if $\Sigma$ is balanced,}\\|G|^{|V|} & \text{if $\Sigma$ is unbalanced}\end{array}\right.
\end{displaymath} 
and the number of $G$-potential differences is given by
\begin{displaymath}p_\Sigma^0(G)=\left\{\begin{array}{ll}|G|^{|V|-1} & \text{if $\Sigma$ is balanced,}\\|G|^{|V|-1}|2G| & \text{if $\Sigma$ is unbalanced}.\end{array}\right.
\end{displaymath} 
By Lemma~\ref{lemma:specialbasis_2}, there is a bijection between $G$-tensions of $\Sigma$ and $G$-valued functions on a connected basis. If a connected signed graph $\Sigma=(\Gamma,\sigma)$ with underlying graph $\Gamma=(V,E)$ is balanced, then a basis has $|V|-1$ edges, while if it is unbalanced, then a basis has $|V|$ edges. The additional coset condition implies the $|2G|$ factor in the $G$-potential differences instead of the $|G|$ factor for the $G$-tensions since a basis contains an unbalanced cycle $X$. 
While Lemma~\ref{lemma:specialbasis_2} does not directly give that this tension will automatically be a potential difference, it does follow from it. Indeed if there is another unbalanced cycle $X'$ in $\Sigma$ then we can traverse $X$ and $X'$ by walks and appeal to Proposition~\ref{fact.1} to show that the sum of the tension values on the two unbalanced cycles belongs to $2G$. 
\end{proof}

Although Theorem~\ref{theorem:tensionscircuit} gives a formula for the number of $G$-tensions, the usual route of inclusion-exclusion in order to obtain a subset expansion for the number of nowhere-zero $G$-tensions is not available to us as contraction of negative edges has not been defined for signed graphs.
\footnote{Recall that for a graph $\Gamma=(V,E)$ the number of $G$-tensions of  $\Gamma$ is equal to $|G|^{r(\Gamma)}$ so that by inclusion-exclusion the number of nowhere-zero $G$-tensions of $\Gamma$ is 
$$\sum_{A\subseteq E}(-1)^{|A|}|G|^{r(\Gamma/A)}=(-1)^{r(\Gamma)}T_\Gamma(1-|G|,0).$$
Here we use the fact that $r(\Gamma/A)=|A|-r(\Gamma\backslash A^c)$ and that a tension of $\Gamma/A$ corresponds to a unique tension of $\Gamma$ that is zero on $A$ (dual to the fact that a flow of $\Gamma\backslash A$ corresponds to a unique flow of $\Gamma$ that is zero on $A$).} 
For nowhere $G$-flows of signed graphs we were able to use inclusion-exclusion -- see Theorem~\ref{theorem:G-flows}; for proper $G$-colorings too, we relied on the inclusion-exclusion formula used to derive Theorem~\ref{theorem:Xcolorings} to obtain the number of proper $G$-colorings~\ref{thm:gcolors} as an evaluation of the trivariate Tutte polynomial. (Once we have explained the connection between colorings and tensions, we shall see that this in fact gives the number of nowhere-zero $G$-potential differences -- see Corollary~\ref{cor:num_c_tens} below.)

In order to enumerate nowhere-zero $G$-tensions we shall establish a deletion-contraction recurrence, but in order to do so we need to partition tensions into classes. 
Let
 $p_{\Sigma}(G;u)$ denote the number of nowhere-zero $G$-tensions such that for every walk $W=(v_1,e_1,\ldots,e_k,v_1)$ around an unbalanced cycle of $\Sigma$ 
\begin{displaymath}
	\sum_{i=1}^k f(e_i)\in u+2G.
	\end{displaymath}
By Proposition~\ref{fact.1}, for a connected signed graph $\Sigma$ each  nowhere-zero $G$-tension falls into one of these classes, $p_{\Sigma}(G;u)=p_\Sigma(G)$ when $u\in 2G$, while $\sum_{u\in G}p_{\Sigma}(G;u)=|2G|t_\Sigma(G)$ (restricting the range of $u$ in the sum to a transversal of cosets of $2G$ gives $t_\Sigma(G)$).

\begin{theorem}\label{theorem:nz-tensions}
Let $\Sigma=(\Gamma,\sigma)$ be a signed graph and $G$ a finite additive abelian group. 
Then the number of nowhere-zero $G$-potential differences of $\Sigma$ is given by
\begin{equation}
\label{equation:potential_differences}
p_\Sigma(G)=
(-1)^{r(\Gamma)}|2G|^{k_u(\Sigma)}T_{\Sigma}\Big(1-|G|,0,1-\frac{1}{|2G|}\Big),
\end{equation}
and, for $u\not\in 2G$, 
$$p_\Sigma(G;u)=(-1)^{r(\Gamma)}|2G|^{k_u(\Sigma)}T_{\Sigma}(1-|G|,0,1). 
$$

In particular, 
when $\Sigma$ is connected and unbalanced, the number of nowhere-zero $G$-tensions of $\Sigma$ is given by
$$t_\Sigma(G)=(-1)^{r(\Gamma)}|2G|\left[T_{\Sigma}\Big(1-|G|,0,1-\frac{1}{|2G|}\Big)+\left(\frac{|G|}{|2G|}-1\right)T_{\Sigma}(1-|G|,0,1)\right].$$
\end{theorem}

\begin{remark} When $\Sigma$ is balanced, $G$-tensions of $\Sigma$ are $G$-potential differences of $\Sigma$ and correspond to $G$-tensions of $\Gamma$; when $\Sigma$ is balanced $T_{\Sigma}(X,Y,Z)=T_{\Gamma}(X,Y)$ when $Z\neq 1$ and $T_{\Sigma}(X,Y,1)=0$ (as can be verified from the subset expansion for the trivariate Tutte polynomial).  So $t_\Sigma(G)=p_\Sigma(G)=(-1)^{r(\Gamma)}T_{\Gamma}(1-|G|,0)$ when $\Sigma$ is balanced.
	\end{remark} 

\begin{proof}
	We claim that for each $u\in G$ the invariant $p_{\Sigma}(G;u)$ satisfies the following deletion-contraction recurrence: For positive edge~$e$,
\begin{displaymath}
p_{\Sigma}(G;u)=
\left\{
\begin{array}{lp{6cm}}
p_{\Sigma\backslash e}(G;u)- p_{\Sigma/ e}(G;u) & \parbox[t]{0.45\textwidth}{if $e$ is ordinary in $\Gamma$ and in $\Sigma$,}\\
|2G| p_{\Sigma\backslash e}(G;u) - p_{\Sigma/ e}(G;u)  & \parbox[t]{0.6\textwidth}{if $e$ is ordinary in $\Gamma$ and  $k_u(\Sigma\backslash e)<k_u(\Sigma)$,}\\
\frac{|G|}{|2G|}p_{\Sigma\backslash e}(G;u) - p_{\Sigma/ e}(G;u)
& \parbox[t]{0.6\textwidth}{if $e$ is a bridge in $\Gamma$ and a circuit path edge in $\Sigma$,}\\
(|G|-1) p_{\Sigma\backslash e}(G;u) & \parbox[t]{0.5\textwidth}{if $e$ is a bridge in $\Gamma$ that is not a circuit path edge in $\Sigma$,}\\
0  & \parbox[t]{0.6\textwidth}{if $e$ is a loop in $\Gamma$ and in $\Sigma$ (positive loop),}\\
\end{array}\right.
\end{displaymath}
and for a vertex with $\ell\geq 1$ negative loops we have
\begin{displaymath}
p_{\Sigma}(G;u)|=\begin{cases}
|2G|-1 & \text{ $u\in 2G$}\\
|2G| & \text{ $u\not\in  2G$}.\\
\end{cases}
\end{displaymath}
Then the result will follow by Theorem~\ref{theorem:recipe2} with $(\alpha,\beta,\gamma,x,y,z)=(-1,1,|2G|,|G|-1,0,|2G|(-1))$.

It suffices to consider connected $\Sigma$.
We take the cases in turn, letting $f':E\backslash\{e\}\to G\backslash 0$ be a nowhere-zero $G$-tension of $\Sigma\backslash e$ with sum in $u+2G$ on unbalanced cycles, and counting how many ways $f'$ can be extended to a nowhere-zero $G$-tension $f:E\to G\backslash 0$ of $\Sigma$ also with sum in $u+2G$ on unbalanced cycles.  

When $e$ is ordinary in $\Gamma$ and in $\Sigma$, the edge $e$ either appears in some balanced cycle $Y$ (a circuit of both $\Sigma$ and $\Gamma$) or in an unbalanced cycle of a handcuff $Y'$ (the cycle is a circuit of $\Sigma$ and the handcuff, loose or tight, is a circuit of $\Gamma$). The value of $f(e)$ is by Lemma~\ref{lemma:specialbasis_2} uniquely determined by the partial tension $f'$ on $\Sigma\backslash  e$. 
Those tensions $f$ for which $f(e)=0$ correspond to tensions $f'':E\backslash\{e\}\to G$ of $\Sigma /e$. 
Hence $p_\Sigma(G;u)=p_{\Sigma\backslash e}(G;u)-p_{\Sigma/e}(G;u)$ in this case. 

When $e$ is ordinary in $\Gamma$ and  $k_u(\Sigma\backslash e)<k_u(\Sigma)$ it belongs to each unbalanced cycle of $\Sigma$ (of which there is at least one). For such an unbalanced cycle $X$, the condition $\sum_{e'\in X}f(e')\in u+2G$ determines $|2G|$ possible values of $f(e)$; choices $f(e)=0$ correspond to tensions of $\Sigma/e$. These conditions are compatible for all unbalanced cycles as two such cycles together form a positive closed walk.
Since there are no unbalanced cycles in $\Sigma\backslash e$, the only circuits of $\Sigma$ are balanced cycles. The positive edge $e$ cannot both belong to an unbalanced cycle and to a balanced cycle, for otherwise there would be an unbalanced cycle not containing $e$. Therefore there are no further constraints on $f(e)$ beyond the sum of values on the cycle $X$ being in $u+2G$.  Subtracting for when the choice $f(e)=0$ is available (corresponding to a tension of $\Sigma/e$), we thus obtain $p_{\Sigma}(G;u)=|2G|p_{\Sigma\backslash e}(G;u)-p_{\Sigma/e}(G;u)$ in this case.

When $e$ is a bridge of $\Gamma$ and circuit path edge, then there is a circuit $Y$ of $\Sigma$ containing $e$ and equation~\eqref{eq:def_ten_2} for a circuit walk around $Y$ determines $2f(e)$, and -- this is where the condition that $f$ summed on each unbalanced cycle belongs to a fixed coset $u+2G$ is required -- this makes $\frac{|G|}{|2G|}$ choices for $f(e)$. (If two unbalanced cycles contained in a loose handcuff had incongruent sums modulo $2G$ then equation~\eqref{eq:def_ten_2} could not be satisfied for the circuit walk  around this loose handcuff.) Adjusting again for the possibility $f(e)=0$, this yields $p_{\Sigma}(G;u)= \frac{|G|}{|2G|}p_{\Sigma\backslash e}(G;u) - p_{\Sigma/ e}(G;u)$ in this case.

When $e$ is a bridge of $\Gamma$ that is not a circuit path edge, there are no circuits of $\Sigma$ containing $e$ and thus there are $|G|-1$ non-zero choices for $f(e)$ to extend the tension $f'$ of $\Sigma\backslash e$ to a tension of $\Sigma$ (all the while keeping the sum on any unbalanced cycles an element of $u+2G$). Thus $p_\Sigma(G;u)=(|G|-1)p_{\Sigma\backslash e}(G;u)$ in this case.

When $e$ is a positive loop it forms a circuit of $\Sigma$ and the equation~\eqref{eq:def_ten_2} for a circuit walk around it determines that $f(e)=0$. Hence $p_{\Sigma}(G;u)=0$ in this case.

Finally, when $\Sigma$ consists of a vertex with $\ell\geq 1$ negative loops the value on one loop determines all the others to have the same value by equation~\eqref{eq:def_ten_2} for circuit walks around pairs of loops. The value on a loop must also belong to $u+2G$. This implies $p_{\Sigma}(G;u)=|2G|$ when $u\not\in 2G$ and $p_{\Sigma}(G;u)=|2G|-1$ when $u\in 2G$.

This completes the proof of the recurrence and the Recipe Theorem yields the result.
\end{proof}

\begin{remark}
For graphs, (nowhere-zero) $G$-tensions are dual to (nowhere-zero) $G$-flows in the sense that tensions of the oriented cycle matroid $M(\Gamma)$ of $\Gamma$ are flows of the dual oriented cycle matroid $M(\Gamma)^*$ (isomorphic to $M(\Gamma^*)$ for planar $\Gamma$). This is reflected in the fact that the number of nowhere-zero $G$-tensions is equal to $(-1)^{r(M(\Gamma))}T_{M(\Gamma)}(1-|G|,0)$ and the number of nowhere-zero $G$-flows is equal to $(-1)^{|E|-r(M(\Gamma))}T_{M(\Gamma)}(0,1-|G|)$, which by the duality formula
 for the Tutte polynomial is equal to $(-1)^{r(M(\Gamma)^*)}T_{M(\Gamma)^*}(1-|G|,0)$.

In a similar way, the number of nowhere-zero $G$-potential differences given in Theorem~\ref{theorem:nz-tensions} is equal to 
$$(-1)^{r(\Gamma)}|2G|^{k_u(\Sigma)}T_{\Sigma}\left(1-|G|,0,1-\frac{1}{|2G|}\right)=|2G|^{r_F(E)}S_{M, F}\Big(1-|G|,0,1-\frac{1}{|2G|}\Big)$$
where $M=M(\Gamma)$ and $F=F(\Sigma)$, and the number of nowhere-zero $G$-flows of $\Sigma$ is equal to
\begin{align*}(-1)^{|E|-r(\Gamma)}T_{\Sigma}\left(0,1-|G|,1-\frac{|G|}{|2G|}\right)&=(-1)^{|E|-r_M(E)}|2G|^{r_M(E)}S_{M,F}\left(0,1-|G|,1-\frac{|G|}{|2G|}\right)\\
  &=|2G|^{|E|-r_{F}(E)}S_{M^*, F^*}\left(1-|G|,0,1-\frac{1}{|2G|}\right)\end{align*}
the last line by the duality formula for $S_{M(\Gamma),F(\Sigma)}(X,Y,Z)$ given by Equation~\eqref{eq:dualityS} in Appendix~\ref{app:matroids}.

What remains unclear, though, is how to define $G$-flows and $G$-potential differences for the dual of frame matroids. Tensions and flows of (the cycle matroid of) a graph have a smooth translation to cographic matroids; but for frame matroids it is not apparent, for example, how the condition on $G$-potential differences that sums on unbalanced cycles belong to $2G$ translates to the dual setting.  

\end{remark}
While $t_\Sigma(G)$ for connected $\Sigma$ is a sum of evaluations of the trivariate Tutte polynomial at two different points, and is multiplicative over disjoint unions, 
the invariant $t_\Sigma(G)$ does not itself satisfy a deletion-contraction recurrence. 
To see this, it suffices to consider the case of edge $e$ being a bridge in $\Gamma$ that belongs to a loose handcuff of $\Sigma$. 
We introduce notation for a number of signed graphs: $\Sigma_0$, consisting of a single vertex with one negative loop, and $\Sigma_0'$ a single vertex with two negative loops;
$\Sigma_1$,  consisting of two vertices connected by an edge with one negative loop on each vertex, and $\Sigma_1'$ two vertices connected by an edge, one negative loop on one, and two negative loops on the other; $\Sigma_2$, consisting of a path on three vertices with a negative loop on each vertex.
Supposing there exist $\lambda$ and $\mu$ (possibly depending on $G$) such that $t_{\Sigma}(G)= \lambda t_{\Sigma/e}(G)+\mu t_{\Sigma\backslash  e}(G)$, 
		then we have 
	\begin{displaymath}
	t_{\Sigma_1}(G)=\lambda t_{\Sigma_0'}(G)+\mu t_{\Sigma_0}(G)^2=\lambda t_{\Sigma_0}(G)+\mu t_{\Sigma_0}(G)^2
	\end{displaymath}
	and
	\begin{displaymath}
	t_{\Sigma_2}(G)=\lambda t_{\Sigma_1'}(G)+\mu t_{\Sigma_1}(G)\:t_{\Sigma_0}(G)=\lambda t_{\Sigma_1}(G)+\mu t_{\Sigma_1}(G)\:t_{\Sigma_0}(G)\: .
	\end{displaymath}
From these two equations it follows that 
	\begin{equation}\label{eq.equality}
	t_{\Sigma_1}(G)^2=t_{\Sigma_0}(G)\:t_{\Sigma_2}(G),
	\end{equation}
	but a direct computation shows
	\begin{displaymath}
	\begin{cases}
	t_{\Sigma_0}(G)=|G|-1 \\
	t_{\Sigma_1}(G)=(|G|-|2G|)(|G|-1) + (|2G|-1)\left(|G|-\frac{|G|}{|2G|}-1\right)\\
	t_{\Sigma_2}(G)=(|G|-|2G|)(|G|-1)^2 + (|2G|-1)\left(|G|-\frac{|G|}{|2G|}-1\right)^2\\
	\end{cases}
	\end{displaymath}
	and for $|2G|=2$ and $\frac{|G|}{|2G|}=2^5$ (for example)  equality~\eqref{eq.equality} does not hold.

\subsection{Tensions and colorings} \label{sec:ten_col}


\indent We describe a relation between tensions and colorings, thereby extending Theorem~5.1 in~\cite{chen09} (where only abelian groups of odd order are considered). As usual, $G$ is a finite additive abelian group, and $\Sigma=(\Gamma,\sigma)$ is a signed graph whose underlying graph $\Gamma=(V,E)$ has been given an orientation  $\omega$ compatible with $\sigma$. Define the \emph{difference operator} $\delta: G^V \to G^E$ with respect to the orientation $\omega$ (sometimes we will make the dependence explicit by writing $\delta=\delta_\omega$)  for $g\in G^V$ by
\begin{equation}\label{equation:delta}(\delta g)(e) := \omega(v,e)g(v) + \omega(u,e)g(u).\end{equation}


Changing the orientation of $\omega$ on an edge $e$ has the effect of replacing $\delta g(e)$ by $- \delta g(e)$.
The image of the operator $\delta$ is unchanged under vertex switching, and the kernel of $\delta$ is unchanged up to isomorphism. To see this, it suffices to consider the effect of switching $\Sigma$ at a single vertex $u\in V$ to make equivalent signed graph $\Sigma'=(\Gamma,\sigma')$. Under switching at $u$, an orientation $\omega'$ of $\Sigma'$ compatible with $\sigma'$ may be obtained by changing the sign of the half-edges incident with $u$, i.e. by setting $\omega'(u,e)=-\omega(u,e)$ for each edge $e$ with $u$ as an endpoint, and $\omega'(v,e)=\omega(v,e)$ otherwise.  For given $g\in G^V$, if we define $g'$ by $g'(u)=-g(u)$ while setting $g'(v)=g(v)$ for $v\neq u$, then, for each edge $e=uv$ incident with $u$, 
\begin{align*}
(\delta_\omega g)(e)&=\omega(v,e)g(v)+\omega(u,e)g(u)\\
 & = \omega(v,e)g(v)-\omega(u,e)(-\!g(u))\\
 & =\omega'(v,e)g'(v)+\omega'(u,e)g'(u)  = (\delta_{\omega'} g')(e).
\end{align*} 
For edges not incident with $u$ it is clear that $(\delta_\omega g)(e)=(\delta_{\omega'} g')(e)$. Hence the image of $\delta$ under the orientation $\omega$ of $\Sigma$ is equal to the image of $\delta$ under the orientation $\omega'$ of $\Sigma'$;
likewise, $g\in\ker \delta$ under the given orientation $\omega$ of $\Sigma$ if and only if $g'\in \ker\delta$ under the orientation $\omega'$ of $\Sigma'$, and $g\mapsto g'$ is an isomorphism of $G^V$. 


\begin{theorem}\label{theorem:kernel}
Let $G$ be a finite additive abelian group and $\Sigma=(\Gamma,\sigma)$ a signed graph with underlying graph $\Gamma=(V,E)$. Then $\delta: G^V \rightarrow G^E$ is a group homomorphism under pointwise addition, whose image is contained in the group of $G$-tensions of $\Sigma$, and
\begin{displaymath}
\mathrm{ker}\hspace{0.5mm}\delta \cong G^{k_b(\Sigma)}\times G_2^{k_u(\Sigma)},
\end{displaymath}
where $G_2=\{x\in G: 2x=0\}$. 
\end{theorem}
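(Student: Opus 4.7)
That $\delta$ is a group homomorphism is immediate from the linearity of~\eqref{equation:delta} in $g$. For the image being contained in the group of $G$-tensions, by the first part of Proposition~\ref{prop.technical_circuits} it suffices to check~\eqref{eq:def_ten_2} for every positive closed walk $W=(v_1,e_1,\ldots,v_k,e_k,v_1)$. Writing $c_i:=\omega(v_i,e_i)\prod_{j<i}\sigma(e_j)$, substituting $(\delta g)(e_i)=\omega(v_i,e_i)g(v_i)+\omega(v_{i+1},e_i)g(v_{i+1})$ (with $v_{k+1}:=v_1$) and using $\omega(v,e)^2=1$ together with the compatibility identity $\omega(v_i,e_i)\omega(v_{i+1},e_i)=-\sigma(e_i)$ yields
\begin{displaymath}
c_i(\delta g)(e_i) = \textstyle\prod_{j<i}\sigma(e_j)\,g(v_i) - \prod_{j\le i}\sigma(e_j)\,g(v_{i+1}),
\end{displaymath}
so the sum $\sum_{i=1}^{k}c_i(\delta g)(e_i)$ telescopes to $g(v_1)-\prod_{j=1}^{k}\sigma(e_j)\,g(v_1)$, which vanishes precisely because $W$ is positive.

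For the kernel, note that $\delta g=0$ is equivalent (again using compatibility to eliminate $\omega$) to the relation $g(v)=\sigma(e)g(u)$ for every edge $e=uv$; for $e$ a negative loop at $v$ this reads $2g(v)=0$. Since these conditions are edge-local they decouple across connected components, and I treat each component separately. For a balanced connected component, the remark preceding the theorem lets me switch to an equivalent all-positive signature without changing the kernel up to isomorphism; the condition then reduces to \emph{$g$ constant on the component}, contributing one factor of $G$. For an unbalanced connected component, I choose a connected basis (Lemma~\ref{lemma:specialbasis}) consisting of a spanning tree $T$ rooted at some vertex $v_0$ together with one extra edge $e^{*}$ closing an unbalanced cycle through $v_0$. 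The tree relations force $g(v)=\sigma(P_v)g(v_0)$, where $P_v$ is the $v_0$--$v$ path in $T$, while the edge $e^{*}$ imposes the single further relation $2g(v_0)=0$. By Lemma~\ref{lemma:specialbasis} any $G$-valued function on the connected basis extends uniquely to a $G$-tension of the component, so these are the only constraints; hence each unbalanced component contributes a factor of $G_2=\{x\in G:2x=0\}$.

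Assembling the contributions from the $k_b(\Sigma)$ balanced and $k_u(\Sigma)$ unbalanced components gives $\ker\delta\cong G^{k_b(\Sigma)}\times G_2^{k_u(\Sigma)}$. The main subtle step is the unbalanced case: one has to be sure that the single $2$-torsion condition at $v_0$ really does suffice, i.e.\ that every other closed walk at $v_0$ gives no new constraint on $g(v_0)$. The cleanest way to see this is the connected-basis argument above, rather than a direct path-by-path consistency check; alternatively one may observe that every closed walk in the component is either positive (yielding the tautology $g(v_0)=g(v_0)$) or negative (yielding $2g(v_0)=0$, the same condition as from $e^{*}$).
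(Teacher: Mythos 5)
Your proposal is correct and follows essentially the same route as the paper: reduce to connected components, propagate $g$ along a spanning tree, and use a connected basis together with Lemma~\ref{lemma:specialbasis} to see that the one extra edge of an unbalanced component contributes exactly the relation $2g(v_0)=0$. The only cosmetic differences are that you verify the image claim by an explicit telescoping sum over positive closed walks (the paper defers this to Chen--Wang) and you certify sufficiency of the basis constraints via uniqueness of tension extension, where the paper instead observes directly that adding a constant from $G_2$ leaves $\delta g$ unchanged.
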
 
\begin{proof}
It is not difficult to see that $\delta$ is a group homomorphism and that $\delta g$ is a $G$-tension for every $g \in G^V$ (see also the first part of the proof of Theorem~5.1 in~\cite{chen09}, which holds for any finite abelian group $G$).\\
\indent We now determine the kernel of $\delta$. It suffices to prove that, when $\Sigma$ is connected,
\begin{equation}\label{equation:kerdelta}
\textrm{ker}\hspace{0.5mm}\delta \cong \left\{\begin{array}{ll} G & \text{if $\Sigma$ is balanced,}\\ G_2 & \text{if $\Sigma$ is unbalanced.}\end{array}\right.
\end{equation}
By Lemma~\ref{lemma:specialbasis_2}, the value of a tension $f$ is determined by its values on a connected basis. 

Suppose that $\Sigma$ is balanced. Then, a connected basis is a spannning tree. Let $T$ be such a tree. Let $g\in \textrm{ker}\hspace{0.5mm}\delta$. Consider a vertex $u$. Let $e=uv$ be an edge in $T$ with $\sigma(e)=1$. Then the vertex $v$ receives the value $g(u)$ by \eqref{equation:delta}. If $\sigma(e)=-1$, then $v$ receives the value $-g(u)$. In general a vertex $w$ receives the value $g(u)$ or $-g(u)$ depending on the parity of the number of negative edges on the unique path in $T$ from $u$ to $w$. Thus, each $g$ depends on the value on a given vertex, and such value can always be freely chosen, and the values on the other vertices is propagated through a  spanning tree. As the component is balanced, the assignment is consistent.


It remains to prove~\eqref{equation:kerdelta} when $\Sigma$ is unbalanced. 

In one direction,  take $g \in G^V$ and for $x\in G_2$ define $g_x\in G^V$ by $g_x(v) := g(v)+x$. Then $\delta g_x = \delta g$, showing that $\text{ker}\hspace{0.5mm}\delta$ contains a subgroup isomorphic to $G_2$.

In the other direction, let $B = T\cup\{e'\}$ be a connected basis of $\Sigma$, in which $T$ is a spanning tree of the underlying graph $\Gamma$. By Lemma~\ref{lemma:specialbasis_2}, we know that a tension is uniquely determined by its values on $B$. 
Assume, without loss of generality, that $\sigma(e')=-1$ (take $e'$ to be an edge signed $-1$ in the unique unbalanced cycle of $B$). 
Take $g\in \textrm{ker}\hspace{0.5mm}\delta$ and consider the value of $g$ on a vertex $v$. As in the balanced case, if $e\in T$, $e=uv$ and $\sigma(e)=+1$, then $g(v)=g(u)$, while if $\sigma(e)=-1$, then $g(v)=-g(u)$, due to (\ref{equation:delta}) and the assumption $g\in \textrm{ker}\hspace{0.5mm}\delta$. In particular, the value of $g$ at each vertex is either $g(u)$ or $-g(u)$ depending on the number of edges signed $-1$ on the unique path in $T$ from $u$ to $v$. Now consider the unique unbalanced cycle containing $e'=u'v'$. Since $\sigma(e')=-1$ and the cycle is unbalanced, the path from $u'$ to $v'$ has an even number of edges signed $-1$. Thus $g(u')=g(v')$.
Equation~\eqref{equation:delta} then states that
$0 = g(u')+g(v') = g(u')+g(u')$. Therefore, $g(u')\in G_2$, and the same is true of $g(u)=\pm g(u')$, and we are done.
%
%
%
\end{proof}

\noindent Equation~(\ref{equation:delta}) implies that if $g \in G^V$ is a $G$-coloring of $\Sigma$, then $\delta g$ is a nowhere-zero $G$-tension of~$\Sigma$.\\
\indent For a balanced signed graph $\Sigma$, the difference operator $\delta$ is surjective (\cite[Theorem 5.1]{chen09}). If $\Sigma$ is unbalanced, the image of $\delta$ has the following characterization. 

\begin{theorem}\label{cor:tension_coloring}
Let $\Sigma=(\Gamma,\sigma)$ be an unbalanced signed graph. Let $f$ be a $G$-tension of $\Sigma$. 
Then $f$ is in the image of $\delta$ 
if and only if for each unbalanced cycle $X$ of $\Sigma$ 
$$\sum_{e \in E(X)}f(e) \in 2G.$$
Thus, $f$ is in the image of $\delta$ if and only if it is a $G$-potential difference.
\end{theorem}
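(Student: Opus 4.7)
The plan is to prove the equivalence by establishing one inclusion directly and then closing the loop with a counting argument rather than by exhibiting preimages. First I would verify the forward implication: given $f = \delta g$ and a walk $W = (v_1, e_1, \ldots, v_k, e_k, v_1)$ around an unbalanced cycle, expand
\[
\sum_{i=1}^k (\delta g)(e_i) = \sum_{i=1}^k \bigl[\omega(v_i, e_i)\, g(v_i) + \omega(v_{i+1}, e_i)\, g(v_{i+1})\bigr]
\]
and collect terms by vertex, with $v_{k+1} = v_1$, obtaining $\sum_{i=1}^k [\omega(v_i, e_i) + \omega(v_i, e_{i-1})]\, g(v_i)$ under cyclic indexing. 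Because each $\omega(\cdot,\cdot) \in \{\pm 1\}$, every bracketed coefficient lies in $\{-2, 0, 2\}$, so each summand already sits in $2G$ and hence so does the total. This in fact works for any closed walk; in particular it gives $\sum_{e \in E(X)} f(e) \in 2G$ for every unbalanced cycle $X$, which (using $-u + 2G = u + 2G$ as noted in the proof of Proposition~\ref{prop.technical_circuits}) coincides with the defining condition for $f$ to be a $G$-potential difference. Thus $\mathrm{image}(\delta)$ is contained in the set of $G$-potential differences of $\Sigma$.

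For the reverse inclusion I would avoid an explicit construction of a preimage $g$ and instead compare cardinalities. By Theorem~\ref{theorem:kernel}, $|\ker \delta| = |G|^{k_b(\Sigma)} |G_2|^{k_u(\Sigma)}$, and since $G_2$ is the kernel of the homomorphism $x \mapsto 2x$ on $G$, we have $|G_2| = |G|/|2G|$. The first isomorphism theorem then gives
\[
|\mathrm{image}(\delta)| = \frac{|G|^{|V|}}{|\ker\delta|} = |G|^{|V|-k(\Sigma)}|2G|^{k_u(\Sigma)},
\]
which by Theorem~\ref{theorem:tensionscircuit} is exactly $p_\Sigma^0(G)$, the number of $G$-potential differences of $\Sigma$. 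Combined with the inclusion from the forward direction, this forces equality of the two sets and yields the stated biconditional.

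The main subtlety to handle carefully lies in the forward computation when the walk revisits vertices or traverses a loop: for a negative loop $e$ at $v$, compatibility forces both half-edges to carry the same orientation sign, so $(\delta g)(e) = 2\omega(v,e)g(v) \in 2G$ automatically, and more generally each occurrence of a vertex in the walk contributes a coefficient in $\{-2, 0, 2\}$ as required. Everything else is bookkeeping. The counting step is what makes the argument clean: a direct constructive proof would require picking a connected basis as in Lemma~\ref{lemma:specialbasis}, propagating a candidate $g$ along a spanning tree while fixing its value on an unbalanced cycle using the hypothesis $\sum_{e\in E(X)}f(e) \in 2G$ to solve $2g(v) = \sum f(e)$, and checking consistency across all further circuits, which the equality of cardinalities sidesteps entirely.
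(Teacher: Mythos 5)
Your forward direction is the same computation as the paper's: expand $\sum_{e\in E(X)}(\delta g)(e)$, collect by vertex, and observe each coefficient lies in $\{0,\pm 2\}$. For the converse, however, you take a genuinely different route. The paper argues constructively: after switching so that the unbalanced cycle $X$ has a single negative edge $e'=u'v'$, it solves $2x=\sum_{e\in E(X)}f(e)$ for a starting value $g(u')=x$, propagates $g$ around $X$ and through a connected basis, and invokes Lemma~\ref{lemma:specialbasis} to see that $\delta g$ and $f$ agree everywhere. You instead close the inclusion $\mathrm{image}(\delta)\subseteq\{G\text{-potential differences}\}$ by a cardinality count: $|\mathrm{image}(\delta)|=|G|^{|V|}/|\ker\delta|=|G|^{|V|-k(\Sigma)}|2G|^{k_u(\Sigma)}$ by Theorem~\ref{theorem:kernel} and the first isomorphism theorem, which equals $p^0_\Sigma(G)$ by Theorem~\ref{theorem:tensionscircuit}, forcing equality of the two finite sets. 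This is correct and free of circularity: Theorem~\ref{theorem:tensionscircuit} is established via Lemma~\ref{lemma:specialbasis} and Proposition~\ref{fact.1} (or the Recipe Theorem) without reference to $\delta$, and Theorem~\ref{theorem:kernel} likewise precedes this result. What each approach buys: yours is shorter and cleaner but non-constructive and leans on two prior enumerations, so it gives no procedure for recovering a coloring $g$ from a potential difference $f$; the paper's argument is longer but self-contained modulo Lemma~\ref{lemma:specialbasis} and exhibits the coloring explicitly, which is the form of the correspondence actually used later (e.g.\ in passing between proper $G$-colorings and nowhere-zero $G$-potential differences in Corollary~\ref{cor:num_c_tens}). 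One small point worth making explicit in your write-up: membership of $\mathrm{image}(\delta)$ in the set of $G$-tensions (not just the coset condition) is needed for your inclusion, and is supplied by Theorem~\ref{theorem:kernel}.
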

\begin{proof}
Suppose first that $f=\delta g$ for some $g \in G^V$. The terms of the sum
$$\sum_{e \in E(X)}(\delta g)(e)=\sum_{e=uv\in E(X)}\left[\omega(v,e)g(v)+\omega(u,e)g(u)\right]$$ contain for each $v\in V(X)$ two appearances of $g(v)$ with coefficient $\pm 1$, and therefore in total $g(v)$ has coefficient in $\{0,\pm 2\}$, and the overall sum is a multiple of $2$.
This proves that if $f$ is in the image of $\delta$, then the sum of its values around an unbalanced cycle is a multiple of $2$ in~$G$. 

Let us now prove the converse. For balanced connected components the statement follows from (\cite[Theorem 5.1]{chen09}). 
Consider an unbalanced connected component $\Sigma_0$ and let $X$ be an unbalanced cycle in this component. Let $B=X\cup B'$ be a connected basis for $\Sigma_0$ containing $X$ as its unbalanced cycle.
By vertex switching we may assume that there is a single edge $e'$ of $B$ that is negative, and that such an edge belongs to $X$, that is, $\sigma(e)=+1$ for each $e\in E(B)\backslash \{e'\}$ and $e'=u'v'$ has $\sigma(e') = -1$ and $e'\in E(X)$. 

Traversing $X$ as a cycle of $\Gamma$ in either direction gives a cyclic order on $V(X)$: fix one of these.  
Set the orientation $\omega$ of $X$ compatible with $\sigma$ so that 
$\omega(v,e)=-1, \omega(u,e)=+1$ for $e=uv\in E(X)\backslash\{e'\}$ in which $v$ follows $u$ when traversing $X$, and $\omega(v',e')=-1=\omega(u',e')$. 

For the given $G$-tension $f$ of $\Sigma$ we  
have by assumption $x\in G$ such that $\sum_{e\in E(X)}f(e) = 2x$. We construct $g \in G^V$ such that $\delta g  = f$. 
To start the vertex coloring, we set $g(u')=x$. We then use the tension $f$ to give each vertex of $V(X)$ a color as follows.  For $e'=u'v'$, with $v'$ following $u'$, we set $g(v')=-g(u')+f(e')=-x+f(e')$; after this, for $e=uv\in E(X)\backslash\{u'v'\}$, with $v$ following $u$, and $u$ colored, we set $g(v)=g(u)+f(e)$.
This coloring rule is consistent since $-x+\sum_{e\in E(X)}f(e)=-x+2x=x=g(u')$.  To color the remaining vertices, we use the fact that each vertex in $V(\Sigma)\setminus V(X)$ is connected to $V(X)$ by a unique path in $B$; directing these paths outwards from $V(X)$, for each edge $e=uv\in E(B)\setminus \{u'v'\}$ with $v$ following $u$, $u$ colored, we set $g(v)=g(u)+f(e)$.
Then $(\delta g)(e)=g(v)-g(u)=f(e)$ for $e=uv\in E(B)\backslash\{e'\}$, and $(\delta g)(e')=g(v')+g(u')=f(e')$.

The vertex coloring $g:V(\Sigma)\to G$ then has $\delta g=f$ on the edges of $B$; by Lemma~\ref{lemma:specialbasis_2} the (extended) tension $\delta g$ coincides with $f$ on the whole of $E$. The procedure is now repeated for each unbalanced connected component and thus a coloring for the whole $V$ is found.
\end{proof}

	Using Theorem~\ref{cor:tension_coloring} and Theorem~\ref{theorem:kernel},  from which the number of proper $G$-colorings is given by $P_{\Sigma}(G)=|G|^{k_b(\Sigma)}\left(\frac{|G|}{|2G|}\right)^{k_u(\Sigma)}p_\Sigma(G)$, we obtain from Corollary~\ref{thm:gcolors} an alternative derivation for the number of nowhere-zero $G$-potential differences to that obtained by deletion-contraction in Theorem~\ref{theorem:nz-tensions}. 

\begin{corollary}\label{cor:num_c_tens}
	Let $\Sigma$ be a signed graph and $G$ a finite additive abelian group.
Then the number of nowhere-zero $G$-potential differences is given by
	\begin{equation} \label{eq:num_nwz_tensions_from_colorings_2}
	p_\Sigma(G)=(-1)^{r(\Gamma)}|2G|^{k_u(\Sigma)}T_\Sigma\left(1-|G|,0,1-\frac{1}{|2G|}\right) \; .
	\end{equation}
\end{corollary}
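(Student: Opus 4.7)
The plan is to combine Theorem~\ref{theorem:kernel} with Theorem~\ref{cor:tension_coloring} to realize proper $G$-colorings as a uniform fibration over nowhere-zero $G$-potential differences via the difference operator $\delta$, and then to back out $p_\Sigma(G)$ from the closed form for $P_\Sigma(G)$ furnished by Corollary~\ref{thm:gcolors}.

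First I would observe that by the defining formula~\eqref{equation:delta} for $\delta$, a map $g\in G^V$ is a proper $G$-coloring of $\Sigma$ if and only if $\delta g$ is nowhere zero on $E$; this is just a rewriting of the proper-coloring condition $g(u)\neq \sigma(e)g(v)$ using the compatibility relation $\omega(u,e)\omega(v,e)=-\sigma(e)$. Theorem~\ref{cor:tension_coloring} identifies the image of $\delta$ with the set of $G$-potential differences of $\Sigma$, so restricting $\delta$ yields a surjection from proper $G$-colorings onto nowhere-zero $G$-potential differences. Since $\delta$ is a group homomorphism and properness of $g$ depends on $g$ only through $\delta g$, each fiber of this surjection has size exactly $|\ker\delta|$.

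Theorem~\ref{theorem:kernel} then supplies $|\ker\delta|=|G|^{k_b(\Sigma)}|G_2|^{k_u(\Sigma)}$, and $|G_2|=|G|/|2G|$ (as $G_2\cong G/2G$, noted at the start of Section~\ref{section:flows}), so
\[
  P_\Sigma(G)=|G|^{k_b(\Sigma)}\Big(\tfrac{|G|}{|2G|}\Big)^{k_u(\Sigma)}\,p_\Sigma(G).
\]
Solving for $p_\Sigma(G)$ and substituting the expression for $P_\Sigma(G)$ from Corollary~\ref{thm:gcolors}, the prefactor $|G|^{k(\Sigma)}/\bigl(|G|^{k_b(\Sigma)}(|G|/|2G|)^{k_u(\Sigma)}\bigr)$ collapses to $|2G|^{k_u(\Sigma)}$ via $k(\Sigma)=k_b(\Sigma)+k_u(\Sigma)$, while the sign $(-1)^{|V|-k(\Sigma)}$ becomes $(-1)^{r(\Gamma)}$ since $k(\Sigma)=k(\Gamma)$ and $r(\Gamma)=|V|-k(\Gamma)$. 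This yields precisely the claimed identity~\eqref{eq:num_nwz_tensions_from_colorings_2}.

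No step constitutes a real obstacle: the whole derivation is the assembly of results already in hand. The only point demanding a moment's care is that adding an element of $\ker\delta$ to a proper $G$-coloring yields another proper $G$-coloring (ensuring the fibers of $\delta$ restricted to proper colorings all have size $|\ker\delta|$), but this is immediate because $\delta$ is additive and properness is a condition on $\delta g$ alone.
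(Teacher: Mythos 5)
Your argument is correct and is exactly the route the paper takes: the paper derives this corollary by combining Theorem~\ref{cor:tension_coloring} and Theorem~\ref{theorem:kernel} to get $P_\Sigma(G)=|G|^{k_b(\Sigma)}\bigl(\tfrac{|G|}{|2G|}\bigr)^{k_u(\Sigma)}p_\Sigma(G)$ and then substitutes the formula for $P_\Sigma(G)$ from Corollary~\ref{thm:gcolors}. Your additional remarks on the fibration over $\ker\delta$ just make explicit what the paper leaves implicit.
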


\section*{Acknowledgements}
We thank Sergei Chmutov for alerting us to~\cite{FZ} and \cite{Chm}, Cl\'ement Dupont for elucidating to us the relevant parts of~\cite{dupont18}, Steve Noble for pointing us to~\cite{WK04}, Lex Schrijver for leading us to Theorem~\ref{theorem:Xcolorings}, Thomas Zaslavsky for comments on a preprint version of this paper, and the two anonymous referees for their comments on the submitted manuscript.

\appendix
\section*{Appendices}
\renewcommand{\thesubsection}{\Alph{subsection}}



\section{{The trivariate Tutte polynomial of a pair of matroids on a common ground~set}
}\label{app:matroids}
     \setcounter{theorem}{0}
     \renewcommand{\thelemma}{\Alph{section}\arabic{theorem}}


\begin{definition}\label{definition:extendedstp}
For matroids $M_1=(E,r_1)$ and $M_2=(E,r_2)$ on a common ground set~$E$, 
The {\em trivariate Tutte polynomial} of the ordered pair $(M_1,M_2)$ is defined by
\begin{equation}\label{equation:sm1m2_2}
S_{M_1,M_2}(X,Y,Z) := \sum_{A\subseteq E}(X-1)^{r_1(E)-r_1(A)}(Y-1)^{|A|-r_2(A)}(Z-1)^{r_2(A)+r_1(E)-r_1(A)}.
\end{equation}
\end{definition}

\noindent
Given a signed graph $\Sigma=(\Gamma = (V,E),\sigma)$, let $M(\Gamma)=(E,r_M)$ be the cycle matroid of $\Gamma$ with rank function $r_M$, and let $F(\Sigma)=(E,r_F)$ be the frame matroid of $\Sigma$ with rank function $r_F$.
Since $r_F(A)-r_M(A)=k_u(\Sigma\backslash A^c)\geq 0$ for each $A\subseteq E$, we see from the subset expansion~\eqref{equation:sm1m2_2} that the polynomial $S_{M(\Gamma), F(\Sigma)}$ is divisible by $(Z-1)^{r_M(E)}$.
The trivariate Tutte polynomial of a signed graph $\Sigma$ with underlying graph $\Gamma=(V,E)$, as defined in Definition~\ref{defn:signedtutte}, is given by
\begin{equation}\label{def_tutte_from_mat_2}
T_{\Sigma}(X,Y,Z) = (Z-1)^{-r_M(E)}S_{M(\Gamma), F(\Sigma)}(X,Y,Z).
\end{equation}

In order to establish properties of  $S_{M(\Gamma), F(\Sigma)}(X,Y,Z)$ we shall use an auxiliary four-variable polynomial invariant of matroid pairs first introduced by Welsh and Kayibi. 
\begin{definition}[\cite{WK04}]
Let $M_1=(E,r_1)$ and $M_2=(E,r_2)$ be two matroids on the same ground set. The {\em linking polynomial} of the matroid pair $(M_1,M_2)$ is defined by
\begin{equation}\label{eq:J}Q_{M_1,M_2}(x,y,u,v)=\sum_{A\subseteq E}(x-1)^{r_1(E)-r_1(A)}(y-1)^{|A|-r_1(A)}(u-1)^{r_2(E)-r_2(A)}(v-1)^{|A|-r_2(A)}.\end{equation}
\end{definition}

\noindent
The linking polynomial of a matroid pair $(M_1,M_2)$ is in fact a specialization of the trivariate Tutte polynomial as there are just three independent parameters in the exponents -- size, rank in $M_1$, and rank in $M_2$:
\begin{equation}\label{eq:Q_from_T}
Q_{M_1,M_2}(x,y,u,v)=(u-\!1)^{r_2(E)}(y-\!1)^{-r_1(E)}S_{M_1,M_2}\Big(1+(x-\!1)(u-\!1),1+(y-\!1)(v-\!1), 1+\frac{y-\!1}{u-\!1}\Big).
\end{equation}

The trivariate Tutte polynomial in the form of the four-variable linking polynomial is evidently the canonical Tutte polynomial of matroid pairs in the sense of~\cite{dupont18}: in the language of that paper, the universal norm of a pair of matroids on a common ground set is $$(M_1,M_2)\mapsto x^{r_1(E)-r_1(A)}y^{|A|-r_1(A)}u^{r_2(E)-r_2(A)}v^{|A|-r_2(A)}$$
and the universal Tutte character (in eight variables) boils down by homogeneity to the four-variable linking polynomial.   

By setting one of the variables $x,y,u,v$ to be constant in equation~\eqref{eq:Q_from_T} and relabelling variables we obtain the trivariate Tutte polynomial as a specialization of the linking polynomial. For example, setting in turn $x,y,u$ and $v$ to equal $2$ we obtain:
$$S_{M_1,M_2}(X,Y,Z) =  :\sum_{A\subseteq E}(X-1)^{r_1(E)-r_1(A)}(Y-1)^{|A|-r_2(A)}(Z-1)^{r_2(A)-r_1(A)} \qquad\qquad\qquad$$
\begin{align*}& = (X-1)^{r_1(E)-r_2(E)}(Z-1)^{r_1(E)}Q_{M_1,M_2}\left(2,1+(X\!-\!1)(Z\!-\!1),X,1+\frac{Y-1}{(X\!-\!1)(Z\!-\!1)}\right)\\
& = (Z\!-\!1)^{r_2(E)}Q_{M_1,M_2}\left(1+(X\!-\!1)(Z\!-\!1),2,\frac{Z}{Z-1},Y\right)\\
& = (Z\!-\!1)^{r_1(E)} Q_{M_1,M_2}\Big(X,Z,2,1+\frac{Y-1}{Z-1}\Big)\\
&= (Y\!-\!1)^{r_1(E)-r_2(E)} (Z\!-\!1)^{r_2(E)}Q_{M_1,M_2}\left(1+\frac{(X\!-\!1)(Z\!-\!1)}{Y-1}, Y, 1+\frac{Y-1}{Z-1},2\right)
 \end{align*}

\noindent Three easily verified properties of the linking polynomial yield the corresponding properties for the trivariate Tutte polynomial. 
 Firstly,
the Tutte polynomials of $M_1$ and of $M_2$ are the bivariate specializations
$$Q_{M_1,M_2}(x,y,2,2)=T_{M_1}(x,y),\quad Q_{M_1,M_2}(2,2,u,v)=T_{M_2}(u,v).$$
Secondly, swapping matroids gives
$$Q_{M_2,M_1}(x,y,u,v)=Q_{M_1,M_2}(u,v,x,y),$$
Lastly, taking dual matroids gives
$$Q_{M_1^*,M_2^*}(x,y,u,v)=Q_{M_1,M_2}(y,x,v,u).$$
(The dual of a matroid $M=(E,r)$ is the matroid $M^*=(E,r^*)$ with rank function
$r^*(A)=r(A^c)+|A|-r(E)$,
for $A \subseteq E$. The dual matroid is alternatively specified by its bases being the complements of the bases of $M$. 
The independent sets of $M^*$ are those sets $A$ such that there is a basis of $M$ contained in $A^c$. 
If $M=M(\Gamma)$ is the cycle matroid of a planar graph $\Gamma$, then $M^*$ is the cycle matroid of the (surface) dual planar graph $\Gamma^*$.) 



We may use these  properties of the linking polynomial to first obtain that
the trivariate Tutte polynomial $S_{M_1,M_2}(X,Y,Z)$ 
includes the Tutte polynomial of $M_1$ and the Tutte polynomial of $M_2$ as specializations:
\begin{equation}\label{eq:Tutte_M1}
T_{M_1}(X,Y)=(Y-1)^{-r_1(E)}S_{M_1,M_2}(X,Y,Y)
\end{equation}
and
\begin{equation}\label{eq:Tutte_M2}
T_{M_2}(X,Y)=(X-1)^{r_2(E)}S_{M_1,M_2}\left(X, Y,\frac{X}{X-1}\right),
\end{equation}
where the right-hand side of either identity is well-defined for $Y=1$ (respectively $X=1$) once expanded as a polynomial in $Y$ (respectively $X$).
The polynomial $S_{M_1,M_2}(X,Y,Z)$ thus includes among its evaluations enumerations of combinatorial objects associated with $M_1$ or with $M_2$ taken separately, 
such as the number of connected spanning sets of $M_1$ (evaluating at $(1,2,2)$).

Secondly, 
\vspace{0.2cm}

$S_{M_2,M_1}(X,Y,Z)=$
\begin{equation}\label{eq:swapM1M2}(X-1)^{r_2(E)}(Y-1)^{-r_1(E)}(Z-1)^{r_1(E)+r_2(E)}S_{M_1,M_2}\left(X,Y,1+\frac{Y-1}{(X-1)(Z-1)}\right).
\end{equation}

Lastly,
\vspace{0.2cm}

$S_{M_1^*, M_2^*}(X,Y,Z)=$
\begin{equation}\label{eq:dualityS}\hspace{1.25cm}(X-1)^{-r_1(E)}(Y-1)^{r_2(E)}(Z-1)^{|E|-r_1(E)-r_2(E)}S_{M_1,M_2}\left(Y,X,1\!+\frac{(X\!-1)(Z\!-1)}{Y-1}\right).\end{equation}
The combination of swapping matroids and taking duals gives
$$S_{M_2^*,M_1^*}(X,Y,Z) = (Z-1)^{|E|}S_{M_1,M_2}\left(Y,X,\frac{Z}{Z-1}\right).$$


As noted by Welsh and Kayibi~\cite{WK04}, when $M_1$ and $M_2$ form a matroid perspective $M_2 \rightarrow M_1$, which is to say that every circuit of $M_2$ is a union of circuits of~$M_1$, the linking polynomial of a pair of matroids $(M_1,M_2)$ on a common ground set, and hence the trivariate Tutte polynomial of $(M_1,M_2)$,  coincides with the Las Vergnas polynomial. 
Explicitly, if $M_2 \rightarrow M_1$ is a matroid perspective then
\begin{equation}\label{equation:lasvergnas}
S_{M_1,M_2}(X,Y,Z) = (Z-1)^{r_2(E)}T_{M_2\to M_1}\left(X,Y,\frac{1}{Z-1}\right),
\end{equation}
where $T_{M_2\to M_1}$ is the Tutte polynomial of $M_2 \rightarrow M_1$, as given in~\cite[Equation~(5.2)]{vergnas1999} and first defined by Las Vergnas in~\cite{vergnas75} (for perspectives of arbitrary length). Every $\Delta$-matroid $D$ (see~\cite{CMNR16} for a definition and extensive treatment of $\Delta$-matroids) gives rise to a matroid perspective $D_{\text{max}} \rightarrow D_{\text{min}}$ between the upper and lower matroid of $D$. Hence, equation (\ref{equation:lasvergnas}) also relates the polynomial $S_{M_1,M_2}$ to the Las Vergnas polynomial of $D$ (see Definition 6.1 in~\cite{CMNR16}). In turn, the Las Vergnas polynomial of the $\Delta$-matroid $D(G)$, where $G$ is an embedded graph, specializes to the Las Vergnas polynomial of $G$ as defined in~\cite{vergnas75}, since in that case $D(G)_{\text{min}}$ is the cycle matroid of $G$, and $D(G)_{\text{max}}$ is the bond matroid of the surface dual of $G$ (Corollary 5.4 in~\cite{CMNR16}).

%
%
\paragraph{Deletion-contraction and the Recipe Theorem for matroid pairs} \label{sec:del_cont_recipe_B}

A deletion-contraction recurrence formula for the trivariate Tutte polynomial $S_{M_1,M_2}$ follows directly from that given for the linking polynomial $Q_{M_1,M_2}$ in~\cite[Theorem 1]{WK04}. 

\begin{theorem}\label{theorem:delcontr}
Let $M_1$ and $M_2$ be matroids on a set $E$, and let $e \in E$.
Then the trivariate Tutte polynomial $S_{M_1, M_2}=S_{M_1, M_2}(X,Y,Z)$ satisfies the following recurrence:
\begin{align*}
&S_{M_1,M_2}=\\
&\begin{cases}
X(Z\!-\!1)\, S_{M_1/e,M_2/e} & \mbox{if $e$ is a coloop in both $M_1$ and $M_2$;}\\
Y\, S_{M_1\backslash  e,M_2\backslash  e} & \mbox{if $e$ is a loop in both $M_1$ and $M_2$;}\\
 Z\, S_{M_1\backslash  e,M_2\backslash  e} & \mbox{if $e$ is a loop in $M_1$ and a coloop in $M_2$;}\\
 (XZ-\!X-\!Z+\!Y)\, S_{M_1/e,M_2/e} & \mbox{if $e$ is a coloop in $M_1$ and a loop in $M_2$;}\\
(Z\!-\!1) \left[S_{M_1/e,M_2/e}\!+\!(X\!-\!1)\, S_{M_1\backslash  e,M_2\backslash  e}\right] & \mbox{if $e$ is a coloop in $M_1$ and ordinary in $M_2$;}\\
 (Y\!-\!1)\,S_{M_1/e,M_2/e}+ S_{M_1\backslash  e, M_2\backslash  e} & \mbox{if $e$ is ordinary in $M_1$ and a loop in $M_2$;}\\
(Z\!-\!1)\, S_{M_1/e,M_2/e}+ S_{M_1\backslash  e,M_2\backslash  e} &
\begin{cases}
 \mbox{if (i) $e$ is a loop in $M_1$ and ordinary in  $M_2$, or}\\
 \mbox{(ii)  $e$ is ordinary in $M_1$ and a coloop in $M_2$, or}\\
 \mbox{(iii) $e$ is ordinary in both $M_1$ and $M_2$.}
\end{cases}\\
%
%
%
\end{cases}
\end{align*}

\end{theorem}

\noindent 
The invariant 
\begin{equation}\label{eq:universal_matroidpair} \alpha^{r_1(E)}\beta^{|E|-r_2(E)}\gamma^{r_2(E)-r_1(E)}S_{M_1,M_2}(\frac{x}{\alpha},\frac{y}{\beta}, \frac{z}{\gamma}),\end{equation}
is easily checked from the subset expansion~\eqref{equation:sm1m2_2} for $S_{M_1,M_2}(X,Y,Z)$ to be a polynomial in $x,y,z,\alpha,\beta$ and $\gamma$  (by using $0\leq r(A)\leq r(E)$ for $A\subseteq E$ in a matroid $(M,r)$ on ground set $E$). 
By considering the effect of edge deletion and contraction on the prefactor of $S_{M_1,M_2}(\frac{x}{\alpha},\frac{y}{\beta}, \frac{z}{\gamma})$ in~\eqref{eq:universal_matroidpair} and multiplying the recurrence for $S_{M_1,M_2}(X,Y,Z)$ given in Theorem~\ref{theorem:delcontr} by this prefactor, and taking $X=\frac{x}{\alpha}, Y=\frac{y}{\beta}$ and $Z=\frac{z}{\gamma}$, 
 we find that this scaled version of the trivariate Tutte polynomial satisfies a recurrence giving a ``Recipe Theorem" for deletion-contraction invariants of matroid pairs. 
\begin{theorem}\label{theorem:recipe_matroidpairs_WK}
Let $R$ be an invariant of matroid pairs $(M_1,M_2)$, in which $M_1=(E,r_1)$ and $M_2=(E,r_2)$, which is multiplicative over direct sums and 
satisfies the following deletion-contraction recurrence:
\begin{align*}
&R_{M_1,M_2}=\\
&\begin{cases}
x\, R_{M_1/e,M_2/e} & \mbox{if $e$ is a coloop in both $M_1$ and $M_2$;}\\
y\, R_{M_1\backslash e,M_2\backslash  e} & \mbox{if $e$ is a loop in both $M_1$ and $M_2$;}\\
z\, R_{M_1\backslash  e,M_2\backslash  e} & \mbox{if $e$ is a loop in $M_1$ and a coloop in $M_2$;}\\
\left(\frac{\alpha(y-\beta)}{\gamma}+\frac{\beta(x-\alpha)}{\gamma}\right)\, R_{M_1/e,M_2/e} & \mbox{if $e$ is a coloop in $M_1$ and a loop in $M_2$;}\\
\alpha\, R_{M_1/e,M_2/e}\!+\frac{\beta(x\!-\!\alpha)}{\gamma}\, R_{M_1\backslash  e,M_2\backslash  e} & \mbox{if $e$ is a coloop in $M_1$ and ordinary in $M_2$;}\\
 \frac{\alpha(y\!-\!\beta)}{\gamma}\,R_{M_1/e,M_2/e}+ \beta R_{M_1\backslash  e, M_2\backslash  e} & \mbox{if $e$ is ordinary in $M_1$ and a loop in $M_2$;}\\
(z\!-\!\gamma)\, R_{M_1/e,M_2/e}+ \beta R_{M_1\backslash  e,M_2\backslash  e} & \mbox{if $e$ is a loop in $M_1$ and ordinary in  $M_2$,}\\
\alpha\, R_{M_1/e,M_2/e}+ \gamma R_{M_1\backslash  e,M_2\backslash  e} & \mbox{if $e$ is ordinary in $M_1$ and a coloop in $M_2$,}\\
\alpha\, R_{M_1/e,M_2/e}+ \beta R_{M_1\backslash  e,M_2\backslash  e} &  \mbox{if $e$ is ordinary in both $M_1$ and $M_2$,}\\
\end{cases}\end{align*}

and $R_{M_1,M_2}=1$ if $E=\emptyset$. 

Then 
$$R_{M_1,M_2}=\alpha^{r_1(E)}\beta^{|E|-r_2(E)}\gamma^{r_2(E)-r_1(E)}S_{M_1,M_2}(\frac{x}{\alpha}, \frac{y}{\beta}, \frac{z}{\gamma}).$$ 
\end{theorem}

\noindent
Theorem~\ref{theorem:recipe_matroidpairs_WK} can be derived from the ``Recipe Theorem"~\cite[Theorem 5]{WK04} for the four-variable linking polynomial $Q_{M_1,M_2}$; the parameters \begin{equation}\label{eq:WKpar}(m,n,A,B,C,D,a,b,c,d,p,q,z,t)\end{equation} in~\cite{WK04} 
are the parameters $$(\beta, \alpha,x, \frac{\alpha(y-\beta)}{\gamma}+\frac{\beta(x-\alpha)}{\gamma}, z,y,\beta,z-\gamma,\frac{\beta(x-\alpha)}{\gamma},\alpha,\beta,\frac{\alpha(y-\beta)}{\gamma},\gamma,\alpha)$$
in our notation. 
Subject to the condition $AD\neq BC$ among the parameters~\eqref{eq:WKpar}, Welsh and Kayibi further show that if an invariant of matroid pairs on a common ground set satisfies a general deletion-contraction formula according to the nine possible edge types, as itemized in~\cite[Equations~(1) to (13)]{WK04} and featuring the parameters~\eqref{eq:WKpar}, then the dependencies among the parameters~\eqref{eq:WKpar} are those seen among the coefficients in the recurrence of Theorem~\ref{theorem:recipe_matroidpairs_WK}. When $AD=BC$ among the parameters~\eqref{eq:WKpar} 
there is ``a multitude of very special cases"\cite{WK04}, in each case the invariant taking the form of a single monomial (see~\cite[Section 7.5]{WK04} for an example). 

Theorem~\ref{theorem:recipe2} (the ``Recipe Theorem" for  deletion-contraction invariants of signed graphs $\Sigma=(\Gamma,\sigma)$ up to  vertex-switching) may be obtained as a corollary of Theorem~\ref{theorem:recipe_matroidpairs_WK} upon taking $M_1=F(\Sigma)$, $M_2=M(\Gamma)$, discarding the edge type combinations of a loop in $F(\Sigma)$ and coloop or ordinary edge in $M(\Gamma)$ as these cannot occur for this particular choice of matroids, and verifying that the reduction of a bouquet of negative loops by deletion and contraction of edges leads to the  boundary condition for bouquets of negative loops given in the statement of Theorem~\ref{theorem:recipe2}.  (A negative loop is a loop in $M(\Gamma)$ that is ordinary in $F(\Sigma)$ if there is more than one negative loop in the bouquet and a coloop  in $F(\Sigma)$ if there is just one negative loop. Deletion and contraction of a negative loop is defined for signed graphs in Zaslavsky~\cite{zas82} and requires enlarging the domain of signed graphs to include half-arcs and free loops.)

\section{Proof of Lemma~\ref{lemma:specialbasis_2}}
\label{sec:app_fig_lemma}

In this appendix we prove Lemma~\ref{lemma:specialbasis_2}, which involves a detailed case analysis; after giving this proof, we then explain how the same result may be derived in the linear algebra framework of~\cite{chen09}.

\begin{proof}[Proof of Lemma~\ref{lemma:specialbasis_2}]
We begin by establishing uniqueness. A $G$-tension $f:E\to G$ of a signed graph $\Sigma=(\Gamma,\sigma)$ gives by restriction a $G$-valued function $f'=f_{|B}$ on the edges of a connected basis $B$ of $\Sigma$. Thus it suffices to prove that any function $f':B\to G$ can be uniquely extended to a $G$-tension~$f$ of~$\Sigma$. 
	Consider two tensions, $g$ and $h$, that coincide on $B$. Then $f=g-h$ is also a tension and is zero on $B$.
	For each $e \in E\backslash  B$, there is a unique circuit walk in $\Sigma$ consisting of the edges $B \cup \{e\}$ and the signature~$\sigma$ restricted to these edges. The edge $e$ is not a circuit path edge of this circuit walk as $B$ is a connected basis. Hence, given the values of $f$ on $B$, the tension condition in equation~\eqref{eq:def_ten_2} uniquely determines the value on $e$ of the extension of $f'$ to the tension $f$ of $\Sigma$, and we see that $f(e)=0$ as $f$ is identically zero on~$B$. Thus $f(e)=0$ for each $e\in E\backslash  B$. This finishes the proof of uniqueness.

	The procedure described above gives, for an arbitrary function $f':B\to G$ and edge $e\in E\backslash B$, a unique assignment $f(e)$ determined by a walk around the circuit contained in~$B\cup \{e\}$. It remains to see that this procedure gives a tension, regardless of the basis $B$ and the function $f'$. 
	
For this purpose, consider the connected graph~$T$ with the connected bases as vertices, two connected bases joined by an edge if there is an edge exchange transforming one into the other. 
Every circuit in a connected signed graph can be obtained as a subgraph after adding a unique edge to some connected basis. For a connected basis~$B'$, let $\mathcal{C}_{B'}$ be the set of circuit walks that arise by adding an edge to~$B'$.
By construction, $f:E\to G$ is defined in such a way that the equations~\eqref{eq:def_ten_2} for circuit walks from~$\mathcal{C}_{B}$ are satisfied. 
The equations~\eqref{eq:def_ten_2} given by a circuit walk in~$\mathcal{C}_{B'}$ for any connected basis $B'$ are all satisfied by $f$ provided the following claim holds:
\begin{quote}
\label{eq.claim}
If edges $e_1$ and $e_2$ are added to a generic connected basis $B$, 
and the value $f(e_i)$ is determined by equation~\eqref{eq:def_ten_2} for the unique circuit walk in~$B\cup\{e_i\}$, $i=1,2$, then equation~\eqref{eq:def_ten_2} is satisfied by $f:B\cup\{ e_1,e_2\}\to G$ for each circuit walk in~$B\cup\{e_1,e_2\}$.
\end{quote}
 Indeed, for each change of connected basis $B'=B\cup \{e_1\}\setminus \{e_2\}$, when extending $f':B\to G$ to $f:B\cup\{e_1\}\to G$ the value $f(e_1)$ determined by equation~\eqref{eq:def_ten_2} for a walk around the unique circuit in $B\cup\{e_1\}$ is such that, when extending $f':B'\to G$ to $f:B'\cup\{e_2\}\to G$, the value $f(e_2)$ determined by equation~\eqref{eq:def_ten_2} for a walk around the unique circuit in $B'\cup\{e_2\}$ coincides with the value $f'(e_2)$. Then, for each connected basis $B$, not only are the equations given by circuit walks in $\mathcal{C}_B$ satisfied, but also those in $\mathcal{C}_{B'}$. 

The above claim 
is established by a case analysis detailed in Figures~\ref{fig:t1af},\ref{fig:t1bf},\ref{fig:t1cf},\ref{fig:t1df},\ref{fig:t2bf} and associated Tables~\ref{tab:t1af},\ref{tab:t1bf},\ref{tab:t1cf},\ref{tab:t1df},\ref{tab:t2bf} to see that all the equations given by circuit walks in $B\cup\{e_1,e_2\}$ are determined by those given by the circuit walk in $B\cup \{e_1\}$ and the circuit walk in~$B\cup \{e_2\}$.
This finishes the proof of Lemma~\ref{lemma:specialbasis_2}.
%
%
%
%
%
%
%
\end{proof}

\paragraph{Explanation of the figures.}
In the tables and figures given below, the relevant cases occur when the signed graph is connected and unbalanced; however, also included are the balanced cases (remove an edge within one of the cycles that is forced to be negative to turn an unbalanced connected basis into a balanced connected basis; in balanced cases we may furthermore assume that all the signs of the edges are positive, and the coefficients $2$ in the given equations become $0$). The various cases of the case analysis are obtained by considering a generic tree with an additional edge closing a negative cycle (to make a connected basis of an unbalanced signed graph), adding an edge $e_1$, and then adding an additional edge $e_2$: for instance, Figure~\ref{fig:t1af} arises by considering that the first edge~$e_1$ closes another cycle, independent of the basis cycle, and then~$e_2$ is a chord of one of the cycles. All the figures of type~I are those in which the first edge closes a cycle independent of the basis cycle (that is, it is not a chord of the negative cycle of the given connected basis), while the figures of type~II are those in which the first edge is a chord of the basis cycle. Some of these figures can be obtained in different ways.

Leaves are not relevant in equations~\eqref{eq:def_ten_2}; therefore 
Figures \ref{fig:t1af},\ref{fig:t1bf}, \ref{fig:t1cf}, \ref{fig:t1df} and \ref{fig:t2bf} only give the different schematic structure of the circuits when two edges are added to a connected basis of an unbalanced signed graph (which contains a negative cycle). In order for Figures \ref{fig:t1af},\ref{fig:t1bf}, \ref{fig:t1cf}, \ref{fig:t1df} and \ref{fig:t2bf} to be exhaustive, some paths labelled $t_i$ may be empty.

The first two columns of a table indicate in which path labelled $t_j$ the edge $e_1$ or $e_2$ is located within the corresponding figure (symmetric cases have been omitted).
The heading ``$\text{eq}i$'' indicates the equation for a walk around the unique circuit contained in $B\cup \{e_i\}$. The $s_i$ indicates the sign of the path labelled $t_i$ (with $\pm$ in the column with heading $(s_1,\ldots,s_6)$ indicating that the path can be positive or negative), and $s_{ij\dots k}$ is short for the product $s_is_j\cdots s_k$. A path can only be empty if its sign is positive. 
In the column ``other cycles'' the additional circuits (containing both $e_1$ and $e_2$) are given in terms of the equation~\eqref{eq:def_ten_2} they define; the equality indicates how the equations for the circuit walks in $B\cup \{e_1\}$ and $B\cup \{e_2\}$ determine the equation given by each of these other circuit walks.
Absence of a horizontal separator in a given column indicates that the assumption stated in the cell for the column is the same for the whole horizontal block.

Each expression $t_i$ is related to an oriented arc in the figure. If the arc is oriented from the vertex $v_1$ to the vertex $v_k$ then it is associated to a walk $W=(v_1,e_1,v_2,\ldots,e_{k-1},v_k)$. The edges $e_1,\ldots,e_{k-1}$ are then oriented following the orientation of the arc in such a way that
\[
f(W)=t_i=\sum_{j=1}^{k-1} f(e_j)
\]
(notice that all the signs of this expression are positive).
In other words, the half-edge corresponding to $e_1$ attached to $v_1$ is always pointing from $v_1$ to $v_k$. The remaining half-edges (which have a natural ordering from $v_1$ to $v_k$) point in a direction consistent with the previous half-edge: 
\begin{itemize}
	\item If $h$ is the second half-edge of an edge, then $h$ points in the same direction as the first half-edge if the edge is positive, and in the opposite direction if the edge is negative.
	\item If $h$ is the first half-edge of an edge, then it points in the same direction as the second half-edge of the previous edge (thus, at each internal vertex of the path the two incident half-edges are aligned).
\end{itemize}
Therefore, if $W_1,W_2,\ldots,W_k$ are paths in which the terminal vertex of one path is the initial vertex of the next path, then the height of their concatenation is given by 
\[f(W_1*W_2*\cdots*W_k)=r_1t_1+\ldots+r_kt_k,\] where
\begin{align*}
r_{i}&=\prod_{j = 1}^{i-1} \sigma(W_j)=\prod_{j = 1}^{i-1} s_j \quad \text{if the direction of $W_i$ aligns with that of the concatenation,}\\
r_{i}&=-\prod_{j = 1}^{i} \sigma(W_j)=-\prod_{j = 1}^{i} s_j\qquad \text{if the direction is opposite.}
\end{align*} 

\paragraph{Connection with Chen and Wang~\cite{chen09}.}
The coefficients in front of the expression $t_i$  in Tables~\ref{tab:t1af}-\ref{tab:t2bf} are integers and give, up to a sign, the coefficients of the indicator vectors of the circuits in \cite{chen09} (denoted by $I_C$ in \cite[(3.2)]{chen09}). Then the equations given in \eqref{eq:def_ten_2} are those in \cite[(4.4)]{chen09}. By introducing cuts (\cite[Section~2]{chen09}) and bonds (following Zaslavsky's \cite{zas82,zas82a}), which are minimal cuts, they show that, as subspaces of $\mathbb{R}^{|E(\Sigma)|}$, the cut space is orthogonal to the circuit space~\cite[Lemma~3.3]{chen09}, and giving a dimensional argument they show that the $\mathbb{R}$-vector subspace of $\mathbb{R}^{|E(\Sigma)|}$ generated by the indicator vectors of the circuits is of dimension $|E|-(|V|-k_b(\Sigma))$ \cite[Theorem~3.5]{chen09}. A basis  is given by the indicator vectors of the circuit walks $C_i$ in $B\cup \{e_i\}$, $e_i\in E\setminus B$, where $B$ is a connected basis; the expression of any other circuit walk $C$ as a linear combination of the $\{C_i\}_{e_i\in E\setminus B}$ can be thus obtained using the coefficients of $C$ in the coordinates given by the edges in $E\setminus e_i$. This shows that the coefficients of the linear combination are integers (as, restricted to the coordinates given by $e_i\in E\setminus B$, these indicator vectors of the $C_i$ give a diagonal matrix with $\pm 1$ as diagonal entries), and therefore the equations~\eqref{eq:def_ten_2} for circuit walks can be deduced from those for the circuit walks $\{C_i\}_{e_i\in E\setminus B}$ alone. This gives a proof of Lemma~\ref{lemma:specialbasis_2}; alternatively, the proof of Lemma~\ref{lemma:specialbasis_2} given earlier yields a proof of the result on the dimension of the circuit space from~\cite{chen09} that does not require the introduction of bonds.

%


%
%

\begin{figure}[h!]
	\centering
	\begin{tikzpicture}[scale=0.48,decoration={
		markings,
		mark=at position 0.4 with {\arrow[line width=1.5mm]{to}}}]
	\tikzstyle{vertex}=[circle,fill=black!100,minimum size=7pt,inner sep=0pt]
	
	\tikzstyle{vertex2}=[circle,fill=black!100,minimum size=6pt,inner sep=0pt]
	%
	
	
	\clip (-10.5,-4) rectangle (12, 4);

	\node[vertex2] (v1) at (-6,-3) {};
	\node[vertex2] (v2) at (-6,3) {};
	\node[vertex2] (v3) at (0,0) {};
	\node[vertex2] (v4) at (6,0) {};
	
	
	
	\draw[line width=2pt,postaction={decorate}] (v2.center) to[out=180, in=180,looseness=2] node [midway,left] {$t_1$} (v1.center);
	
	\draw[line width=2pt,postaction={decorate}] (v2.center) to[out=270, in=90]  node [midway,left] (v212) {$t_2$} (v1.center);
	

	\draw[line width=2pt,postaction={decorate}] (v1.center) to[out=0, in=270]  node [midway,below] (v13) {$t_4$} (v3.center);

	\draw[line width=2pt,postaction={decorate}] (v3.center) to[out=90, in=0]  node [midway,above] (v32) {$t_3$} (v2.center);
	\draw[line width=2pt,postaction={decorate}] (v4.center) to[out=180, in=0]  node [midway,above] (v43) {$t_5$} (v3.center);
	\draw[line width=2pt,postaction={decorate}] (v4.south) to[out=300, in=60,looseness=70]  node [midway,right] (v44) {$t_6$} (v4.north);

	\end{tikzpicture}
	\caption{Figure type IA.} \label{fig:t1af}
\end{figure}
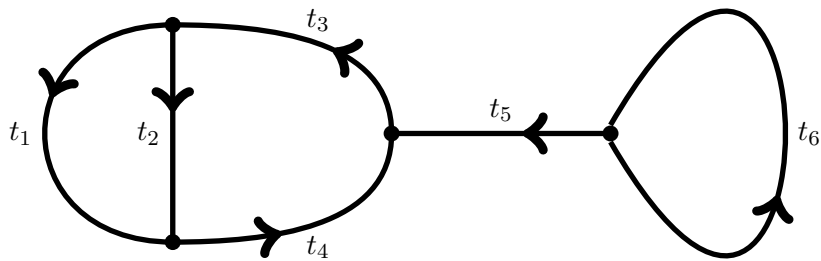

\begin{table} 
	\centering
	\small{
		\begin{tabular}{|@{}p{0.7cm}@{}|@{}p{0.9cm}@{}|@{}p{3cm}@{}|@{}p{3cm}@{}|@{}p{3.6cm}@{}|@{}p{4cm}@{}|}
			\hline
			$e_1\in$ & $e_2\in$& eq1 & eq2 & $(s_1,s_2,s_3,s_4,s_5,s_6)$& other cycles  \\
			\hline
			\multirow{8}{1.5em}{$t_6$} & \multirow{5}{1.5em}{$t_1$ \newline ($t_2$ sym.)} &\multirow{2}{3cm}{$t_6$}& $t_1-s_{12} t_2$&$(s_2,\pm,\pm,-s_{2 3},\pm,+)$ & NA \\
			\cline{4-6}
			&& & $t_1+s_1 t_4+s_{14}t_3$ &$(s_{34},\pm,\pm,-s_{23},\pm,+)$& NA\\
			\cline{3-6}
			
			& &  \multirow{2}{3cm}{$t_6+s_6 t_5+s_{65}t_3+s_{653}t_2+s_{6532}t_4-s_{653245}t_5$} 
			&$t_1-s_{12}t_2$
			&$(s_2,\pm,\pm,-s_{23},\pm,-)$
			& $t_6+s_{6}t_5+s_{65}t_3+s_{653}t_1+s_{6531}t_4-s_{653145}t_5=
			\text{eq1}+s_{653}\text{eq2}$\\
			\cline{4-6}
			
			&&&\multirow{2}{3cm}{$t_1+s_1 t_4+s_{14} t_3$}
			& \multirow{2}{1.5em}{$(s_{34},\pm,\pm,-s_{23},\pm,-)$} 
			& 
			$t_6+2s_6t_5-2s_{654}t_4-s_{6542}t_2+s_{6542}t_1=
			\text{eq2}+s_{6542}\text{eq1}$\\
			\cline{6-6}
			&&&& & 
			$t_6+2s_6t_5+2s_{65}t_3+s_{653}t_2-s_{65321}t_1=
			\text{eq2}-s_{65321}\text{eq1}$\\
			\cline{2-6}
			& \multirow{3}{10em}{$t_3$} 
			& $t_6$& \multirow{1}{10em}{$t_3+s_3t_1+s_{31}t_4$} 
			&$(\pm,-s_1,s_{14},\pm,\pm,+)$
			& NA \\
			\cline{3-6}
			&& \multirow{2}{3cm}{$t_6+2s_6 t_5-2s_{654}t_4-s_{6541}t_1+s_{6541}t_2$}
			& \multirow{2}{10em}{$t_3+s_3t_1+s_{31}t_4$} 
			&\multirow{2}{10em}{$(\pm,-s_1,s_{14},\pm,\pm,-)$}
			& $t_1-s_{12}t_2-2s_{123}t_3-2s_{1235}t_5+s_{1235}t_6=
			s_{1235}\text{eq1}-2s_{123}\text{eq2}$\\
			\cline{6-6} 
			&&&&& $t_3+s_3t_2+s_{32}t_4-2s_{3245}t_5+s_{3245}t_6=
			s_{3245}\text{eq1}+\text{eq2}$ \\
			\cline{1-6}
			
			\multirow{9}{1.5em}{$t_1$} & \multirow{3}{1em}{$t_2$} 
			& $t_1+s_1t_4+s_{14}t_3$ 
			& $t_2+s_2 t_4+s_{24} t_3$ 
			& $(s_{34},s_{34},\pm,\pm,\pm,-)$ 
			& $t_1-s_{12}t_2=\text{eq1}-s_{12}\text{eq2}$\\
			\cline{3-6}
			&
			&\multirow{2}{3cm}{$t_1+s_1t_4-2s_{145}t_5+s_{145}t_6+s_{14565}t_3$}
			&$t_2+s_2t_4-2s_{245}t_5+s_{245}t_6+s_{24565}t_3$ 
			& $(-s_{34},-s_{34},\pm,\pm,\pm,-)$ 
			& $t_1-s_{12}t_2=\text{eq1}-s_{12}\text{eq2}$\\
			\cline{4-6}
			&& 
			& $t_2+s_2t_4+s_{24}t_3$ 
			& $(-s_{34},s_{34},\pm,\pm,\pm,-)$
			& $t_1-s_{12}t_2-2s_{123}t_3-2s_{1235}t_5+s_{1235}t_6=
			\text{eq1}-s_{12}\text{eq2}$\\
			\cline{2-6}
			
			&\multirow{6}{2em}{$t_3$ \newline
				($t_4$ sym.)}
			& \multirow{2}{3cm}{$t_1-s_{12}t_2$} 
			& $t_3+s_{3}t_2+s_{32}t_4$ 
			&$(s_2,\pm,s_{34},\pm,\pm,-)$
			& $t_3+s_3t_1+s_{31}t_4=
			s_3\text{eq1}+\text{eq2}$\\
			\cline{4-6}
			&& & $t_3+s_3t_2+s_{32}t_4-2s_{3245}t_5+s_{3245}t_6$
			&$(s_2,\pm,-s_{24},\pm,\pm,-)$ &$t_3+s_3t_1+s_{31}t_4-2s_{3145}t_5+s_{3145}t_6=s_3\text{eq1}+\text{eq2}$ \\
			\cline{3-6}
			&& \multirow{4}{3cm}{
				$t_1+2s_1t_4-2s_{145}t_5+s_{145}t_6-s_{1456542}t_2$}
			& \multirow{2}{3cm}{$t_3+s_{3}t_2+s_{32}t_4$} 
			&
			\multirow{2}{3cm}{$(-s_2,\pm,s_{24},\pm,\pm,-)$}
			&	$t_3+s_3t_1+s_{31}t_4-2s_{3145}t_5+s_{3145}t_6=
			s_3\text{eq1}+\text{eq2}$\\
			\cline{6-6}
			&&&&& $t_1-s_{12}t_2-2s_{123}t_3-2s_{1235}t_5+s_{1235}t_6 = \text{eq1}-2s_{123} \text{eq2}$\\
			\cline{4-6}
			&& 
			& \multirow{2}{3cm}{$t_3+s_3t_2+s_{32}t_4-2s_{3245}t_5+s_{3245}t_6$}
			&\multirow{2}{3cm}{$(-s_2,\pm,-s_{24},\pm,\pm,-)$}
			
			&
			$t_3+s_3t_1+s_{31}t_4=s_3\text{eq1}+\text{eq2}$
			\\
			\cline{6-6}
			&&&&&$t_1-s_{12}t_2-2s_{123}t_3-2s_{1235}t_5+s_{1235}t_6=\text{eq1}-2s_{123}\text{eq2}$\\
			\cline{1-6}
			
	\end{tabular}}
	\caption{Type IA. In conjunction with Figure~\ref{fig:t1af}. $s_{ij}=s_{i}s_j$ is the sign of $t_it_j$.}\label{tab:t1af}
\end{table}


\begin{figure}[h!]
	\centering
	\begin{tikzpicture}[scale=0.48,decoration={
		markings,
		mark=at position 0.4 with {\arrow[line width=1.5mm]{to}}}]
	\tikzstyle{vertex}=[circle,fill=black!100,minimum size=7pt,inner sep=0pt]
	
	\tikzstyle{vertex2}=[circle,fill=black!100,minimum size=6pt,inner sep=0pt]
	%
	
	
	\clip (-10,-4) rectangle (10, 4);

	\node[vertex2] (v1) at (-5,-3) {};
	\node[vertex2] (v2) at (-5,3) {};
	\node[vertex2] (v3) at (5,-3) {};
	\node[vertex2] (v4) at (5,3) {};
	
	
	
	\draw[line width=2pt,postaction={decorate}] (v2.center) to[out=180, in=180,looseness=2] node [midway,left] {$t_1$} (v1.center);
	
	\draw[line width=2pt,postaction={decorate}] (v1.center) to[out=90, in=270]  node [midway,right] (v212) {$t_2$} (v2.center);
	

	\draw[line width=2pt,postaction={decorate}] (v3.center) to[out=180, in=0]  node [midway,below] (v13) {$t_4$} (v1.center);

	\draw[line width=2pt,postaction={decorate}] (v4.center) to[out=180, in=0]  node [midway,above] (v32) {$t_3$} (v2.center);
	\draw[line width=2pt,postaction={decorate}] (v3.center) to[out=90, in=270]  node [midway,left] (v43) {$t_5$} (v4.center);
	\draw[line width=2pt,postaction={decorate}] (v4.center) to[out=0, in=0,looseness=2]  node [midway,right] (v44) {$t_6$} (v3.center);

	\end{tikzpicture}
	
	\caption{Figure type IB.} \label{fig:t1bf}
\end{figure}

\begin{table} 
	\centering
	\small{
		\begin{tabular}{|@{}p{1cm}@{}|@{}p{1cm}@{}|@{}p{3cm}@{}|@{}p{3cm}@{}|@{}p{4cm}@{}|@{}p{4cm}@{}|}
			\hline
			$e_1\in$ & $e_2\in$& eq1 & eq2 & $(s_1,s_2,s_3,s_4,s_5,s_6)$& other cycles  \\
			\hline
			$t_3$\newline ($t_4$ sym.) & \multirow{5}{1.5em}{$t_1$ \newline ($t_2$ sym.)}  &\multirow{3}{3cm}{$t_3-s_{32}t_2-s_{324}t_4+s_{324}t_5$}& \multirow{2}{3cm}{$t_1-s_{14}t_4+s_{14}t_5+s_{145}t_6+s_{1456}t_4+s_{14564}t_2$}
			& \multirow{2}{3cm}{$(-s_2,\pm,\pm,s_{235},\pm,-s_5)$}
			& $t_1+s_1t_2-s_{123}t_3+s_{123}t_6+s_{1236}t_5+s_{12365}t_3=\text{eq2}+2s_{3}\text{eq1}$ \\
			\cline{6-6}
			&&&&  & $t_1-s_{14}t_4-s_{146}t_6+s_{146}t_3=\text{eq2}+s_{146}\text{eq1}$\\
			\cline{4-6}
			
			& & 
			&$t_1+s_{1}t_2$
			
			& \multirow{2}{1.5em}{$(s_2,\pm,\pm,s_{235},\pm,-s_5)$}
			
			& $t_1-s_{14}t_4+s_{14}t_5+s_{145}t_3=s_{145}\text{eq1}+\text{eq2}$\\
			\cline{3-6}
			
			& &\multirow{2}{3cm}{$t_3-s_{32}t_2-s_{324}t_4-s_{3246}t_6$}& \multirow{2}{3cm}{$t_1-s_{14}t_4+s_{14}t_5+s_{145}t_6+s_{1456}t_4+s_{14564}t_2$}
			& \multirow{2}{3cm}{$(-s_2,\pm,\pm,s_{236},\pm,-s_5)$}
			& $t_1+s_1t_2-s_{123}t_3+s_{123}t_6+s_{1236}t_5+s_{12365}t_3=\text{eq2}+2s_{3}\text{eq1}$ \\
			\cline{6-6}
			&&&&  & $t_1-s_{14}t_4+s_{14}t_5+s_{145}t_3=\text{eq2}+s_{145}\text{eq1}$\\
			\cline{1-6}
			
			\multirow{8}{1.5em}{$t_1$}&\multirow{3}{4cm}{$t_2$}
			&\multirow{3}{3cm}{$t_1-s_{14}t_4+s_{14}t_5+s_{145}t_3$}& \multirow{2}{3cm}{$t_2-s_{23}t_3+s_{23}t_6+s_{236}t_4$} 
			&\multirow{2}{4cm}{$(-s_2,\pm,\pm,s_{236},-s_6,\pm)$}
			& $t_1+s_1t_2-s_{123}t_3+s_{123}t_6+s_{1236}t_5+s_{12365}t_3=
			\text{eq1}+s_1\text{eq2}$\\
			\cline{6-6}
			&&&& & 
			$t_1-s_{14}t_4+s_{14}t_5+s_{145}t_6+s_{1456}t_4+s_{14564}t_2=\text{eq1}+s_{14564}\text{eq2}$\\
			\cline{4-6}
			&
			&  
			& $t_2-s_{23}t_3-s_{235}t_5+s_{235}t_4$
			&\multirow{1}{4cm}{$(s_2,\pm,\pm,s_{235},-s_6,\pm)$}
			& $t_1+s_1t_2=\text{eq1}+s_1\text{eq2}$\\
			\cline{2-6}
			&\multirow{5}{1.5em}{$t_5$}
			& \multirow{3}{3cm}{$t_1-s_{14}t_4-s_{146}t_6+s_{146}t_3$}
			&$t_5+s_{5}t_6$
			&\multirow{1}{4cm}{$(\pm,-s_{346},\pm,s_{136},s_6,\pm)$}
			&$t_1-s_{14}t_4+s_{14}t_5+s_{145}t_3=\text{eq1}+s_{14}\text{eq2}$\\
			\cline{4-6}
			& 
			& 
			&\multirow{2}{3cm}{$t_5+s_5t_3-s_{532}t_2-s_{5324}t_4$}
			&
			\multirow{2}{4cm}{$(-s_2,\pm,\pm,-s_{236},-s_6,\pm)$} 
			&$t_2+s_2t_1-s_{214}t_4+s_{214}t_5+s_{2145}t_6+s_{21456}t_4=
			s_2\text{eq1}+s_{214}\text{eq2}$ \\
			\cline{6-6}
			&&&&  & $t_1+s_1t_2-s_{123}t_3+s_{123}t_6+s_{1236}t_5+s_{12365}t_3
			=\text{eq1}+s_{1236}\text{eq2}$\\
			\cline{3-6}
			&
			& \multirow{2}{3cm}{$t_1+s_1 t_2$}
			&$t_5+s_5t_6$
			& \multirow{1}{4cm}{$(s_2,\pm,\pm,s_{236},s_6,\pm)$}
			& NA\\
			\cline{4-6}
			&
			& 
			&$t_5+s_5t_3-s_{532}t_2-s_{5324}t_4$
			&\multirow{1}{4cm}{$(s_2,\pm,\pm,-s_{236},-s_6,\pm)$}
			&$t_1-s_{14}t_4+s_{14}t_5+s_{145}t_3=\text{eq1}+s_{14}\text{eq2}$\\
			\cline{1-6}
			
	\end{tabular}}
	\caption{Type IB. In conjunction with Figure~\ref{fig:t1bf}. $s_{ij}=s_i s_j$ is the sign of $t_it_j$.} \label{tab:t1bf}
\end{table}

\begin{table} 
	\centering
	\small{
		\begin{tabular}{|@{}p{0.7cm}@{}|@{}p{0.7cm}@{}|@{}p{3cm}@{}|@{}p{3cm}@{}|@{}p{3.2cm}@{}|@{}p{4cm}@{}|}
			\hline
			$e_1\in$ & $e_2\in$&  eq1 & eq2 &$(s_1,s_2,s_3,s_4,s_5,s_6)$& other cycles  \\
			\hline
			\multirow{8}{1em}{$t_1$} 
			& \multirow{4}{1.5em}{$t_3$} 
			&\multirow{2}{3cm}{$t_1$}& $t_3+s_{3}t_4-s_{345}t_5+s_{345}t_6+s_{3456}t_4$
			& \multirow{1}{3cm}{$(+,\pm,\pm,-s_{3},\pm,-)$}
			& NA \\
			\cline{4-6}
			& &
			& \multirow{1}{3cm}{$t_3+s_3t_4$}
			&\multirow{1}{3cm}{$(+,\pm,\pm,s_{3},\pm,-)$}& NA \\
			\cline{3-6}
			& &\multirow{2}{3cm}{$t_1-2s_{12}t_2+2s_{12}t_4-2s_{1245}t_5+s_{1245}t_6$}& \multirow{1}{3cm}{$t_3+s_3t_4$}
			&\multirow{1}{3cm}{$(-,\pm,\pm,s_{3},\pm,-)$}
			& $t_1-2s_{12}t_2-2s_{123}t_3-2s_{1235}t_5+s_{1235}t_6=\text{eq1}
			-2s_{123}\text{eq2}$ \\
			\cline{4-6}
			& &
			& $t_3+s_3t_4-2s_{345}t_5+s_{345}t_6$
			&\multirow{1}{3cm}{$(-,\pm,\pm,-s_{3},\pm,-)$}
			& $t_1-2s_{12}t_2+s_{12}t_4+s_{124}t_3=\text{eq1}
			+s_{124}\text{eq2}$ \\
			\cline{2-6}
			&\multirow{5}{1.5em}{$t_6$} &\multirow{2}{3cm}{$t_1$}& 
			$t_6$
			&$(+,\pm,\pm,-s_{3},\pm,+)$
			& 
			NA\\
			\cline{4-6}
			& &
			& $t_6+2s_6t_5+s_{65}t_3+s_{653}t_4$
			&$(+,\pm,\pm,-s_{3},\pm,-)$
			& NA\\
			\cline{3-6}	
			& &\multirow{3}{3cm}{$t_1-2s_{12}t_2+s_{12}t_4+s_{124}t_3$}
			& $t_6$
			&$(-,\pm,\pm,-s_{3},\pm,+)$
			& 
			NA\\
			\cline{4-6}
			& &
			&$t_6+2s_6t_5+s_{65}t_3+s_{653}t_4$
			&
			\multirow{2}{3cm}{$(-,\pm,\pm,-s_{3},\pm,-)$} 
			& $t_1-2s_{12}t_2-2s_{123}t_3-2s_{1235}t_5+s_{1235}t_6=\text{eq1}+s_{1235}\text{eq2}$\\
			\cline{6-6}
			&&&&&$t_1-2s_{12}t_2+2s_{12}t_4-2s_{1245}t_5+s_{1245}t_6=\text{eq1}+s_{1245}\text{eq2}$\\
			\cline{1-6}	
	\end{tabular}}
	\caption{Type IC. In conjunction with Figure~\ref{fig:t1cf}. $s_i$ is the sign of $t_i$. $s_{ij}$ is the sign of $t_it_j$. The case $e_1\in t_3$ and $e_2\in t_6$ is symmetric to the $e_1\in t_1$, $e_2\in t_3$.} \label{tab:t1cf}
\end{table}


\begin{figure}[h!]
	\centering
	\begin{tikzpicture}[scale=0.48,decoration={
		markings,
		mark=at position 0.4 with {\arrow[line width=1.5mm]{to}}}]
	\tikzstyle{vertex}=[circle,fill=black!100,minimum size=7pt,inner sep=0pt]
	
	\tikzstyle{vertex2}=[circle,fill=black!100,minimum size=6pt,inner sep=0pt]
	%
	
	
	\clip (-10,-2.5) rectangle (10, 2.5);

	\node[vertex2] (v1) at (-6,0) {};
	\node[vertex2] (v2) at (-2,0) {};
	\node[vertex2] (v3) at (2,0) {};
	\node[vertex2] (v4) at (6,0) {};
	
	
	
	\draw[line width=2pt,postaction={decorate}] (v1.north) to[out=120, in=240,looseness=40] node [midway,left] {$t_1$} (v1.south);
	
	\draw[line width=2pt,postaction={decorate}] (v2.center) to[out=180, in=0]  node [midway,above] (v212) {$t_2$} (v1.center);
	

	\draw[line width=2pt,postaction={decorate}] (v2.center) to[out=270, in=270]  node [midway,below] (v13) {$t_4$} (v3.center);

	\draw[line width=2pt,postaction={decorate}] (v3.center) to[out=90, in=90]  node [midway,above] (v32) {$t_3$} (v2.center);
	\draw[line width=2pt,postaction={decorate}] (v4.center) to[out=180, in=0]  node [midway,above] (v43) {$t_5$} (v3.center);
	\draw[line width=2pt,postaction={decorate}] (v4.south) to[out=300, in=60,looseness=40]  node [midway,right] (v44) {$t_6$} (v4.north);

	\end{tikzpicture}
	
	\caption{Figure type IC.} \label{fig:t1cf}
\end{figure}
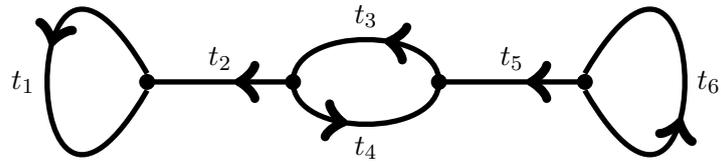

\begin{table}[ht!] 
	\centering
	\small{
		\begin{tabular}{|@{}p{0.7cm}@{}|@{}p{0.7cm}@{}|@{}p{3cm}@{}|@{}p{3cm}@{}|@{}p{3.2cm}@{}|@{}p{4cm}@{}|}
			\hline
			$e_1\in$ & $e_2\in$& eq1 & eq2& $(s_1,s_2,s_3,s_4,s_5,s_6)$ & other cycles  \\
			\hline
			\multirow{4}{1em}{$t_1$} & \multirow{4}{1.5em}{$t_6$} 
			&\multirow{2}{3cm}{$t_1$}& $t_6$
			& $(+,\pm,\pm,\pm,-,+)$
			& NA \\
			\cline{4-6}
			& &
			&$t_6+2s_{6}t_4+2s_{64}t_3+s_{643}t_5$ 
			&$(+,\pm,\pm,\pm,-,-)$
			& NA \\
			\cline{3-6}
			& &\multirow{2}{3cm}{$t_1+2s_1t_2+2s_{12}t_3+s_{123}t_5$}& 
			$t_6$
			&$(-,\pm,\pm,\pm,-,+)$
			& NA \\
			\cline{4-6}
			&
			&
			& $t_6+2s_{6}t_4+2s_{64}t_3+s_{643}t_5$ 
			&$(-,\pm,\pm,\pm,-,-)$ 
			& $t_1+2s_1t_2-2s_{124}t_4+s_{124}t_6=\text{eq1}+s_{124}\text{eq2}$ \\
			\cline{1-6}
	\end{tabular}}
	\caption{Type ID. In conjunction with Figure~\ref{fig:t1df}. $s_i$ is the sign of $t_i$. $s_{ij}$ is the sign of $t_it_j$. Other cases are symmetric.} \label{tab:t1df}
\end{table}


\begin{figure}[ht!]
	\centering
	\begin{tikzpicture}[scale=0.48,decoration={
		markings,
		mark=at position 0.4 with {\arrow[line width=1.5mm]{to}}}]
	\tikzstyle{vertex}=[circle,fill=black!100,minimum size=7pt,inner sep=0pt]
	
	\tikzstyle{vertex2}=[circle,fill=black!100,minimum size=6pt,inner sep=0pt]
	%
	
	
	\clip (-7,-2.5) rectangle (7, 6.5);

	\node[vertex2] (v1) at (-3,0) {};
	\node[vertex2] (v2) at (0,0) {};
	\node[vertex2] (v3) at (0,3) {};
	\node[vertex2] (v4) at (3,0) {};
	
	
	
	\draw[line width=2pt,postaction={decorate}] (v1.north) to[out=120, in=240,looseness=40] node [midway,left] {$t_5$} (v1.south);
	
	\draw[line width=2pt,postaction={decorate}] (v2.center) to[out=180, in=0]  node [midway,below] (v212) {$t_3$} (v1.center);
	

	\draw[line width=2pt,postaction={decorate}] (v3.center) to[out=270, in=90]  node [midway,right] (v13) {$t_2$} (v2.center);

	\draw[line width=2pt,postaction={decorate}] (v3.east) to[out=30, in=150,looseness=40]  node [midway,below] (v32) {$t_1$} (v3.west);
	\draw[line width=2pt,postaction={decorate}] (v4.center) to[out=180, in=0]  node [midway,below] (v43) {$t_5$} (v2.center);
	\draw[line width=2pt,postaction={decorate}] (v4.south) to[out=300, in=60,looseness=40]  node [midway,right] (v44) {$t_6$} (v4.north);

	\end{tikzpicture}
	
	\caption{Figure type ID.} \label{fig:t1df}
\end{figure}
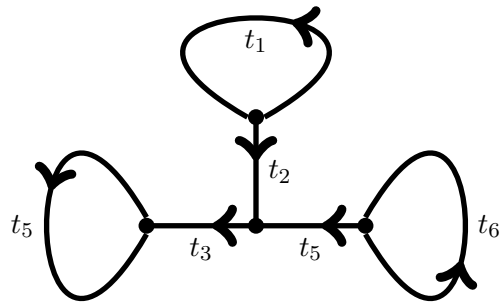

\begin{table}[ht!]
	\centering
	\small{
		\begin{tabular}{|@{}p{0.7cm}@{}|@{}p{0.7cm}@{}|@{}p{3cm}@{}|@{}p{3cm}@{}|@{}p{3.9cm}@{}|@{}p{4cm}@{}|}
			\hline
			$e_1\in$ & $e_2\in$& eq1 & eq2 & $(s_1,s_2,s_3,s_4,s_5,s_6)$& other cycles  \\
			\hline
			\multirow{8}{1em}{$t_1$} & \multirow{4}{1.5em}{$t_2$}  
			&\multirow{2}{3cm}{$t_1+s_1t_6-s_{163}t_3$}
			& $t_2-s_{24}t_4-s_{243}t_3$ 
			& $(s_{36},s_{34},\pm,\pm,-s_{46},+)$
			&
			$t_1+s_1t_6+s_{16}t_4-s_{1642}t_2=\text{eq1}-s_{1642}\text{eq2}$  \\
			\cline{4-6}
			&
			&
			&$t_2+s_2t_5+s_{25}t_6-s_{2563}t_3$
			&$(s_{36},s_{563},\pm,\pm,-s_{46},\pm)$ 
			& $t_1-s_{15}t_5-s_{152}t_2=\text{eq1}-s_{152}\text{eq2}$
			\\
			\cline{3-6}
			&
			&\multirow{2}{3cm}{$t_1-s_{15}t_5-s_{154}t_4-s_{1543}t_3$}
			&$t_2-s_{24}t_4-s_{243}t_3$
			&$(s_{345},s_{34},\pm,\pm,-s_{46},\pm)$ 
			& $t_1-s_{15}t_5-s_{152}t_2=\text{eq1}-s_{152}\text{eq2}$
			\\
			\cline{4-6}
			&
			&
			&$t_2+s_2t_5+s_{25}t_6-s_{2563}t_3$
			&$(s_{345},s_{563},\pm,\pm,-s_{46},\pm)$ 
			& $t_1+s_1t_6+s_{16}t_4-s_{1642}t_2=\text{eq1}-s_{1642}\text{eq2}$
			\\
			\cline{2-6}
			&\multirow{4}{1.5em}{$t_4$}
			&\multirow{2}{3cm}{$t_1-s_{15}t_5-s_{152}t_2$}
			&$t_4-s_{42}t_2-s_{423}t_3$
			&$(s_{25},\pm,\pm,s_{23},\pm,-s_{235})$  
			& $t_1-s_{15}t_5-s_{154}t_4-s_{1543}t_3=\text{eq1}-s_{154}\text{eq2}$
			\\
			\cline{4-6}
			&
			&
			& $t_4+s_4t_5+s_{45}t_6$
			&$(s_{25},-s_{356},\pm,s_{56},\pm,\pm)$ 
			& $t_1+s_1t_6+s_{16}t_4-s_{1642}t_2=\text{eq1}+s_{16}\text{eq2}$ \\
			\cline{3-6}
			& 
			&\multirow{2}{3cm}{$t_1+s_1t_6-s_{163}t_3$}
			&$t_4-s_{42}t_2+s_{42}t_3$
			&$(s_{36},\pm,\pm,s_{23},-s_{236},\pm)$
			& $t_1+s_1t_6+s_{16}t_4-s_{1642}t_2=\text{eq1}+s_{16}\text{eq2}$ \\
			\cline{4-6}
			&
			&
			&$t_4+s_4t_5+s_{45}t_6$
			&$(s_{36},-s_{356},\pm,s_{56},\pm,\pm)$ 
			&$t_1-s_{15}t_5-s_{154}t_4-s_{1543}t_3=\text{eq1}-s_{154}\text{eq2}$  \\
			\cline{1-6}
	\end{tabular}}
	\caption{Type IIB. In conjunction with Figure~\ref{fig:t2bf}. $s_{ij}$ is the sign of $t_it_j$.} \label{tab:t2bf}
\end{table}


\begin{figure}[ht!]
	\centering
	\begin{tikzpicture}[scale=0.48,decoration={
		markings,
		mark=at position 0.4 with {\arrow[line width=1.5mm]{to}}}]
	\tikzstyle{vertex}=[circle,fill=black!100,minimum size=7pt,inner sep=0pt]
	
	\tikzstyle{vertex2}=[circle,fill=black!100,minimum size=6pt,inner sep=0pt]
	%
	
	
	\clip (-4.,-4.5) rectangle (4., 3.5);

	\node[vertex2] (v1) at (-3,-2) {};
	\node[vertex2] (v2) at (3,-2) {};
	\node[vertex2] (v3) at (0,3) {};
	\node[vertex2] (v4) at (0,0) {};
	
	
	
	\draw[line width=2pt,postaction={decorate}] (v4.center) to[out=90, in=270,] node [midway,right] {$t_1$} (v3.center);
	
	\draw[line width=2pt,postaction={decorate}] (v4.center) to[out=330, in=140]  node [midway,below] (v212) {$t_2$} (v2.center);
	

	\draw[line width=2pt,postaction={decorate}] (v4.center) to[out=210, in=30]  node [midway,above] (v13) {$t_3$} (v1.center);

	\draw[line width=2pt,postaction={decorate}] (v1.center) to[out=300, in=240,looseness=1]  node [midway,below] (v32) {$t_4$} (v2.center);
	\draw[line width=2pt,postaction={decorate}] (v2.center) to[out=60, in=0]  node [midway,above] (v43) {$t_5$} (v3.center);
	\draw[line width=2pt,postaction={decorate}] (v3.center) to[out=180, in=120,looseness=1]  node [midway,left] (v44) {$t_6$} (v1.center);

	\end{tikzpicture}
	\caption{Figure type IIB.} \label{fig:t2bf}
\end{figure}

\section{Dictionary of polynomials}\label{app:dictionary}


Table~\ref{tab.1} below gives details of the graph polynomials that feature or are mentioned in this paper, and Table~\ref{fig:M4} describes some relations between them.
\small{
\begin{table}[htp]
	\begin{tabular}{|p{3.8cm}|p{9cm} |@{}c@{}|}
	\hline 
	Name & Expression & Reference\\
	\hline
	Tutte poly. graph $\Gamma$ \newline $T_{\Gamma}(X,Y)$ & \(\displaystyle \sum_{A \subseteq E}(X-1)^{k(\Gamma\backslash A^c)-k(\Gamma)}(Y-1)^{|A|-|V|+k(\Gamma\backslash A^c)}\)& \eqref{eq:Tutte_subset}\\
	\hline
	Tutte poly. \newline matroid~$M$\newline $T_{M}(X,Y)$ & \(\displaystyle \sum_{A \subseteq E}(X-1)^{r(E)-r(A)}(Y-1)^{|A|-r(A)}\)& \eqref{eq:tutte_matroid}\\
	\hline
	trivar. Tutte poly.  \newline 
	pair of matroids $(M_1,M_2)$\newline 
	$S_{M_1,M_2}(X,Y,Z)$ &
	\smash[b]{
		\begin{tabular}[t]{c}
	 	\( \displaystyle
	 	 \sum_{A \subseteq E} (X-1)^{r_1(E)-r_1(A)}(Y-1)^{|A|-r_2(A)}\cdot\)
	 	 \\
	 	\(\displaystyle 
	 	\qquad\cdot (Z-1)^{r_2(A)+r_1(E)-r_1(A)} \)
	 \end{tabular}}
	 &\eqref{equation:sm1m2_2}\\
	 \hline
	 linking poly.\newline 
	 pair of matroids $(M_1,M_2)$\newline
	 $Q_{M_1,M_2}(x,y,u,v)$ &
	 \smash[b]{
	 	\begin{tabular}[t]{c}
	 \(\displaystyle \sum_{A\subseteq E} (x-1)^{r_1(E)-r_1(A)}(y-1)^{|A|-r_1(A)}\cdot\)
	 \\
	 \(\displaystyle
	 \qquad \cdot (u-1)^{r_2(E)-r_2(A)}
	 (v-1)^{|A|-r_2(A)} \)
	 \end{tabular}}
	  &\smash[b]{
	  	\begin{tabular}[t]{l} App.~\ref{app:matroids}\\
	  \eqref{eq:J}\\
	\end{tabular}
}\\
	 \hline
	 
	 Las Vergnas poly.\newline
	 mat. persp. $M_2\to M_1$  \newline
	 $T_{M_2\to M_1}(X,Y,Z)$
	 &\smash[b]{
	 	\begin{tabular}[t]{l}
	 \(\displaystyle
 \sum_{A \subset E}  (X-1)^{r_{1}(E)-r_{1}(A)} (Y-1)^{|A|-r_{2}(A)} \cdot\)
 \\
	 \(\displaystyle \qquad  \cdot Z^{r_{2}(E)-r_{1}(E)-(r_{2}(A)-r_{1}(A))} \) \end{tabular}}
	 	& \cite[(5.2)]{vergnas1999}\\
	 	\hline
	 	trivar. Tutte  \newline
	 	signed graph $\Sigma$ \newline
	 		$T_{\Sigma}(X,Y,Z)$
	 	&
	 	\smash[b]{
	 		\begin{tabular}[t]{l}
	 		\(\displaystyle	
	 \sum_{A \subseteq E}(X-1)^{k(\Sigma\backslash A^c)-k(\Sigma)}(Y-1)^{|A|-|V|+k_{b}(\Sigma\backslash A^c)}\cdot\)
	        \\
	  \(\displaystyle
	  \qquad \cdot (Z-1)^{k_{u}(\Sigma\backslash A^c)} \)
  \end{tabular}
  }
	 	& \eqref{eq:signed_Tutte_subset} \\
	 	\hline
	 	flow polynomial \newline
	 	signed graph $\Sigma$\newline
	 	finite abelian group $G$\newline
	 	$q_\Sigma(G)$ 
	 	&
	 	\smash[b]{
	 		\begin{tabular}[t]{c}
	\( \displaystyle \sum_{A \subseteq E}(-1)^{|A^c|}|G|^{|A|-|V|+k_b(\Sigma\backslash A^c)}\left( |G|/|2G|\right)^{k_u(\Sigma\backslash A^c)}\)
	\end{tabular}}
	 	& \eqref{equation:flowcountsigned}\\
	 	\hline
	 	chromatic poly. \newline
	 		 signed graph $\Sigma$\newline
	 	$\{0,\pm1,\ldots,\pm n\}$\newline
	 	$\chi_\Sigma(2n\!+\!1)$ &
	 \( \displaystyle (-1)^{|V|-k(\Gamma)}(2n\!+\!1)^{k(\Gamma)}T_{\Sigma}\left(-2n,0,2n/(2n+1)\right)\) &\eqref{equation:chromatic_poly} \\
	 \hline
	 nz pot. diff. \newline
	 signed graph  $\Sigma$ \newline
	 finite abelian group $G$\newline 
	 $p_\Sigma(G)$ 
	 &
	 \(\displaystyle (-1)^{r(\Gamma)}|2G|^{k_u(\Sigma)}T_{\Sigma}\Big(1-|G|,0,1-1/|2G|\Big)\)
	 &
\eqref{equation:potential_differences}\\
\hline
surface Tutte poly. \newline
map $M$ \newline
$\T(M;\x,\y)$
&
\(\displaystyle \sum_{A \subseteq E}x^{n^*(M/A)}y^{n(M\backslash A^c)} \prod_{\substack{\mathrm{conn. }\text{ }\mathrm{cpts}\\ M_i \text{ }\mathrm{of}\text{ } M/A}}x_{\bar{g}(M_i)} \prod_{\substack{\mathrm{conn. }\text{ }\mathrm{cpts}\\ M_j \text{ }\mathrm{of}\text{ } M\backslash A^c}}y_{\bar{g}(M_j)}\)
& \cite[(4)]{goodall2020tutte}\\
\hline
Krieger--O'Connor poly. \newline
signed graph $\Sigma$ \newline
$C_\Sigma(x,y,z)$
&
\smash[b]{
	\begin{tabular}[t]{l}
\(\displaystyle 
\sum_{A\subset E}(x-1)^{k_b(\Sigma\setminus A^c)-k_b(\Sigma)} (y-1)^{|A|-|V|+k_b(\Sigma\setminus A^c)}\cdot\)\\
\(\displaystyle\qquad \cdot (z-1)^{k_u(\Sigma\setminus A^c)}\)
\end{tabular}} &\cite{kriocon13} \\
\hline
\end{tabular}
\caption{Dictionary of polynomials.}\label{tab.1}
	\end{table}
}

\noindent
Notation:
\begin{itemize}
	\item $k(\cdot)$: number of connected components
	\item $r_{\cdot}(\cdot)$:  matroid rank; $r_i(A)$, rank of the set $A$ in matroid $M_i$
		\item $V$: vertex set of graph or signed graph
\item $E$: edge set of graph or signed graph, ground set of matroid 
	\item $A^c$, complement of $A$ in $E$,  or $A^c=E\setminus  A$

\item $\Sigma\setminus A^c$: spanning graph $(V(\Sigma),A)$.
	\item $k_u(A)$/$k_b(A)$: number of unbalanced/balanced connected components of signed graph $(V(\Sigma),A)$
	\item $\mathbf{x}=(x;\ldots,x_{-(i+1)},x_{-i},\ldots,x_{-1},x_0,x_1,\ldots,x_i,x_{i+1},\ldots)$, and similarly $\mathbf{y}$,  infinite vector of variables.
	\item $M\backslash A^c$: map obtained from $M$ by deleting edges not in $A$  (see~\cite{goodall2017tutte} or \cite[Definition~A.1]{goodall2020tutte} for edge deletion in maps); the surface in which $M\backslash A^c$ is embedded may differ from that of $M$. 
	\item $M/A$:  map obtained from $M$ by contracting edges in $A$ (see~\cite[Definition A.3]{goodall2020tutte} for edge contraction in maps -- notably, contracting a loop may create an extra vertex); the surface in which $M/A$ is embedded may differ from that of $M$).
	\item $\bar{g}(M_i)$:  signed genus of connected component $M_i$ of given submap of $M$, equal to the number of handles if $M_i$ orientably embedded, and minus the number of cross-caps if $M_i$ is non-orientably embedded.
	
	\item $n(M\setminus A^c)=e(M\setminus A^c)-v(M\setminus A^c)+k(M\setminus A^c)=|A|-|V(M)|+k(M\setminus A^c)$
	\item $n^{\ast}(M/ A)=f(M/A)-v(M/A)+k(M/A)$ where $f(M/A)$ is the number of faces of the map $M/A$, and $v(M/A)$ is the number of vertices of the map $M/A$.
\end{itemize}

\small{
	\begin{table}[ht]
	\begin{tabular}{|@{}p{3.2cm}@{}|@{}p{3.2cm}@{}|@{}c@{}|@{}c@{}|}
		\hline 
		Name & Name & Relation & Ref.\\
		\hline
		trivar. Tutte poly.\newline pair of matroids \newline $(M_1,M_2)$\newline $S_{M_1,M_2}(X,Y,Z)$ & 
			 Las Vergnas poly.\newline
		matroid persp. \newline  $M_2\to M_1$  \newline
		$T_{M_2,M_1}(X,Y,Z)$
		&
\(\displaystyle		S_{M_1,M_2}(X,Y,Z) = (Z-1)^{r_2(E)}T_{M_2,M_1}\left(X,Y,1/(Z-1)\right)\)
		& \eqref{equation:lasvergnas}\\
		\hline
		trivar. Tutte poly.\newline 
		signed graph $\Sigma$ \newline
		$T_\Sigma (X,Y,Z)$
		&
		trivar. Tutte poly.\newline
		pair of matroids $(M_1,M_2)$\newline
		$S_{M_1,M_2}(X,Y,Z)$
		&
		\(\displaystyle
	T_{\Sigma}(X,Y,Z) = (Z-1)^{-r_M(E)}S_{M(\Gamma), F(\Sigma)}(X,Y,Z).
		\)
		& 	\smash[b]{\begin{tabular}[t]{l}
				\eqref{def_tutte_from_mat_2} \\
		\eqref{def_tutte_from_mat} \\
	\end{tabular}}
\\
		\hline
		 linking poly.\newline 
	pair of matroids \newline $(M_1,M_2)$\newline
	$Q_{M_1,M_2}(x,y,u,v)$ &
	trivar. Tutte\newline 
		pair of matroids $(M_1,M_2)$\newline
	$S_{M_1,M_2}(x,y,u,v)$ &
			\smash[b]{\begin{tabular}[t]{l}
					\(\displaystyle Q_{M_1,M_2}(x,y,u,v)=(u-1)^{r_2(E)-r_1(E)}\cdot \)\\
					\(\displaystyle \:\cdot S_{M_1,M_2} \Big(1+(x\!-\!1)(u\!-\!1),1+(y\!-\!1)(v\!-\!1), 1+\frac{y\!-\!1}{u\!-\!1}\Big)\)
					\end{tabular}}
& \eqref{eq:Q_from_T}\\
		\hline
			 	flow polynomial \newline
		signed graph $\Sigma$\newline
		fin. ab. group $G$\newline
		$q_\Sigma(G)$ 
		&
				trivar. Tutte poly.\newline 
		signed graph $\Sigma$ \newline
		$T_\Sigma (X,Y,Z)$
		&
		\smash[b]{\begin{tabular}[t]{@{}c@{}}
		\(\displaystyle q_\Sigma(G) = (-1)^{|E|-|V|+k(\Gamma)}T_{\Sigma}\left(0,1-|G|,1-|G|/|2G|\right) \)
		\end{tabular}}
		& \eqref{eq.flow-eval-3var}\\
		\hline 
			 trivar. Tutte  poly.\newline
		signed graph $\Sigma$ \newline
		$T_{\Sigma}(X,Y,Z)$
		&
		surface Tutte poly.\newline
		map $M$ \newline
		$\T(M;\x,\y)$
		&
		 
\smash[b]{\begin{tabular}[t]{@{}c@{}}\(\displaystyle T_{\Sigma}(X,Y,Z)=(X-1)^{-k(M)}\T(M;\x,\y)\) \\
$\Sigma$ underlying the map $M$
\end{tabular}}
 
		&
	\eqref{eq.signed_eval_surf}\\
	\hline

	Krieger--O'Connor poly. \newline
	signed graph $\Sigma$ \newline
	$C_\Sigma(x,y,z)$
	&
		trivar. Tutte poly. \newline
	signed graph $\Sigma$ \newline
	$T_{\Sigma}(X,Y,Z)$
	&
	\(\displaystyle C_\Sigma (x,y,z)= (z-1)^{k_u(\Sigma)}T_{\Sigma}\left(x,y,1+(z-1)/(x-1)\right) \)
	&
	Tab~\ref{tab.1}\\
	\hline
	
	\end{tabular}
\caption{Relations between some of the polynomials in Table~\ref{tab.1}.}\label{fig:M4}
\end{table}
}

 \bibliographystyle{abbrv} 
 \bibliography{biblio}

\end{document}